\documentclass{gtpart}
\usepackage{amsmath, amsthm, amssymb, amsfonts, mathtools, wasysym}
\usepackage{color}
\usepackage{verbatim}
\usepackage[mathcal]{euscript}
\usepackage{hyperref}
\usepackage[letterpaper,centering,text={5.5in,8.5in}]{geometry}
\usepackage[all]{xy}
\usepackage{graphicx}

\newtheorem{dummy}{anything}[section]
\newtheorem{theorem}[dummy]{Theorem}
\newtheorem{lemma}[dummy]{Lemma}
\newtheorem{proposition}[dummy]{Proposition}
\newtheorem{corollary}[dummy]{Corollary}
\newtheorem{definition}[dummy]{Definition}

\newtheorem{remark}[dummy]{Remark}

\newcommand{\f}[1]{\mathbb{#1}}

\newcommand{\K}{\mathbb{K}}
\newcommand{\Af}{\mathbb{A}}
\newcommand{\PP}{\mathbb{P}}

\newcommand{\Gm}{\mathbb{G}_m}
\newcommand{\g}{\mathfrak{g}}
\newcommand{\h}{\mathfrak{h}}
\newcommand{\fraksl}{\mathfrak{sl}}
\newcommand{\frakgl}{\mathfrak{gl}}

\newcommand{\calL}{\mathcal{L}}
\newcommand{\cT}{\mathcal{T}}
\newcommand{\cO}{\mathcal{O}}

\DeclareMathOperator{\Span}{span}
\DeclareMathOperator{\Div}{div}
\DeclareMathOperator{\Vect}{Vect}
\DeclareMathOperator{\End}{End}
\DeclareMathOperator{\rank}{rk}

\title{Floer cohomology of $\mathfrak{g}$-equivariant Lagrangian branes}
\author{Yank{\i} Lekili}
\address{YL: King's College London, Department of Mathematics, Strand, London WC2R 2LS, UK}
\author{James Pascaleff}
\address{JP: University of Texas at Austin, Current address: Department of Mathematics, University
of Illinois at Urbana-Champaign, 1409 W. Green St., Urbana, IL 61801, USA}
\date{}
\begin{document}

\begin{abstract}

Building on Seidel-Solomon's fundamental work \cite{seidelsol}, we define the
notion of a $\mathfrak{g}$-equivariant Lagrangian brane in an exact symplectic
manifold $M$ where $\mathfrak{g} \subset SH^1(M)$ is a sub-Lie algebra of the
symplectic cohomology of $M$. When $M$ is a (symplectic) mirror to an
(algebraic) homogeneous space $G/P$, homological mirror symmetry predicts that
there is an embedding of $\mathfrak{g}$ in $SH^1(M)$. This allows us to study a
mirror theory to classical constructions of Borel-Weil and Bott. We give explicit
computations recovering all finite dimensional irreducible representations
of $\mathfrak{sl}_2$ as representations on the Floer cohomology of an
$\mathfrak{sl}_2$-equivariant Lagrangian brane and discuss generalizations to
arbitrary finite-dimensional semisimple Lie algebras.

\end{abstract}

\keyword{equivariant Lagrangian branes, symplectic cohomology, homological mirror symmetry, semisimple Lie algebras}

\subject{primary}{msc2010}{53D37} 
\subject{primary}{msc2010}{53D40}
\subject{secondary}{msc2010}{17B99}

\maketitle

\section{Introduction}

In this paper, we are concerned with ``hidden'' symmetries on the Floer
cohomology of Lagrangian submanifolds on a symplectic manifold $X$ resulting
from an algebraic Lie group action on the mirror dual variety $X^{\vee}$. Our
work builds on and extends the work of Seidel and Solomon \cite{seidelsol} who
studied dilating $\f{C}^*$-actions on $X^{\vee}$ and interpreted these actions as symmetries on the Floer cohomology in the mirror dual $X$.

 The abstract story could be described more generally whenever $X^{\vee}$ has
an action of a semisimple Lie algebra $\g$, however for concreteness, we will
work in the setting of projective homogeneous spaces $X^{\vee}= G/P$ where $G$
is a semisimple Lie group (over $\f{C}$) and $P$ is a parabolic subgroup.
Mirror symmetry has been studied extensively in this setting. $X^{\vee}$ is
always a Fano variety. The expected A-model mirror dual to $X^{\vee}$ is a
Landau-Ginzburg model (LG-model) $W: \mathcal{R} \to \f{C}$, where $\mathcal{R}$ is an affine
variety and $W$ is a holomorphic function called the superpotential.

In the case where $G=SL_n(\f{C})$, a mirror dual LG-model of $X^{\vee}$ was first proposed in \cite{EHX} and \cite{givental} as
a superpotential $W:(\f{C}^*)^N \to \f{C}$. However, even in \cite{EHX}, it was
noticed that there was a ``disease'' with this LG-model in general. For
example, in the case $X^{\vee}= Gr(2,4)$ (Grassmannian of 2-planes in
$\f{C}^4$), one did not have the expected isomorphism \[ Jac(W) \simeq
QH^*(X^{\vee}) \] as the proposed superpotential $W: (\f{C}^*)^3 \to \f{C}$ has
only 4 critical points as opposed to $6 = \rank QH^*(X^{\vee})$. Eguchi-Hori-Xiong
suggested that to cure this disease, one has to partially compactify
$(\f{C^*})^3$. This partial compactification problem in general and the problem
of constructing an $LG$-model dual to $X^{\vee} = G/P$ for $G$ of any type was
solved by Rietsch \cite{rietsch} and the expected isomorphism of the Jacobian
ring of $W$ and $QH^*(X^{\vee})$ was obtained through an understanding of quantum cohomology established in an unpublished work
of Dale Peterson. Rietsch constructed an LG-model: \[ W : \mathcal{R} \to \f{C}
\] on an open (projected) Richardson variety $\mathcal{R}$ sitting inside the Langlands dual homogeneous variety $G^{L}/P^{L}$. This open Richardson variety is obtained as the projection from $G^{L}/B^{L}$ to $G^{L}/P^{L}$ of the intersection of two opposite Schubert cells, it is smooth and irreducible, and its complement is an anti-canonical divisor (Lemma 5.4 of \cite{KLS}).

In the case under consideration, one direction of Kontsevich's homological
mirror symmetry conjecture \cite{kontsevich} (see \cite{kontsevichENS} for
the extension to Fano varieties) states that: \begin{equation} \label{HMS} D^b
	Coh(G/P) \stackrel{?}{\simeq} D^{\pi} \mathcal{F} (\mathcal{R},W)
\end{equation} where the left hand side stands for the derived category of
coherent sheaves on the homogeneous variety $X^{\vee}=G/P$ and the right hand
side stands for the split-closed derived Fukaya category of the holomorphic
fibration $W$. Strictly speaking, a rigorous definition of the latter has only
been given in the case where $W$ has isolated non-degenerate critical points
(\cite{thebook}). This condition is equivalent to the condition that small
quantum cohomology of $G/P$ be generically semisimple. It is known that this is
the case for full flag varieties $G/B$ (\cite{kostant}), and Grassmannians
(\cite{gepner}, \cite{siebtian}). However, a counter-example in the general
case can also be found in \cite{CMP}. 

In fact, Rietsch's construction is symmetric. Namely, the Landau-Ginzburg
mirror to the homogeneous variety $X= G^{L}/P^{L}$ is an open Richardson
variety $\mathcal{R}^{\vee}$ sitting inside $X^{\vee}= G/P$ together with a
superpotential $W^{\vee} : \mathcal{R}^{\vee} \to \f{C}$. Therefore, the
expected mirror symmetry relationship can be summarized as follows (cf. \cite{aurouxanti}): 
\begin{align*} 
\mathcal{R} &\leftrightarrow \mathcal{R}^{\vee} \\
(\mathcal{R},W) &\leftrightarrow G/P \\
G^{L}/P^{L}    &\leftrightarrow (\mathcal{R}^{\vee}, W^{\vee})
\end{align*} 
where each side of the double-arrows can be considered either as an A-model or a B-model. 

On the more classical side of the story, let us recall that if $\lambda$ is a
dominant integral weight for the adjoint action of a maximal torus $T$ on $G$,
Bott-Borel-Weil theory constructs an \emph{equivariant} vector bundle
$V_\lambda$ over $G/P$ such that the space of sections $H^0(V_\lambda)$ is
isomorphic to the irreducible highest weight representation of $G$ with
highest weight $\lambda$ (\cite{bott}) . For example, in the case of
$SL_2(\f{C})$, a dominant weight is specified simply by a non-negative integer
$n \geq 0$.  Correspondingly, we have the line bundles $\mathcal{O}(n) \to
\f{C}P^1 = SL_2(\f{C}) /B$ , where $B$ is the Borel subgroup of upper-triangular
matrices in $SL_2(\f{C})$. The representations $H^0(\mathcal{O}(n))$
geometrically realize all the irreducible representations
$\text{Sym}^n(\f{C}^2)^*$ of $SL_2(\f{C})$.

If one only wishes to understand the representations of the Lie algebra $\g = \text{Lie} \  G$, then an alternative is to study the restrictions of the vector bundles
$V_\lambda$ to $\mathcal{R}^{\vee} \subset G/P$. By linearizing the $G$-action, one obtains that the space of sections, $H^0(\mathcal{R}^{\vee},V_\lambda)$, form infinite dimensional representations of $\g$, which contain, as a subspace, the finite-dimensional irreducible representation of $\g$ given by those sections that extend to $G/P$. (In
the case $G$ is simply-connected, this is all one needs in order to build all
the finite dimensional irreducible representations of $G$). Note that
$V_\lambda$ for different $\lambda$ may become isomorphic as holomorphic vector
bundles upon restriction to $\mathcal{R}^{\vee}$. However, they still would be
distinguished by their equivariant structures.  Under mirror symmetry, the process of restriction to $\mathcal{R}^\vee$ corresponds to forgetting the superpotential on $\mathcal{R}$, meaning that we consider wrapped Floer theory in $\mathcal{R}$ as developed in \cite{wrapped}.

In this paper, we will study the mirror theory to Bott-Borel-Weil theory for
$\mathcal{R}^{\vee}$ that we translate to the symplectic side $\mathcal{R}$
as inspired by the conjecture \eqref{HMS}. One of our main contributions is the
definition of a $\g$-equivariant Lagrangian where $\g \subset
SH^1(\mathcal{R})$ is a sub-Lie algebra of the symplectic cohomology of
$\mathcal{R}$ (see Definition \ref{equivariant}). 

To elaborate on this, recall that by the Hochschild-Kostant-Rosenberg theorem,
one has that: \[ HH^{*+\bullet} (G/P) \simeq H^*( \Lambda^{\bullet} (\mathcal{T}_{G/P} ))
\] where $\mathcal{T}_{G/P}$ is the tangent sheaf to $G/P$. Now, the
linearization of the action of $G$ on $G/P$ yields a map: \[ \g \to \text{Vect} (G/P)
\] which is a Lie algebra embedding since we assume that $\g$ is simple (This holds more generally whenever $G$ acts effectively on $G/P$).

Therefore, $\g$ sits inside $HH^1(G/P)$ as a sub-Lie algebra. Since Hochschild
cohomology is a derived invariant, the homological mirror symmetry conjecture
\eqref{HMS} predicts that \[ \g \subset SH^1(\mathcal{R}) \] as a sub-Lie
algebra. In Section \ref{sec:sh-vf}, we verify this prediction in the case
$G=SL_n(\f{C})$ and $\mathcal{R}= (\f{C}^*)^n$ by an explicit calculation.

The theory that we develop in Section \ref{theory} allows us to define the notion of a
$\g$-equivariant Lagrangian brane in $\mathcal{R}$ when $\g \subset
SH^1(\mathcal{R})$. The data of an equivariant structure on a Lagrangian $L$, consists of a $\f{K}$-linear map $c_L : \g \to CW^0(L,L)$ satisfying certain properties (see Definition \ref{equivariant}). For a $\g$-equivariant Lagrangian brane $L \subset \mathcal{R}$,  we use the closed-open string map to construct a
representation: \[ \rho : \g \to HW^*(L,L) \] where the latter is the wrapped
Floer cohomology of $L$. More generally, one can construct representations of
$\g$ on the wrapped Floer cohomology of a pair of $\g$-equivariant Lagrangians $(K,L)$. 

A key feature of the theory is the representations obtained this way on
$HW^*(K,L)$ depend crucially on the perturbation datum used to define various
chain level operations (cf. \cite{seidelsphere}) and the choice of equivariant
structures $c_L$ and $c_K$. In fact, the dependence on the choice of
perturbations and the equivariant structures are interrelated. In the case of
$\g= \mathfrak{sl}_2, \mathcal{R} = \f{C}^* $, we exploit this dependence in
two ways: 
\begin{enumerate}
\item In Section \ref{sec:single-lag} we fix a perturbation scheme and
consider two copies of the Lagrangian $L = \f{R}_+ \subset \f{C}^*$, one of
which is equipped with a trivial equivariant structure $c_L=0$ and the other is
equipped with a non-trivial equivariant structure $c_L$. We then construct all
the finite dimensional irreducible representations of $\mathfrak{sl}_2(\f{C})$
on a subspace of $HW^*(L,L)$. 
\item In Section \ref{sec:twists-of-L}, we take two
geometrically distinct Lagrangians that are isomorphic to $L$ in the wrapped
Fukaya category. One is the standard $L=\f{R}_+ \subset \f{C}^*$ and the other
one is $L(n)$ which is obtained from $L$ by applying $n$ times right-handed
Dehn twist to $L$ (see Figure \ref{deform}), and equip both of them with the
trivial equivariant cocycles. This amounts to picking different perturbations
in computing $HW^*(L,L)$ and by varying $n$, we again construct all the finite
dimensional irreducible representations of $\mathfrak{sl}_2(\f{C})$ on a
subspace of $HW^*(L,L)$. 
\end{enumerate}

Another interesting aspect of our theory is that the representations on
$HW^*(K,L)$ come equipped with ``canonical bases'' arising from intersections
of Lagrangian submanifolds $K$ and $L$. This is an additional piece of data
which is not apparent in Borel-Weil-Bott theory. In representation theory,
there are several bases that are called ``canonical": Lusztig's canonical bases
(\cite{lusztig1}, \cite{lusztig2}) and closely related Kashiwara's crystal
bases in quantum groups (\cite{kashiwara}), MV cycles of Mirkovic-Vilonen
\cite{mv}, etc. We will explore the relationship between our bases to these in
a future work. The relevance of canonical bases to homological mirror symmetry
has also been noticed in the work of Gross-Hacking-Keel  and
Goncharov-Shen \cite{GS}. 

\paragraph{Acknowledgments:} We would like to thank Konstanze Rietsch and
Cl\'elia Pech for helpful conversations on the paper \cite{rietsch}.  We would
also like to thank Nick Sheridan for sharing his ideas about closed-open string
maps and the $L_\infty$ structure on symplectic cohomology that proved
important in working out the results of Section 3. The notion of an equivariant
Lagrangian submanifold used in this paper relies on the work of Seidel-Solomon
\cite{seidelsol}, and we thank Paul Seidel for his help in formulating this
notion. We thank the anonymous referee for helpful comments that improved the exposition. YL was partially supported by a Royal Society Fellowship and by Marie
Curie grant EU-FP7-268389, and JP was partially supported by NSF Grant
DMS-1148490.

\section{Geometric preliminaries}

  We begin by recalling the definition of (finite-type, complete) Liouville
manifolds.  Let $(M^{cpt}, \omega = d\alpha )$ be a Liouville domain, that is,
a $2n$-dimensional compact exact symplectic manifold with boundary such that
the Liouville vector field dual to $\alpha$ points strictly outwards along
$\partial M^{cpt}$. The form $\alpha|_{\partial M^{cpt}}$ is then a contact
form. Let $R$ denote the Reeb vector field. We require that all Reeb orbits
are non-degenerate. This holds for a generic choice of $\alpha$.  Let $M$ be a
$2n$-dimensional (non-compact) symplectic manifold, obtained from the compact
domain by gluing the positive symplectization of the contact boundary: \[M = 
M^{cpt} \cup_{\partial M} [1,\infty) \times \partial M \] where, by abuse of
notation, we write $\partial M$ for $\partial M^{cpt}$.

The Liouville form $\lambda$ on the conical end is given by $\lambda = r
\alpha|_{\partial M} $ where $r$ is the coordinate in $[1, \infty)$.  We will
call $(M, d\lambda)$ constructed as above a Liouville manifold. 

On a Liouville manifold, we will consider exact properly embedded Lagrangian
submanifolds $L$ such that $\lambda$ vanishes on $L \cap (\partial M \times
[1,\infty))$. In the case $L$ is non-compact (by deforming $L$ by a Hamiltonian
isotopy if necessary) one can ensure that $L$ is of the form \[ L = L^{cpt}
\cup_{\partial L} \partial{L} \times [1,\infty) \] where $L^{cpt} \subset M^{cpt}$ and $\partial L$ is
shorthand for the Legendrian submanifold $\partial L^{cpt}$ in $\partial M$. 

In this paper, we will concern ourselves with two types of invariants of $M$.
The first one is called symplectic cohomology of $M$. This is a ``closed string
invariant'' and is a type of Hamiltonian Floer cohomology for a certain class of
Hamiltonian functions on $M$. The second one is the (wrapped) Fukaya category
of $M$ which is an ``open string invariant''. It involves (wrapped) Floer chain
complex associated with Lagrangians in $M$.

To fix an integer grading for the invariants that we will study, we impose the
following topological restrictions on $M$ and the submanifolds $L$: We will
assume that $2c_1(M)=0$ and we will fix a trivialization of the
$(\Lambda^{n}_\f{C} T^*M)^{\otimes 2}$; to define a grading for invariants
involving $L$,  we assume that the relative first Chern class $c_1(M,L) \in
H^2(M,L)$ vanishes; to define (wrapped) Fukaya category over $\f{Z}$ (rather
than $\f{Z}_2)$, we assume that all the Lagrangians that we consider are spin,
and we fix an orientation and a spin structure on $L$. 

We will henceforth assume that all these topological conditions are satisfied.
All our chain complexes will be defined over an arbitrary ring $\f{K}$, though
the one that we have in mind is principally $\f{C}$.  

\subsection{Open and closed invariants of Liouville manifolds}
\label{sec2-1}

In this section, we recall the definition of \emph{symplectic cohomology}
denoted by $SH^*(M)$, and the \emph{wrapped Fukaya category} denoted by
$\mathcal{W}(M)$. Our exposition is by no means complete. Rather, we take a
minimalistic approach to set-up the notation and refer to the literature for
more. We recommend the recent \cite{seidelsphere} for an up-to-date and
detailed account of the material summarized here.

\paragraph{Symplectic cohomology}
On Liouville manifolds we consider Hamiltonian functions $H \in
C^{\infty}(M,\f{R})$ which have linear growth outside of some compact subset of
$M$: \[ H(r, y) = m r + c \] for some $m > 0$ and $c$ are constants. We denote
the space of such functions by $\mathcal{H}(m) \subset C^{\infty}(M,\f{R})$. 

The Hamiltonian vector field (defined by the equation $-\iota_{X_H} \omega =
dH$) of such a function satisfies \[ X_H = m R \] where $R$ is the Reeb vector field.  Hence as $m$ increases
1-periodic orbits of $X_H$ correspond to longer and longer Reeb orbits where
$R$ is viewed as a vector field defined on the entire conical end via the
product structure. We require that $m$ is not equal to the period of any
Reeb orbit so that there are no $1$-periodic orbits of $H$ outside of a compact
subset of $M$. (Note that by our genericity assumptions, the Reeb vector field has
a discrete period spectrum). 

We are now ready to recall the definition of symplectic cohomology. We choose
$H \in \mathcal{H}(m)$ using a time-dependent perturbation $h : S^1 \times M
\to \f{R}$, a smooth non-negative function such that $|h|$ and
$|\lambda(X_{h})|$ are uniformly bounded.  Furthermore, we choose $h$
generically so that 1-periodic orbits of the Hamiltonian vector field $X_{H_t}$
of the function $H_t = H+ h_t$ are non-degenerate, where we write $h_t = h(t, \cdot)$. The $H_t$ perturbed action
functional on the free loop space $\mathcal{L}M$ is given by: \[ A_H(x) = -
\int x^* \lambda + \int H_t(x(t))dt \] The critical points of this functional
are $1$-periodic orbits of $X_{H_{t}}$. These give the generators of the
symplectic chain complex $SC^*(m)$ over $\f{K}$. More precisely, \[ SC^*(m) =
\bigoplus_{x \in Crit(A_H)} |o_x|_\f{K} \] where $|o_x|_\f{K}$ is the rank 1 $\f{K}$-module associated with the (real) orientation line (determinant
line) bundle (see Definition C.3 \cite{abouz} and Section 12e
of\cite{thebook}).  

The differential on $SC^*(m)$ can be defined formally via the solutions to the
negative gradient flow equation of the functional $A_H$. To be precise, one
uses the theory of $J$-holomorphic curves \'a la Floer. To ensure well-behaved
holomorphic curve theory, we will use compatible almost complex structures $J$
on $M$ which are of contact type outside of a compact subset.  This means that
\[ \lambda \circ J = dr \] It is well-known that the space of such $J$ is
contractible. The contact type condition ensures that $J$-holomorphic curves do
not escape to infinity by an application of maximum principle (\cite{biased}
pg. 10). Now, choose $S^1$-dependent family of compatible almost complex
structures $J_t$ on $M$ such that outside a compact subset of $M$, $J_t = J_0$
for all $t \in S^1$, and $J_0$ is of contact type at the conical end.

With this notation in place, the differential $d: SC^*(m) \to SC^{*+1}(m)$ is
obtained by counting finite energy solutions to the Floer's equation:
\begin{eqnarray*} u: \f{R} \times S^1 \to M \\ \partial_s u + J_t (\partial_t u
- X_{H_t}) = 0 \end{eqnarray*} Finite energy condition ensures that the limits
  $\lim_{s \to \pm \infty} u(s,\cdot)$ converges to 1-periodic orbits $x_{\pm}$
of $X_{H_t}$. 

We denote the cohomology of this chain complex by $SH^*(m)$, which is
independent of $H_t$ and $J_t$ up to canonical isomorphism. It is a finite
dimensional $\f{Z}$-graded $\f{K}$-module. Now, there are
continuation maps \[\kappa_{m}^{m'} : SC^*(m) \to SC^*(m')  \text{\  for \ } m' \geq m \] 

which are defined via an interpolation equation using $(H^s_t, J^s_t)$
depending on $s \in \f{R}$ interpolating between the perturbation datum used
for defining each group such that $\partial_s H^s_t \leq 0$. (see \cite{biased}
pg. 9 or \cite{seidelsphere} pg. 10 for a recent account). The continuation
maps are defined up to canonical isomorphism and they form a direct system,
hence one can define a chain complex via the homotopy direct limit: \[ SC^*(M) :=
\underset{m}{\text{hocolim\ }} SC^*(m) \] 
To obtain an explicit chain complex, one can adapt the model defined in \cite{wrapped}. \emph{Symplectic cohomology}, $SH^*(M)$, is the
cohomology of this chain complex.  Observe that as the colimit used in this construction is directed, one has (cf. Theorem 2.6.15 in \cite{weibel}): \[ SH^*(M) \simeq \varinjlim_m H^*(SC^*(m)) \] 
Symplectic cohomology is an algebra over the homology operad of framed little disks. We list here some operations which will
be relevant for us: \begin{itemize} \item (Product)  $\cup : SC^i(M) \otimes
SC^j(M) \to SC^{i+j}(M)$ \item (Gerstenhaber bracket)  $[ \  , \  ] : SC^i (M)
\otimes SC^j (M)  \to SC^{i+j-1} (M)$  \item (Batalin-Vilkovisky operator):  $
\Delta : SC^i(M) \to SC^{i-1} (M)$ \end{itemize}

These operations descend to $SH^*(M)$. On the cohomology level, the product is
associative and graded commutative, the latter means: \[ x \cup y = (-1)^{|x||y|} y \cup x \] The
Gerstenhaber bracket on $SH^*(M)$ satisfies $[x,y] = (-1)^{|x||y|} [y,x]$ and the graded Jacobi identity: \[
(-1)^{|x||z|} [[x,y],z]] + (-1)^{|y||x|} [[y,z],x]] +
(-1)^{|z||y|} [[z,x],y]] =0 \] In particular, note that $(SH^1(M), [ ,
])$ is a (honest) Lie algebra. 

On $SH^*(M)$, these three operations are related via the identity \footnote{We follow the sign conventions from \cite{seidelsphere}. This differs from the sign conventions in Getzler \cite{getzler}.}  : \[ [x,y] =
\Delta( x \cup y )  - \Delta(x) \cup y - (-1)^{|x|} x \cup \Delta(y)
\]
and the following Poisson (derivation) property holds:
\[ [ x, y \cup z] = [x,y] \cup z  + (-1)^{(|x|-1)|y|} y \cup [x,z] \]
We also note that there is a unital ring homomorphism: \[
H^*(M) \to SH^*(M) \] coming from the inclusion of constant orbits in
$M^{cpt}$ and $\Delta$ vanishes on the image of this inclusion.

\paragraph{Wrapped Fukaya category}

Symplectic cohomology has an open string analogue which is known as wrapped
Floer cohomology. The general construction of wrapped Fukaya category,
$\mathcal{W}(M)$, can be found in \cite{wrapped}. Here, we simply set-up the
notation.  We first recall the definition of wrapped Floer cohomology of two
exact Lagrangians $K$ and $L$. These Lagrangians need not be compact, however
if non-compact they are required to be conical at infinity as explained in the
beginning of this section.  

Choose a time dependent Hamiltonian $H_{K,L,t} : M \to \f{R}$, where the time
parameter $t$ is now in $[0,1]$. As in the closed case we require $H_{K, L,t}
\in \mathcal{H}(m)$, that is, outside of a compact subset in $M$, it is time
independent and grows linearly with some slope $m$.  The generators of (partially) wrapped
Floer complex $CF^*(K,L,m)$ is given by time-1 flow lines of the Hamiltonian
$H_{K,L}$. Concretely, these are chords $x: [0,1] \to M$ such that \[ x(0) \in K \  , \ x(1) \in L \text{
\ and \ } dx/dt = X_{H_{K,L,t}} (x) \]
We additionally require that these chords are non-degenerate (1 is not an eigenvalue of the linearization of the time 1-flow of $X_{H_{K,L,t}}$). This can be achieved by a generic (compactly supported) time dependent perturbation of $H_{K,L}$. The non-degeneracy ensures that $CF^*(K,L,m)$ is finitely generated $\f{K}$-vector space. Taking orientations into account, we write this complex as: 
\[ CF^*(K,L, m) = \bigoplus_{x} |o_x|_\f{K} \] where as before $|o_x|_\f{K}$ is the rank 1 $\f{K}$-module associated with the orientation line (determinant
line) bundle (see Section 12e of \cite{thebook}). 

The Floer differential is obtained by counts of isolated (modulo $\f{R}$
translation) finite energy maps $u: \f{R} \times [0,1] \to M$ that solve the
Floer's equation: \[ \partial_s u + J_t (\partial_t u - X_{H_{K,L,t}}) =0 \]
satisfying the boundary conditions $u( \cdot , 0 ) \in K$ and $u( \cdot,1 ) \in
L$. Here, $J_t$ is as before, a time-dependent, $\omega$-compatible complex structure on $M$
which is of contact type outside of a compact subset.

As in the closed case, one constructs canonical continuation maps: \[
CF^*(K,L,m ) \to CF^*(K,L,m') \text{ \ for \ } m' \geq m \] and defines the
wrapped Floer chain complex via the homotopy direct limit (\cite{wrapped}): \[ CW^*(K,L) :=
 \underset{m}{\text{hocolim\ }} CF^*(K,L,m) \]  The cohomology of this complex is called the
\emph{wrapped Floer cohomology} of the pair $(K,L)$.

Given a set of exact Lagrangians $L_0,\ldots, L_{d}$ in $M$, one constructs the
$A_\infty$ structure maps: \[  \mu^d : CW^*(L_{d-1}, L_d) \otimes \ldots,
\otimes CW^*(L_0,L_1) \to CW^*(L_0,L_d)[2-d] \]

by counting parametrized moduli spaces of solutions to a family of equations
analogous to Floer's equation defined on domains $\f{D} = \{ z \in \f{C} : |z|
\leq 1 \}$ with $d+1$ boundary punctures. The wrapped Fukaya category
$\mathcal{W}(M)$ has objects exact Lagrangians in $M$ (with conical ends if
non-compact) and the $A_\infty$ structure maps are given by $(\mu^d)_{d\geq 1}$
as alluded to above.  We warn the reader that the detailed construction of
these maps so as to obtain an $A_\infty$ category requires special attention
for the compatibility and consistency of perturbations used. These are well
documented in the literature to which we refer the interested reader: See
\cite{wrapped} for the first rigorous construction of wrapped Fukaya category,
\cite{abouz} for another construction and \cite{auroux} for a friendlier
discussion. Of course, all of these references build on the foundational work
in [\cite{thebook}, Section 9].

\section{Closed-Open string maps}
\label{theory}

We now recall the \emph{closed-open string map} (\cite{seideldef}) \[
\mathcal{CO}: SH^*(M) \to HH^*(\mathcal{W}(M)) \]  This is a map of (unital)
Gerstenhaber algebras. The general set-up for defining these maps is as in
Section 4 of \cite{seidelsphere}. Roughly speaking, one considers domains
$\f{D} = \{ z \in \f{C} : |z| \leq 1 \}$, with $i+1$ boundary punctures of
which $i$ are considered as inputs (and are ordered) and one interior puncture
which is considered as an input. The interior puncture additionally is equipped
with a distinguished tangent direction. One then counts isolated solutions (up
to reparametrization of the domain) of the corresponding Floer's equation such
that the interior inputs are labeled with elements of $SC^*(M)$ (the tangent
direction fixes the parametrization of the orbit) and the boundary
inputs/outputs are labeled with cochains from wrapped Floer complexes
associated with objects in $\mathcal{W}(M)$. The Floer's equation in question
is obtained by deforming the holomorphic map equation in the same way as in the
definition of $SH^*(M)$ near the interior punctures and otherwise one uses the
deformations as in the definition of $\mathcal{W}(M)$. (Note that the conformal
structure on the domain is allowed to vary.)

Properly setting up these chain level maps in a consistent manner combines two
sets of perturbations corresponding to closed and open invariants.  To spell
this out a little bit, note that one sets up the chain complex $SC^*(M)$ by
picking a class of perturbations $(H_t, J_t)$ for defining the chain complex
$SC^*(m)$ for each slope $m$ and additional data $(H^s_t, J^s_t)$ is chosen in
order to define the continuation maps $SC^*(m) \to SC^*(m+1)$ (\cite{biased},
\cite{ritter}). On the other hand, to set up the wrapped Fukaya category,
$\mathcal{W}(M)$, one chooses perturbation data for each $d-$tuple of objects
$(L_1,\dots, L_d)$ and again additional data is fixed to construct continuation
maps. The choices of perturbations are done in an inductive manner in order to
ensure consistency (\cite{wrapped}, [\cite{thebook},Section 9]).  The
consistency is required, for example, to ensure that $A_\infty$ relations hold.
The chain level operations defined by closed-open string maps combine these two
types of choices of perturbations. As a result, one has to verify the
consistency of the two sets of perturbations. This technicality is addressed in
a similar spirit to the arguments of [\cite{thebook}, Section 9]. However, in
the case where the complex structure on the domain depends on a non-compact
parameter space (which is the case for almost all the operations in this
paper), the relevant compactification of the domains (``real blow-up'' of the
Deligne-Mumford compactification from \cite{KSV}), goes slightly beyond the
case discussed in \emph{op. cit.} The way to extend this theory to this more
general case is discussed in \cite{seidelsol} and explained in more depth in
[\cite{seidelsphere}, Section 5] to which we refer the curious reader (see also
\cite{RS}).

In what follows, we fix a consistent perturbation data, which we denote by
$\mathcal{P}$ for all chain level maps and prove statements for this fixed
perturbation $\mathcal{P}$ (Note that there is a huge amount of data is
suppressed in this notation). The chain level maps always depend on the
perturbation. The dependence on perturbation often goes away when one considers
the induced maps at the cohomological level. Though, this not always the case
(crucially not so in Corollary \ref{linear} below) and we emphasize the
dependence on $\mathcal{P}$ with a subscript in such cases.

  At the chain level the $\mathcal{CO}$ map consists of an infinitude of
$\f{K}$-linear maps $\Phi^i$ indexed by the number boundary inputs. These
belong to the larger family of maps: \[ \Phi^i_j : SC^*(M)^{\otimes j} \to
hom_{\f{K}}(\mathcal{W}(M)^{\otimes i} , \mathcal{W}(M))\] where we have $j$
interior punctures as inputs and $\Phi^i_1=\Phi^i$. (One could also
think of $\Phi^i_0$ as $\mu^i$ in which case the domain is a copy of $\f{K}$ at each level of the $\f{Z}$-grading though we will not use this notation.) 

Let us now restrict the target to a subcategory consisting of two Lagrangians $K,L$. If we examine the chain level map restricted to $SC^1 (M)$, we notice the following components which will be relevant for our discussion:
\begin{eqnarray*}
\Phi_1^0 &\colon& SC^1(M) \to CW^1(K, K) \oplus CW^1(L,L) \\
\Phi_1^1 &\colon& SC^1(M) \to hom_{\f{K}} (CW^*(K,L), CW^*(K,L)) \\
\Phi_1^2 &\colon& SC^1(M) \to hom_{\f{K}} (CW^*(L,L) \otimes_{\f{K}} CW^*(K,L), CW^{*-1}(K,L)) \\ 
         &&    \hspace{1in}                   \oplus \ \ hom_{\f{K}} (CW^*(K,L) \otimes_{\f{K}} CW^*(K,K) , CW^{*-1}(K,L)) \\
\Phi_2^0 &\colon& SC^1(M) \otimes_{\f{K}} SC^1(M) \to CW^0(K,L) \\
\end{eqnarray*}
Let us emphasize again that in general the chain level maps $\Phi^i_j$ depend
on the particular perturbations used. This dependence on perturbations plays a
crucial role in this paper. 

The first one of these maps is the simplest. It is a chain map, i.e. 
\[ \Phi_1^0 d = \mu^1 \Phi_1^0 \]

For us, the most important component of the closed-open map is \[ \Phi_1^1 :
SC^1(M) \to hom_{\f{K}} (CW^*(K,L), CW^*(K,L)) \] In favorable cases, we will
use this map to define a representation of a sub-Lie algebra $\g$ of $SH^1(M)$
on $HW^*(K,L)$. To set this up, suppose that we are given a Lie
algebra embedding: $l : \g \to SH^1(M)$.
\[ \xymatrixcolsep{5pc}\xymatrix{  & SC^1(M) \ar[d] \\
            \g \lhook\mkern-7mu\ar[r]_{l} \ar@{.>}[ur]^{\tilde{l}} &  SH^1(M)
}\]

We then choose a lift $\tilde{l}: \g \to SC^1(M)$
of $l$. We prefer to do this in a pedestrian way: We choose an additive basis
$\{ [g_\alpha] \}_\alpha$ of $\g$ over $\f{K}$, where we reserve the notation
$g_\alpha$ for a choice of a cochain in $SC^1(M)$ representing $[g_\alpha] \in
SH^1(M)$. 

Next, we study the following questions in the order given via obstruction theoretical arguments:

\begin{enumerate}
\item When does $\Phi^1_1 \circ \tilde{l} $ induce a $\f{K}$-linear map: \[ \g \to End_{\f{K}} (HW^*(K,L)) \] 
\item Assuming (1) holds, when does $\Phi^1_1 \circ \tilde{l} $ induces a map of Lie algebras: \[ \g \to End_{\f{K}} (HW^*(K,L))  \] 
(Note that $End_{\f{K}}(HW^*(K,L)$ is naturally a Lie algebra using the commutator of endomorphisms). 
\item Assuming (1) and (2) hold, how does the Lie algebra representation of $\g$ on $HW^*(K,L)$ depend on the perturbation data used to define $\Phi^1_1$ and the lift $\tilde{l}$ of Lie algebra embedding $l : \g \to SH^1(M)$ ?  \end{enumerate}
It turns out that already the first question is not well-posed in general. One needs to correct the map $\Phi^1_1 \circ \tilde{l}$ by some additional terms. These modifications are due to Seidel and Solomon $\cite{seidelsol}$ which we proceed to discuss now. Once (1) (or rather a modification of it) is established, we will then answer the questions (2) and (3).

By considering the possible degenerations of the index one moduli space of
disks with one interior input, one boundary input and one boundary output where
the tangent line points towards the output boundary point, we obtain the
following:

\begin{proposition}\label{deg1} For all $x \in CW^*(K,L)$ and $a \in SC^*(M)$,
we have \[ (\Phi^1_1 d(a))(x) + (-1)^{|a|}  \Phi^1_1 (a) (\mu^1(x)) + \mu^1
(\Phi^1_1(a) (x)) =  \mu^2(\Phi^0_1(a), x)) - (-1)^{|x||a|} \mu^2(x,
\Phi^0_1(a)) \] \end{proposition}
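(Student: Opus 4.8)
The plan is to derive the stated identity as the algebraic content of the codimension-one boundary of the compactified one-dimensional moduli space of disks carrying one interior input, one boundary input, one boundary output, and a distinguished tangent direction at the interior puncture pointing toward the output. First I would fix notation: let $\mathcal{M}$ denote this parametrized moduli space, whose domain is a disk with the prescribed punctures; the conformal structure on such a domain has a one-real-parameter family of choices (the position of the boundary input relative to the output, after using the interior puncture plus tangent direction to rigidify), so $\dim \mathcal{M} = \mathrm{ind} + 1$, and we look at the component where $\mathrm{ind} = 0$. The map $\Phi^1_1(a)(x)$, roughly, counts the index-zero solutions in the top stratum with a fixed generic representative of the domain modulus; but here we want the \emph{one-dimensional} space, and we run the standard argument: a compact oriented $1$-manifold has signed boundary count zero, and each boundary stratum contributes one of the terms in the identity.

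The key step is to enumerate these boundary strata. There are two sources of breaking. First, strip-breaking at the boundary inputs/output: a Floer strip for $K$ or for $L$ can bubble off at either the incoming boundary marked point or the outgoing one, producing the two terms $\Phi^1_1(a)(\mu^1(x))$ and $\mu^1(\Phi^1_1(a)(x))$, with the sign $(-1)^{|a|}$ on the first arising from the Koszul rule for moving the differential past $a$ in our sign conventions (these are the conventions of \cite{seidelsphere}). Second, the modulus parameter runs to one of its two endpoints, where the domain degenerates: at one end the interior puncture together with its tangent direction collides with/pinches off onto the output side, and at the other end onto the input side. In the pinching limit a sphere-with-one-interior-input-and-one-interior-output (a cylinder) splits off carrying $a$, which is precisely the Floer differential $d(a)$ — giving the term $\Phi^1_1(d(a))(x)$ — while what remains of the disk is governed by the same operation; this accounts for the ``closed'' differential appearing on the left. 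At the other two ends of the modulus interval the domain breaks into a disk with one interior input and one boundary output (no boundary input) glued along a boundary node to a disk with two boundary inputs and one boundary output: these are exactly $\Phi^0_1(a)$ (the length-zero closed-open component landing in $CW^*(K,K)$ or $CW^*(L,L)$) composed with the product $\mu^2$, and depending on whether the node sits before or after the surviving boundary input along the boundary one gets $\mu^2(\Phi^0_1(a), x)$ or $\mu^2(x, \Phi^0_1(a))$, with relative sign $-(-1)^{|x||a|}$ from reordering. Summing all contributions to zero and transposing yields the displayed equation.

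The main obstacle is not the combinatorics of the strata but the bookkeeping that makes the argument rigorous: one must know that the perturbation datum $\mathcal{P}$ has been chosen consistently so that these moduli spaces are cut out transversally, that the relevant compactification is the correct one (the real blow-up of the Deligne--Mumford--Stasheff compactification of \cite{KSV}, since the modulus space is noncompact in the sense that strip-like ends can become infinitely long), and — most delicately — that the gluing maps at each boundary stratum are orientation-preserving with exactly the signs written, relative to a fixed coherent orientation of all the determinant lines $|o_x|_{\f{K}}$. This sign analysis is the technically demanding part; I would handle it exactly as in [\cite{thebook}, Section 12e] and [\cite{seidelsphere}, Section 4--5], where the analogous orientation conventions for closed-open maps are set up, and simply check that no additional strata appear beyond the four enumerated above — in particular that no disk bubble with $a$ distributed onto a purely closed piece other than the $d(a)$ term survives, which follows from the stability/energy constraints on index-zero configurations. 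Everything else (Gromov--Floer compactness, the absence of sphere or higher-disk bubbling by exactness and the topological hypotheses $2c_1(M)=0$, $c_1(M,L)=0$) is standard and may be invoked from the references.
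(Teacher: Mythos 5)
Your overall strategy is the same as the paper's: both identify the stated identity with the signed count of boundary points of the compactified one-dimensional moduli space of maps from a disk with one interior input (with tangent direction), one boundary input and one boundary output, using the fact (from Seidel--Solomon) that the underlying space of stable domains is a closed interval, and both defer the sign bookkeeping to the orientation conventions of Seidel--Solomon and \cite{seidelsphere}. The enumeration of the four types of terms is complete and the final transposition is right.

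However, your attribution of the terms to the strata is garbled, and since that attribution \emph{is} the proof, it needs fixing. You claim the term $\Phi^1_1(d(a))(x)$ arises when ``the modulus parameter runs to one of its two endpoints'' and a cylinder pinches off, and you then assign the two $\mu^2$ terms to ``the other two ends of the modulus interval'' --- but an interval has only two ends, so you have counted four endpoints of a one-dimensional domain moduli space. The correct accounting (and the one in the paper) is: the two endpoints of the domain interval are exactly the configurations where the interior puncture escapes to one of the two boundary arcs, bubbling off a disk with one interior input and one boundary output attached by a boundary node to a three-punctured disk; these produce the two terms $\mu^2(\Phi^0_1(a),x)$ and $\mu^2(x,\Phi^0_1(a))$ on the right-hand side, and nothing else. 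The terms $\Phi^1_1(d(a))(x)$, $\Phi^1_1(a)(\mu^1(x))$ and $\mu^1(\Phi^1_1(a)(x))$ are \emph{not} domain degenerations at all: they come from Gromov--Floer breaking of the maps (a Floer cylinder splitting off at the interior asymptotic, respectively a Floer strip splitting off at a boundary asymptotic), which occurs over interior points of the modulus interval, not at its ends. With that correction your argument coincides with the paper's; the remaining caveats you raise (consistency of the perturbation datum, the real blow-up compactification, coherent orientations) are handled by the references exactly as you say.
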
 \begin{proof} See [\cite{seidelsol}, pg. 7].
The stable compactification of the moduli space of disks with one interior
point and two boundary points is homeomorphic to a closed interval. The
boundary points of this moduli space gives the term on the right hand side. The
other terms involve the differentials $d$ and $\mu^1$ and come from the
Gromov-Floer compactification of stable maps. The signs are computed as in
[\cite{seidelsol}, Section 8].  \end{proof}

Therefore, in order to induce a $\f{K}$-linear map $SH^1(M) \to End_{\f{K}}
(HW^*(K,L))$ we need to compensate for the term \[ \mu^2(x, \Phi^0_1(a)) -
(-1)^{|x|} \mu^2(\Phi^0_1(a), x)\] for all $x \in CW^*(K,L)$ and $a \in
SC^1(M)$. 

\begin{remark} Note that if $K=L$ and $\Phi^0_1(a)$ is a central element for the product, $\mu^2$, on
$HW^*(L,L)$ for all $a$, then we have:
\[ (\Phi^1_1 d(a))(x) + (-1)^{|a|}  \Phi^1_1 (a) (\mu^1(x)) + \mu^1 (\Phi^1_1(a) (x)) = 0\] 
for all $x \in CW^*(L,L)$ and $a \in SC^*(M)$.
\end{remark}

In view of the Proposition \ref{deg1}, we have the following preliminary definition (cf. \cite{seidelsol} Definition 4.2):

\begin{definition} Let $\g$ be a non-zero sub-Lie algebra of $SH^1(M)$. Choose an additive basis $\{ [g_\alpha] \}_\alpha$ of $\g$ over $\f{K}$, where we reserve the notation $g_\alpha$ for a choice of a cochain in $SC^1(M)$ representing $[g_\alpha] \in SH^1(M)$. 

A Lagrangian $L \subset M$ is \emph{$\g$-invariant} if for all $g_\alpha$, one has
\[ \Phi^0_1(g_\alpha) = \mu^1 (c_\alpha) \text{\ \ for some } \ \ c_\alpha \in CW^0(L,L)  \]

\end{definition}

We note that if $\tilde{g}_\alpha = g_\alpha + dh_\alpha$ is another lift of
$[g_\alpha]$ to $SC^1(M)$, then \[ \Phi^0_1 (\tilde{g}_\alpha) = \mu^1 (c_\alpha
+ \Phi^0_1(h_\alpha))\] as $\Phi^0_1$ is a chain map. Hence the notion of being
$\g$-invariant for a Lagrangian $L$ does not depend on the choice of lifts
$g_\alpha$, nor does it depend on the choice of basis $[g_\alpha]$. Thus, we have:

\begin{lemma}\label{obs} The obstructions to the existence of $c_\alpha$ are the classes
$[\Phi^0_1(g_\alpha)] \in HW^1(L,L)$. These classes do not depend on the perturbation data $\mathcal{P}$ and the lift $\tilde{l}$ of the embedding $l : \g \to SH^1(M)$. 
\end{lemma}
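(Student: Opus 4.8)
The plan is to reduce both assertions to a single structural fact: the length-zero component \(\Phi^0_1\) is an honest chain map \((SC^*(M), d) \to (CW^*(L,L), \mu^1)\), and the map it induces on cohomology is the canonical length-zero closed--open map, so that the only genuine input is the class \([g_\alpha] \in SH^1(M)\). Granting this, everything else is formal.

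First I would record that \(\Phi^0_1(g_\alpha)\) is a \(\mu^1\)-cocycle. Indeed, \(g_\alpha\) is by construction a cochain representing a class in \(SH^1(M)\), so \(d g_\alpha = 0\); the chain-map identity \(\Phi^0_1 d = \mu^1 \Phi^0_1\) then gives \(\mu^1\bigl(\Phi^0_1(g_\alpha)\bigr) = \Phi^0_1(d g_\alpha) = 0\), and so \(\Phi^0_1(g_\alpha)\) determines a class \([\Phi^0_1(g_\alpha)] \in HW^1(L,L)\). By the definition of \(\g\)-invariance, a cochain \(c_\alpha \in CW^0(L,L)\) with \(\Phi^0_1(g_\alpha) = \mu^1(c_\alpha)\) is precisely a \(\mu^1\)-primitive of \(\Phi^0_1(g_\alpha)\); such a primitive exists if and only if \([\Phi^0_1(g_\alpha)] = 0\), which gives the first assertion. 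Independence of the lift \(\tilde{l}\) is then immediate from the computation recorded just before the lemma: replacing \(g_\alpha\) by \(g_\alpha + d h_\alpha\) alters \(\Phi^0_1(g_\alpha)\) by the \(\mu^1\)-coboundary \(\mu^1\bigl(\Phi^0_1(h_\alpha)\bigr)\), so the cohomology class is unchanged, and by \(\f{K}\)-linearity it in fact depends only on \([g_\alpha] \in SH^1(M)\), not even on the chosen basis.

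It remains to prove independence of the perturbation data \(\mathcal{P}\), and this is the one step that is not purely formal. I would identify the induced map \([\Phi^0_1] \colon SH^*(M) \to HW^*(L,L)\) with the length-zero component of the closed--open string map of Section \ref{theory}, equivalently with the composite of \(\mathcal{CO} \colon SH^*(M) \to HH^*(\mathcal{W}(M))\) with the canonical evaluation \(HH^*(\mathcal{W}(M)) \to HW^*(L,L)\) at the object \(L\); as the latter is a canonical morphism of invariants, it does not depend on \(\mathcal{P}\). Equivalently, and more hands-on, one connects two consistent choices \(\mathcal{P}\) and \(\mathcal{P}'\) by a generic path and runs the usual parametrized moduli-space argument for disks with one interior and one boundary puncture to obtain a chain homotopy \(\Psi \colon SC^*(M) \to CW^{*-1}(L,L)\) with \(\Phi^0_{1,\mathcal{P}} - \Phi^0_{1,\mathcal{P}'} = \mu^1 \Psi + \Psi d\); evaluating on the cocycle \(g_\alpha\) shows the two classes coincide. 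The only subtlety is that one must know the length-zero closed--open operation varies well in such a one-parameter family through the real-blow-up compactification of the relevant moduli space — precisely the compactness and transversality package set up in \cite{seidelsphere, seidelsol} and referenced in Section \ref{theory}. With that in hand, the remainder is formal homological algebra, and I expect the write-up to be short.
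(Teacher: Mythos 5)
Your proposal is correct and follows essentially the same route as the paper: the chain-map identity $\Phi^0_1 d = \mu^1\Phi^0_1$ makes $\Phi^0_1(g_\alpha)$ a cocycle whose class is the obstruction, and replacing $g_\alpha$ by $g_\alpha + dh_\alpha$ shifts it by the coboundary $\mu^1(\Phi^0_1(h_\alpha))$, exactly as in the discussion preceding the lemma. For independence of $\mathcal{P}$ the paper simply cites the fact that the closed--open map is well defined on cohomology (\cite{seideldef}), whereas you additionally sketch the underlying chain-homotopy/continuation argument; this is a more detailed justification of the same point, not a different approach.
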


 If these classes vanish for all $g_\alpha$, then we can define a
$\f{K}$-linear map \begin{equation} c_L : \g \to CW^0(L,L) \end{equation} by
$c_L(g_\alpha) = c_\alpha$. The map $c_L$ \emph{depends} on the choice of lifts
$g_\alpha$. On the other hand, we consider $c_L$ and $c'_L$ equivalent if the image of
$c_L-c'_L : \g \to CW^0(L,L)$ consists of coboundaries for the Floer
differential. Then, the equivalence class of choices for $c_L$ does not depend
on the lifts $g_\alpha$, and is an affine space over $Hom_{\f{K}}(\g,
HW^0(L,L))$.

We then have:

\begin{corollary} \label{linear} Let $K$ and $L$ be $\g$-invariant Lagrangians in $M$. Pick a choice of basis $[g_\alpha]$ of $\g$ and lifts $g_\alpha$ to obtain maps $c_L, c_K$ as in the previous paragraph. Then
the map\[ \rho (g_\alpha)(x)= \Phi^1_1(g_\alpha)(x) - \mu^2 ( c_L (g_\alpha ), x ) + \mu^2 ( x , c_K (g_\alpha)) \]
induces a well-defined $\f{K}$-linear map, in particular is independent of $g_\alpha$,  
\[ \rho : \g \to End_{\f{K}} (HW^*(K,L))  \] 
\end{corollary}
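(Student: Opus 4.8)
The plan is to show that the cochain-level map $g_\alpha \mapsto \rho(g_\alpha)$ descends to cohomology, i.e.\ that (a) $\rho(g_\alpha)$ is a chain map $CW^*(K,L) \to CW^*(K,L)$, and (b) the induced map on $HW^*(K,L)$ is independent of the lifts $g_\alpha$ and of the representing cochains $c_L(g_\alpha)$, $c_K(g_\alpha)$; linearity is then immediate since everything in sight is $\f{K}$-linear in $g_\alpha$. For (a), the key input is Proposition~\ref{deg1} applied with $a = g_\alpha \in SC^1(M)$. Since $g_\alpha$ is a cocycle, $d(g_\alpha)=0$, so the proposition reads
\[ -\Phi^1_1(g_\alpha)(\mu^1(x)) + \mu^1(\Phi^1_1(g_\alpha)(x)) = \mu^2(\Phi^0_1(g_\alpha), x) - (-1)^{|x|}\mu^2(x, \Phi^0_1(g_\alpha)). \]
Now substitute $\Phi^0_1(g_\alpha) = \mu^1(c_L(g_\alpha))$ on the $L$-side and $\Phi^0_1(g_\alpha) = \mu^1(c_K(g_\alpha))$ on the $K$-side (using $\g$-invariance of $L$ and $K$), and expand $\mu^2(\mu^1(c_L(g_\alpha)), x)$ and $\mu^2(x, \mu^1(c_K(g_\alpha)))$ using the $A_\infty$ relation $\mu^1\mu^2 = \mu^2(\mu^1 \otimes 1) \pm \mu^2(1 \otimes \mu^1)$. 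After collecting terms one finds that $\rho(g_\alpha)(x) = \Phi^1_1(g_\alpha)(x) - \mu^2(c_L(g_\alpha), x) + \mu^2(x, c_K(g_\alpha))$ satisfies $\mu^1 \rho(g_\alpha)(x) = \rho(g_\alpha)(\mu^1(x))$ up to sign; the correction terms $\mu^2(c_L,-)$ and $\mu^2(-,c_K)$ are precisely engineered so that their $\mu^1$-anomalies cancel the right-hand side above, which is the whole point of Seidel--Solomon's construction. The sign bookkeeping must follow the conventions of \cite{seidelsol}, Section~8.

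For (b), suppose first we replace $c_L(g_\alpha)$ by an equivalent cochain $c_L(g_\alpha) + \mu^1(h)$ for some $h \in CW^{-1}(L,L)$. Then $\rho(g_\alpha)$ changes by $-\mu^2(\mu^1(h), x)$, which by the $A_\infty$ relation equals $\pm\mu^1(\mu^2(h,x)) \mp \mu^2(h, \mu^1(x))$; this is a chain homotopy, hence acts as zero on $HW^*(K,L)$. The same argument applies on the $K$-side. Next, if we change the lift $g_\alpha \rightsquigarrow g_\alpha + d h_\alpha$: since $\Phi^1_1$ is itself part of a chain-level $\mathcal{CO}$-type structure, $\Phi^1_1(dh_\alpha)$ differs from a coboundary term by exactly the contributions that Proposition~\ref{deg1} (now with $a = h_\alpha$, $|a|=0$) controls, while simultaneously $c_L(g_\alpha)$ and $c_K(g_\alpha)$ shift by $\Phi^0_1(h_\alpha)$ as noted in the paragraph before Lemma~\ref{obs}; these shifts conspire so that $\rho(g_\alpha)$ changes by a chain homotopy. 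Finally, independence of the choice of additive basis $\{[g_\alpha]\}$ follows by $\f{K}$-linearity once we know the cohomology-level map is well-defined on each basis vector.

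The main obstacle I expect is the sign computation in step (a): matching the signs in Proposition~\ref{deg1}, the $A_\infty$ relation for $\mu^1\mu^2$, and the definition of $\rho$ so that the anomalies cancel \emph{exactly} rather than up to a unit. This is the kind of computation that is conceptually forced but error-prone, and it is why the statement carefully records the precise signs $-\mu^2(c_L(g_\alpha),x) + \mu^2(x, c_K(g_\alpha))$ in the two correction terms. A secondary technical point is that $\Phi^1_1$ and $\Phi^0_1$ must be defined using one and the same consistent perturbation datum $\mathcal{P}$, so that Proposition~\ref{deg1}'s degeneration analysis genuinely applies to the chain maps appearing in the definition of $\rho$; this is exactly the consistency issue flagged in the discussion of the closed-open map above, and we invoke it here without further comment.
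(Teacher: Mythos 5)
Your proposal is correct and follows essentially the same route as the paper: the paper's own (very terse) proof also reduces everything to Proposition~\ref{deg1} (applied with $a=g_\alpha$ a cocycle, then with $a=h_\alpha$ for a change of lift) together with the observation preceding Lemma~\ref{obs} that changing the lift shifts $c_L,c_K$ by $\Phi^0_1(h_\alpha)$, plus the $A_\infty$ Leibniz rule to cancel the $\mu^2(\mu^1 c,\,\cdot\,)$ anomalies. You have simply spelled out the details (chain-map property, invariance under $x\mapsto x+\mu^1 y$, $c\mapsto c+\mu^1 h$, and $g_\alpha\mapsto g_\alpha+dh_\alpha$) that the paper leaves implicit.
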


\begin{proof} 
First, one has to check that if $\mu^1(x)=0$, then $\mu^1(\rho_\mathcal{P}(g_\alpha)(x))= 0$. Second, one has to check that changing $g_\alpha$ to $g_\alpha + dh_\alpha$ or $x \to x + \mu^1 y$ does not change the class $[\rho_\mathcal{P}(g_\alpha)(x))]  \in HW^*(K,L)$. All of these are direct consequences of the discussion preceding Lemma \ref{obs} and Proposition \ref{deg1}.
\end{proof}

We want to emphasize that to define $\rho$ we had to first fix a perturbation
data $\mathcal{P}$ and then choose cochain-level maps $c_K$ and $c_L$ as above.
For any two choices of $\mathcal{P}$ the corresponding affine spaces of choices
of $c_K$ and $c_L$ are canonically identified in a non-trivial way. In
particular, the map $\rho$ depends on the choice of perturbation data
$\mathcal{P}$. We often work with a fixed choice of perturbation and we write
$\rho_{\mathcal{P}}$ whenever we want to emphasize the dependence on the
perturbation $\mathcal{P}$, though note that this notation still suppresses the
corresponding additional choices of $c_K$ and $c_L$. 

The dependence on $\mathcal{P}$ can be controlled since the closed-open map
induces a well-defined map (independent of $\mathcal{P}$) $SH^*(M) \to
HH^*(\mathcal{W}(M))$ (\cite{seideldef}). 

Namely, let $\mathcal{P}$ and $\mathcal{P'}$ be two different perturbation data
for open and closed invariants. Restricting our attention to the wrapped Fukaya
category $\mathcal{W}(M)$ i.e. open invariants, general theory gives chain
homotopy equivalences between $CW_{\mathcal{P}}^*(K,L)$ and
$CW_{\mathcal{P'}}^*(K,L)$ for any two Lagrangians $K,L$, via continuation maps
induced by interpolation between the perturbation datum $\mathcal{P}$ and
$\mathcal{P'}$. One can analyze how the representation $\rho$ changes in a
compatible way (see for ex. Prop. \ref{prop:continuation}).  In the following
proposition, to avoid extra notational complexity, we suppress these chain
homotopy equivalences and use $CW^*(K,L)$ to denote either
$CW_{\mathcal{P}}^*(K,L)$ or $CW^*_{\mathcal{P'}} (K,L)$ by assuming that
$\mathcal{P}$ and $\mathcal{P'}$ are extensions of a fixed choice of
perturbation data used to define $\mathcal{W}(M)$. 

\begin{proposition}\label{dependence} Assume that $\mathcal{P}$ and $\mathcal{P'}$ are extensions of a fixed choice of perturbation data used to define the wrapped Fukaya category $\mathcal{W}(M)$. 
	
Then, there exist $\f{K}$-linear maps 
\[ (s_K)_{\mathcal{P}, \mathcal{P'}} : \g \to CW^0(K,K) \] 
\[ (s_L)_{\mathcal{P}, \mathcal{P'}} : \g \to CW^0(L,L) \] 
such that
\[ \rho_\mathcal{P'}(g_\alpha)(x) = \rho_{\mathcal{P}} (g_\alpha)(x) + \mu^2_\mathcal{P} ( s_L (g_\alpha),x ) + \mu^2_{\mathcal{P}} (x, s_K (g_\alpha))  \]
\end{proposition}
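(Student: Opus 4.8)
The plan is to compare the two closed-open chain-level operations $\Phi^1_1$, $\Phi^0_1$, $\Phi^1_2$ associated to the two perturbation data $\mathcal{P}$ and $\mathcal{P}'$ by building a one-parameter interpolation, and to read off the correction terms $s_K$, $s_L$ from the resulting chain homotopies. Since $\mathcal{P}$ and $\mathcal{P}'$ both restrict to the same perturbation data used to define $\mathcal{W}(M)$, the wrapped complexes $CW^*(K,L)$, $CW^*(K,K)$, $CW^*(L,L)$ are literally the same, and all the open $A_\infty$ operations $\mu^d$ agree; only the mixed closed-open operations differ. First I would recall that the closed-open map $\mathcal{CO}\colon SH^*(M)\to HH^*(\mathcal{W}(M))$ is canonically independent of perturbation data (\cite{seideldef}), so the two chain-level maps $\Phi^{\bullet}_{\mathcal{P}}$ and $\Phi^{\bullet}_{\mathcal{P}'}$ (as Hochschild cochains of $\mathcal{W}(M)$ with values in the full closed-open structure) are chain homotopic; I would denote by $T^{\bullet}$ a chain homotopy, constructed geometrically by counting isolated points in the moduli spaces of the relevant punctured disks where the perturbation datum near the interior puncture is allowed to vary along an interpolating path from $\mathcal{P}$ to $\mathcal{P}'$. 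Restricting to a subcategory consisting of $K$ and $L$ and to $SC^1(M)$, the pieces of $T^{\bullet}$ that matter are $T^0_1\colon SC^1(M)\to CW^{-1}(K,K)\oplus CW^{-1}(L,L)$ and $T^1_1\colon SC^1(M)\to hom_{\f{K}}(CW^*(K,L),CW^{*-1}(K,L))$.

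Next I would write down the chain-homotopy identities governing these interpolated moduli spaces, exactly analogous to Proposition \ref{deg1}. The codimension-one boundary of the relevant interval-parametrized (or higher-dimensional, cut down to dimension one) moduli space has three kinds of strata: the two endpoints of the interpolation parameter, giving $\Phi^1_{1,\mathcal{P}'}(a)(x) - \Phi^1_{1,\mathcal{P}}(a)(x)$; breaking off a Floer differential or continuation strip on one of the inputs or the output, giving terms with $\mu^1$, $d$, and $T^{\bullet}$; and breaking off a disk with an extra boundary input carrying $(\Phi^0_{1,\mathcal{P}'} - \Phi^0_{1,\mathcal{P}})(a)$ or $T^0_1(a)$ together with a $\mu^2$, giving the $\mu^2(-,-)$ terms. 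Similarly, the interpolation for $\Phi^0_1$ gives $\Phi^0_{1,\mathcal{P}'}(g_\alpha) - \Phi^0_{1,\mathcal{P}}(g_\alpha) = \mu^1(t_L(g_\alpha)) + (\text{terms from }d)$ where $t_L = T^0_1$ on the $L$-component, and since both $L$ and $K$ are $\g$-invariant for either perturbation (by Lemma \ref{obs}, $\g$-invariance is perturbation-independent), we may adjust the cochain-level choices $c_L$, $c_K$ so that the discrepancy is controlled. Combining these identities and passing to cohomology, the terms involving $\mu^1$, $d$ and the Floer differential on $x$ drop out on $HW^*(K,L)$ (using $\mu^1(x)=0$), and what survives is precisely an identity of the form $\rho_{\mathcal{P}'}(g_\alpha)(x) = \rho_{\mathcal{P}}(g_\alpha)(x) + \mu^2_{\mathcal{P}}(s_L(g_\alpha),x) + \mu^2_{\mathcal{P}}(x,s_K(g_\alpha))$, where $s_L$ and $s_K$ are assembled from $T^0_1$ (and from the difference of the $c$'s) on the $L$- and $K$-components respectively; one checks directly that $s_L(g_\alpha)\in CW^0(L,L)$ and $s_K(g_\alpha)\in CW^0(K,K)$ since $T^0_1$ lowers degree by one on $SC^1(M)$, and that $g_\alpha\mapsto s_L(g_\alpha)$, $g_\alpha\mapsto s_K(g_\alpha)$ are $\f{K}$-linear because the underlying count of interpolated moduli points is linear in the interior input.

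The main obstacle I expect is the bookkeeping for the perturbation-theoretic consistency of the interpolating moduli spaces: one must choose the interpolation between $\mathcal{P}$ and $\mathcal{P}'$ so that it is compatible with the already-fixed perturbations on $\mathcal{W}(M)$ and with the gluing/consistency structure on the real blow-up of the Deligne-Mumford compactification (the issue flagged in Section \ref{theory} following \cite{KSV}, \cite{seidelsol}, and \cite{seidelsphere}). Concretely this requires that the chain homotopy $T^{\bullet}$ be constructed inductively on the number of boundary inputs in a manner consistent with the $A_\infty$ operations, just as $\Phi^{\bullet}$ itself was; granting that such a consistent choice exists (which follows from the same inductive argument as in [\cite{thebook}, Section 9] extended as in [\cite{seidelsphere}, Section 5]), the rest is a matter of carefully matching boundary strata and keeping track of signs, which can be done by the methods of [\cite{seidelsol}, Section 8]. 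A secondary point is that the difference of cochain-level choices $c_L^{\mathcal{P}} - c_L^{\mathcal{P}'}$ must be absorbed into $s_L$: since $\g$-invariance is perturbation-independent but the bounding cochains $c_\alpha$ are not canonical, one simply includes these differences as part of the definition of $s_L$ and $s_K$, at which point the stated identity holds on the nose at the level of $HW^*(K,L)$.
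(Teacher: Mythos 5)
Your proposal is correct and is essentially the paper's argument: the paper invokes the perturbation-independence of $\mathcal{CO}\colon SH^*(M)\to HH^*(\mathcal{W}(M))$ from \cite{seideldef}, so that the two chain-level maps differ by the Hochschild coboundary of a cochain $(\alpha^0,\alpha^1,\dots)\in CC^0(\mathcal{W}(M))$, and then reads off $\alpha^0=(s_K,s_L)$ as the only component that can produce $\mu^2$-corrections to the relevant piece, while the $\mu^1\alpha^1\pm\alpha^1\mu^1$ terms disappear on cohomology --- your interpolating moduli spaces and chain homotopy $T^\bullet$ (with $T^0_1$ playing the role of $\alpha^0$, and the discrepancies of $c_K,c_L$ absorbed into $s_K,s_L$ as you say) are precisely the geometric unpacking of that cited fact. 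The only slip is a degree typo: on $SC^1(M)$ the homotopy $T^0_1$ takes values in $CW^0(K,K)\oplus CW^0(L,L)$ (it lowers degree by one), not $CW^{-1}$, which is what you in fact use later when checking $s_L(g_\alpha)\in CW^0(L,L)$.
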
 
\begin{proof}

Recall that the closed-open map $\mathcal{CO} : SH^*(M) \to HH^*(\mathcal{W}(M))$ is given at the chain level by the sum of the maps:
\[ \Phi^i_1 : SC^*(M) \to CC^*(\mathcal{W}(M),\mathcal{W}(M)) \]
Let us restrict our attention to the piece:
\[ \Phi^1_1: SC^1(M) \to hom(CW^* (K,L),CW^* (K,L)) \] 

Changing the perturbation data will modify this by a Hochschild coboundary of a
cochain in $CC^0(\mathcal{W}(M))$. Denote this cochain by
$(\alpha^0,\alpha^1,\ldots)$ where $\alpha^i \in hom_{\f{K}} ((CW^*)^{\otimes
i} , CW^{*-i})$. Recall that the differential on the Hochschild cochains is given by $\delta(\alpha^*) = [\alpha^*, \mu^*]$ where $[\cdot\ , \cdot ]$ is the Gerstenhaber bracket on Hochschild cochains. For the coboundary of such a chain to modify the component of
$\Phi^1_1$ that we restricted our attention to, it must be that $\alpha^0 =(s_K,s_L) \in CW^0(K,K) \oplus CW^0(L,L)$, in which case we would get:
\[ (\Phi^1_1)_\mathcal{P'} = (\Phi^1_1)_\mathcal{P} + \mu^2(s_L, \cdot) + \mu^2( \cdot , s_K) \pm \mu^1 \alpha^1 (  \cdot  ) \pm \alpha^1 \mu^1 (  \cdot  ) \]

Hence, passing to cohomology, $HW^*(K,L)$, we obtain the stated result.
\end{proof}
 
\paragraph{Summary:} Given an embedding $l : \g \to SH^1(M)$, and Lagrangians $K,L \subset M$ which are $\g$-invariant, we fix a perturbation data $\mathcal{P}$. We can then choose $c_K$ and $c_L$ and define a $\f{K}$-linear map $\rho_\mathcal{P} : \g \to End_{\f{K}} (HW^*(K,L))$. However, this map depends on $\mathcal{P}$ and the choices of $c_K$ and $c_L$. On the other hand, the overall dependence can be absorbed into the choice of the pair $c_K$ and $c_L$ which is up to equivalence is an affine space over $Hom_{\f{K}}(\g, HW^0(K,K) \oplus HW^0(L,L))$.

 \subsection{Upgrading $\rho_{\mathcal{P}}$ to a Lie algebra homomorphism}

Our next task is to upgrade $\rho_{\mathcal{P}}$ to a Lie algebra homomorphism.
This will put some conditions on the choice of $c_L$ for all $L$. To work out
what this condition is, we will need to spell out various compatibility
relations satisfied by the components of the closed-open string map and the
$A_\infty$ operations in the wrapped Fukaya category. We remark that the
relations that we discuss in this paper is a subset of the relations of the
Open-Closed Homotopy Algebra (OCHA) that was studied in \cite{KajStash}. 
  
In pseudo-holomorphic curve theory, the general strategy for obtaining relations among various counts is to consider index one moduli space of disks with fixed number of boundary and interior punctures labelled by inputs/outputs and study their degenerations. Note that interior punctures should also be labelled with a tangent direction to fix the parametrization of the orbit. 

 The next proposition and the idea of its
proof were shown to us by Nick Sheridan:

\begin{proposition}\label{lie} For all $a,b \in SC^*(M)$ and $x \in CW^*(K,L)$,
	we have: \begin{align*} & \Phi_1^1 ([a,b])(x) + (-1)^{|a|} (\Phi^1_1(a)
		\circ \Phi^1_1 (b)) (x) + (-1)^{(|a|+1)|b|} (\Phi^1_1(b) \circ
		\Phi^1_1 (a)) (x) \\ &+ (-1)^{|a|+|b|} \Phi^1_2 (a,b)( \mu^1 x)
		+ \mu^1 (\Phi^1_2(a,b)(x)) + \Phi^1_2(da,b)(x) + (-1)^{|a|}
		\Phi^1_2(a,db)(x) \\ & + \Phi^2_1 (a)(\Phi^0_1(b), x) -
		(-1)^{|b||x|} \Phi^2_1(a)(x,\Phi^0_1(b)) + (-1)^{|a||b|}
		\Phi^2_1(b)(\Phi^0_1(a),x) \\ & - (-1)^{|a|(|b|+|x|)}
		\Phi^2_1(b)(x,\Phi^0_1(a)) - \mu^2( \Phi_2^0 (a,b) ,x ) +
		(-1)^{|x| (|a|+|b|)} \mu^2(x, \Phi_2^0(a,b)) = 0 \end{align*}
\end{proposition}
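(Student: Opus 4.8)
The plan is to derive this identity exactly as one derives the $A_\infty$ relations and the earlier Proposition~\ref{deg1}: by choosing the right moduli space of decorated disks, establishing that its index is one, identifying it (with its natural compactification) as a compact $1$-manifold with boundary, and reading off the codimension-one boundary strata. The claimed identity then follows from the fundamental fact that the signed count of the boundary of a compact oriented $1$-manifold is zero.

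\medskip

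\noindent\textbf{Choice of moduli space.} The relevant domains are disks $\f{D}$ with two interior punctures (inputs, each decorated with an asymptotic marker fixing the parametrization of an orbit) labelled by $a,b\in SC^*(M)$, and two boundary punctures — one input labelled by $x\in CW^*(K,L)$ and one output — with the asymptotic marker at the output pointing toward the output boundary point, in the same fashion as in Proposition~\ref{deg1}. The moduli space of such domains modulo reparametrization is $3$-dimensional (two interior points each contribute $2$ real parameters, minus the $3$-dimensional automorphism group of the disk fixing no points, plus the circle of rotations at the two interior markers: $2\cdot 2 - 3 + \text{(one residual marker parameter)}$; more carefully one copies the dimension count of [\cite{seidelsphere}, Section~4] for the two-interior-input closed-open operations). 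Imposing that the total expected dimension of the space of Floer solutions with fixed asymptotics is one pins down the index-one moduli space $\mathcal M$ whose boundary count gives the proposition. The consistent perturbation data $\mathcal P$ fixed earlier is used throughout to make $\mathcal M$ regular and its Gromov–Floer compactification $\overline{\mathcal M}$ a compact topological $1$-manifold with boundary.

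\medskip

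\noindent\textbf{Boundary strata.} The codimension-one boundary of $\overline{\mathcal M}$ splits into two types. (i) \emph{Gromov–Floer breaking}: a Floer cylinder or strip breaks off at one of the four punctures. Breaking at the two interior inputs produces the terms $\Phi^1_2(da,b)(x)$ and $(-1)^{|a|}\Phi^1_2(a,db)(x)$; breaking at the boundary input and output produces $(-1)^{|a|+|b|}\Phi^1_2(a,b)(\mu^1 x)$ and $\mu^1(\Phi^1_2(a,b)(x))$. (ii) \emph{Domain degeneration} in the real blow-up of Deligne–Mumford space: the disk can bubble into two components. When the two interior punctures collide, they are replaced by a sphere bubble carrying $a,b$ attached at an interior node, which by the definition of the Gerstenhaber bracket on $SC^*(M)$ contributes the term $\Phi^1_1([a,b])(x)$ (this is precisely the geometric origin of the bracket; compare the second footnote and the OCHA relations of \cite{KajStash}). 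When one interior puncture, together with the boundary input or output, splits off onto a disk component, one gets the compositions $\Phi^1_1(a)\circ\Phi^1_1(b)$, $\Phi^1_1(b)\circ\Phi^1_1(a)$, as well as the mixed terms $\Phi^2_1(a)(\Phi^0_1(b),x)$, $\Phi^2_1(a)(x,\Phi^0_1(b))$, $\Phi^2_1(b)(\Phi^0_1(a),x)$, $\Phi^2_1(b)(x,\Phi^0_1(a))$ — here a disk with one interior input produces $\Phi^0_1$ of one of $a,b$ as a boundary output that is then fed into a $\Phi^2_1$-operation — and finally $\mu^2(\Phi^0_2(a,b),x)$ and $\mu^2(x,\Phi^0_2(a,b))$, coming from the stratum where both interior punctures degenerate onto a disk component producing $\Phi^0_2(a,b)\in CW^0(K,L)$, which is then multiplied into $x$. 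Summing all contributions with the boundary orientation gives zero, which is the stated identity.

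\medskip

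\noindent\textbf{Main obstacle.} The conceptual content — which moduli space, which strata — is a routine (if intricate) adaptation of the OCHA formalism and of [\cite{seidelsphere}, Section~4--5]. The genuinely delicate point, and the one I expect to consume the most care, is the \emph{signs}: tracking the orientation of $\overline{\mathcal M}$ through the real blow-up of Deligne–Mumford space and matching it against the sign conventions on orientation lines for $SC^*$ and $CW^*$, so that each of the twelve terms acquires exactly the stated power of $-1$. This is done following the bookkeeping of [\cite{seidelsol}, Section~8] (whose conventions we have adopted), together with the spin/orientation apparatus of [\cite{thebook}, Section~12] for the boundary punctures; the interior-puncture signs and the $[a,b]$-term sign follow the conventions of \cite{seidelsphere} for the closed-open map. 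A secondary technical point is verifying that the two families of perturbations (closed, for $SC^*$ and its continuation maps, and open, for $\mathcal W(M)$) can be chosen consistently on this $3$-dimensional family of domains and its degenerations; as noted in Section~\ref{theory}, this is handled exactly as the consistency arguments of [\cite{thebook}, Section~9] extended to the non-compact parameter spaces as in \cite{seidelsol} and [\cite{seidelsphere}, Section~5].
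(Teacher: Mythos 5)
Your proposal is correct and follows essentially the same route as the paper's proof: the paper likewise analyzes the boundary of the index-one moduli space of disks with two interior inputs, one boundary input and one boundary output (tangent directions pointing at the output), obtains the $d$- and $\mu^1$-terms from Gromov--Floer breaking, the remaining terms from the codimension-one strata of the Kimura--Stasheff--Voronov real blow-up compactification (with the $\Phi^1_1([a,b])$ term arising exactly as you say, from the two interior punctures colliding and the $S^1$ of relative phases sweeping the cycle defining the bracket on a three-punctured sphere bubble), and defers the signs to Seidel--Solomon, Section 8. The only blemish is your inline domain-dimension bookkeeping (which as written gives $2$ rather than the correct $3$), but since you state the correct dimension and defer to Seidel's count, this does not affect the argument.
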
 \begin{proof} Let us first recall how the bracket \[ [\ ,\ ] :
SC^i(M) \otimes SC^j(M) \to SC^{i+j-1}(M) \] is defined geometrically following
\cite[Sec. 4]{seidelsphere}.  Namely, one considers the moduli space of spheres
with 3 marked points equipped with tangent directions. It is easy to see that
the moduli space of all such configurations can be identified with $(S^1)^3$ -
the marked points can be uniformized to fixed locations, and the choice of
tangent direction at each marked point gives an $S^1$. Now, one chooses a
$1$-dimensional homology class in $H_1((S^1)^3)$ which rotates the tangent
directions by $2\pi$ at two marked points (inputs) in clockwise and
anticlockwise for the remaining point (output). The bracket is then defined by
counting parametrized moduli spaces of solutions to pseudo-holomorphic map
equation $M$ with labelled inputs and output where the parameter varies along an $S^1$ family of three-punctured spheres such that that tangent directions vary along the $1$-dimensional homology class that we have described.

Now, consider the degenerations of index one moduli space of disks with two
interior inputs, one boundary input, and one boundary output where tangent
directions are constrained to point towards the output boundary point. This
moduli space is smooth $1$-dimensional manifold with boundary. The various
configurations that arise at its boundary correspond to the terms in the
statement of the proposition.

The terms which involve $\mu^1$ and $d$ come from strip and cylinder breaking
in the Gromov-Floer compactification of stable maps. The terms that do not
involve the differentials $\mu^1$ or $d$ come from codimension one boundary of
the moduli space of stable disks with two boundary and two interior points as
dictated by Kimura-Stasheff-Voronov compactification \cite{KSV}. As an example,
it is worth spelling out the occurrence of the first term. This results from
the boundary contribution arising from the two interior marked points
colliding, leading to a bubbling off of a three-punctured holomorphic sphere.
As the two points collide their tangent vectors become parallel. This means
that the tangent directions in the sphere bubble are in a configuration that
lies in the 1-dimensional homology class in $(S^1)^3$ that is used to define
the bracket geometrically as explained above. Furthermore, there is an
$S^1$-worth of directions in which the two marked points come together,
parametrized by their relative phase, and this $S^1$-family covers the entire
homology class. As this three-punctured sphere has inputs labelled by $a$ and
$b$, and it outputs $[a,b]$ which is then fed into the remaining component
contributing to $\Phi^1_1$, hence the overall contribution of this boundary
component is $\Phi^1_1([a,b])(x)$.  

The other terms can be understood by similar bubbling considerations. Finally, the intricate
computation of signs follow from the discussion in [\cite{seidelsol}, Section
8].  \end{proof}

We will also need the following:

\begin{proposition}\label{multip} For all $a \in SC^*(M)$ and $x_1 \in CW^*(L_0,L_1), x_2 \in CW^*(L_1,L_2)$, we have: \begin{align*} 
&\mu^1(\Phi^2_1(a)(x_2,x_1)) - \Phi^2_1(da)(x_2,x_1) - (-1)^{|a|} \Phi^2_1(a)(\mu^1(x_2),x_1) \\ 
& - (-1)^{|a|+|x_2|} \Phi^2_1(a)(x_2,\mu^1(x_1)) + \mu^3(\Phi^0_1(a),x_2,x_1) - (-1)^{|a||x_2|} \mu^3(x_2, \Phi^0_1(a),x_1) \\ 
&+ (-1)^{(|x_2|+|x_1|)|a|} \mu^3(x_2,x_1,\Phi^0_1(a)) - \Phi^1_1(a)(\mu^2(x_2,x_1)) \\ 
&+ \mu^2(\Phi^1_1(a)(x_2),x_1) + (-1)^{(|a|+1)|x_2|} \mu^2(x_2, \Phi^1_1(a)(x_1)) = 0
\end{align*}
\end{proposition}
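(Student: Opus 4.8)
The plan is to run the standard ``index-one moduli space'' argument, in exact parallel with the proof of Proposition \ref{lie}, but now for the moduli space relevant to the $\Phi^2_1$ operation together with a multiplication on the boundary. Concretely, I would consider the moduli space $\mathcal{N}$ of holomorphic disks with one interior input (equipped with an asymptotic tangent direction), two boundary inputs labelled $x_1 \in CW^*(L_0,L_1)$ and $x_2 \in CW^*(L_1,L_2)$ in cyclic order, and one boundary output in $CW^*(L_0,L_2)$; the interior marked point is free to move in the disk. One checks via the index formula that the component of $\mathcal{N}$ contributing to the proposition is a $1$-dimensional manifold with boundary (after the Gromov--Floer-type compactification, including the real-blow-up of the Deligne--Mumford-type compactification of the underlying domains, as discussed in \cite{KSV} and \cite{seidelsphere}). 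The identity is then simply the statement that the signed count of the boundary points of this compactified $1$-manifold is zero.

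The main work is to enumerate the boundary strata and match each to a term in the displayed formula. I expect these to organize as follows. Strip-breaking at the boundary inputs/output and cylinder-breaking at the interior input produce the terms $\mu^1(\Phi^2_1(a)(x_2,x_1))$, $\Phi^2_1(da)(x_2,x_1)$, $\Phi^2_1(a)(\mu^1(x_2),x_1)$, $\Phi^2_1(a)(x_2,\mu^1(x_1))$ (these are exactly the Floer differentials applied to inputs/output of $\Phi^2_1$). The strata where the interior marked point collides with the boundary --- approaching the boundary arc at one of the three ``gaps'' between the marked points --- bubble off a disk carrying the interior puncture, which evaluates to $\Phi^0_1(a)$, glued to a disk with three boundary inputs and one output, i.e.\ $\mu^3$; this gives the three terms $\mu^3(\Phi^0_1(a),x_2,x_1)$, $\mu^3(x_2,\Phi^0_1(a),x_1)$, $\mu^3(x_2,x_1,\Phi^0_1(a))$, with signs recording the cyclic position. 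Finally, the strata where two adjacent boundary inputs/output come together bubble off a $\mu^2$-disk, and depending on which two points collide one gets either $\Phi^1_1(a)(\mu^2(x_2,x_1))$ (the two inputs merge, the interior puncture stays on the main component), or $\mu^2(\Phi^1_1(a)(x_2),x_1)$ and $\mu^2(x_2,\Phi^1_1(a)(x_1))$ (one input merges with the output, the interior puncture going onto the component that retains two boundary special points, which realizes $\Phi^1_1$). Summing all contributions with the correct orientations yields the stated relation.

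The hard part, as always in this circle of ideas, is the sign bookkeeping: tracking the orientation of $\overline{\mathcal{N}}$ induced by the orientation lines $|o_x|_\f{K}$, the Koszul signs from reordering the interior versus boundary inputs, and the conventions for the asymptotic tangent direction at the interior puncture. I would handle this exactly as in \cite[Section 8]{seidelsol}, which is the sign calculus underlying Propositions \ref{deg1} and \ref{lie} in this paper, and simply assert that the same conventions produce the signs in the display. A secondary technical point is the consistency of the perturbation datum $\mathcal{P}$ across this new family of domains with the data already fixed for $\mathcal{W}(M)$ and for $SC^*(M)$; this is subsumed in the general consistency discussion of Section \ref{theory} (following \cite[Section 9]{thebook} and its extension in \cite{seidelsphere}), so no new input is needed there. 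I would therefore present the proof telegraphically, drawing the degeneration picture, listing the boundary strata, and deferring signs to \cite{seidelsol}.
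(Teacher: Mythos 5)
Your proposal is correct and follows essentially the same route as the paper, which simply invokes the degeneration analysis of the index-one moduli space of disks with one interior puncture, two boundary inputs and one boundary output (as in the proof of Proposition \ref{lie} and [\cite{seidelsol}, pg.~8]). Your enumeration of the boundary strata and the deferral of signs to [\cite{seidelsol}, Section 8] matches the paper's argument.
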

\begin{proof} The proof is very similar to the proof of the previous proposition. One considers the moduli space of stable disks with one interior point and three boundary points, two of which is considered as inputs and one is as output. See [\cite{seidelsol}, pg. 8]. 
\end{proof} 

Finally, by considering the moduli space of index one stable disks with two interior marked points and one boundary point, we obtain the following:

\begin{proposition} \label{second} For all $a,b \in SC^*(M)$, we have:
\begin{align*}
& \Phi^0_1([a,b]) - \Phi_1^1(a)(\Phi^0_1(b)) - (-1)^{|a||b|} \Phi_1^1(b)(\Phi^0_1(a))\\
& + \Phi^0_2(da,b) + (-1)^{|b|} \Phi^0_2(a,db) + \mu^1(\Phi^0_2(a,b)) =0 
\end{align*}
\end{proposition}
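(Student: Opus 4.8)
The plan is to prove Proposition \ref{second} by the same moduli-space-degeneration technique used in Propositions \ref{deg1}, \ref{lie}, and \ref{multip}, now applied to the moduli space $\mathcal{M}$ of stable disks carrying two interior marked points (each equipped with a tangent direction) and a single boundary point, which is declared to be the output. The target of the associated operations lands in $CW^*(L,L)$ for a single Lagrangian $L$, since there is only one boundary marked point; abstractly one should think of this as computing a piece of the identity $\mathcal{CO}(\, \cdot\, )$ applied to a Gerstenhaber bracket, i.e. the failure of $\Phi^0_1$ to intertwine $[\,,\,]$ on $SC^*(M)$ with the natural operation on $CW^0(L,L)$, corrected by the secondary operation $\Phi^0_2$ (the two-interior-input analogue of $\Phi^0_1$). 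The key input is that $\mathcal{M}$ is a smooth $1$-manifold with boundary whose compactification $\overline{\mathcal{M}}$ contributes precisely the six terms listed. First I would fix the consistent perturbation datum $\mathcal{P}$, set up the parametrized moduli problem (the conformal parameter here varies over the real blow-up of the Deligne-Mumford-Kimura-Stasheff-Voronov compactification, as discussed in [\cite{seidelsphere}, Section 5] and \cite{seidelsol}), and check transversality so that the count of rigid points is well-defined.

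Next I would enumerate the codimension-one boundary strata of $\overline{\mathcal{M}}$. These come in two flavors. The Gromov-Floer degenerations (breaking off a Floer cylinder at one of the interior punctures, or a Floer strip at the boundary puncture) produce the differential terms: $\Phi^0_2(da,b)$ and $(-1)^{|b|}\Phi^0_2(a,db)$ from cylinder breaking at the interior inputs, and $\mu^1(\Phi^0_2(a,b))$ from strip breaking at the output. The domain-degeneration strata, governed by the Kimura-Stasheff-Voronov compactification of disks with interior marked points, are: the two interior points colliding and bubbling off a three-punctured sphere --- exactly as in the proof of Proposition \ref{lie}, where the collision forces the tangent vectors into the $1$-dimensional homology class defining the bracket, so this stratum contributes $\Phi^0_1([a,b])$ --- and the two strata where one interior point approaches the boundary, bubbling off a disk with one interior input and one boundary output whose output is then fed as the (unique) boundary input to a residual disk carrying the other interior point; these give $-\Phi^1_1(a)(\Phi^0_1(b))$ and $-(-1)^{|a||b|}\Phi^1_1(b)(\Phi^0_1(a))$, the sign and ordering reflecting which interior point bubbles off first.

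The remaining task is the sign bookkeeping, which I would carry out following the orientation conventions of [\cite{seidelsol}, Section 8]: one orients the determinant lines over $\mathcal{M}$, tracks the induced boundary orientations at each stratum, and compares with the fixed orientations on the determinant lines of the broken/bubbled configurations. Summing the signed boundary contributions of the compact $1$-manifold $\overline{\mathcal{M}}$ to zero yields the stated identity. I expect the main obstacle to be precisely this sign verification together with confirming that no other codimension-one strata appear --- in particular that there is no stratum where the two interior points bubble off together \emph{to the boundary} (which would require a separate analysis of sphere bubbles limiting onto the boundary, ruled out by the structure of the real-blown-up compactification) and that the $S^1$ of relative phases in the interior collision genuinely sweeps out the full homology class used to define $[\,,\,]$, so that the coefficient of the $\Phi^0_1([a,b])$ term is exactly $1$. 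Once these points are settled, the proposition follows exactly as its predecessors; accordingly I would state the proof tersely, referencing Proposition \ref{lie} and [\cite{seidelsol}, pg. 8, Section 8] for the curve-theoretic and sign details.
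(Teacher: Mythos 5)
Your proposal is correct and follows essentially the same route as the paper: the paper obtains Proposition \ref{second} exactly by analyzing the codimension-one degenerations of the index one moduli space of stable disks with two interior punctures (with tangent directions) and one boundary output, with the interior collision producing the $\Phi^0_1([a,b])$ term as in Proposition \ref{lie}, interior points escaping to the boundary producing the $\Phi^1_1(\cdot)(\Phi^0_1(\cdot))$ terms, Gromov--Floer breaking producing the $d$ and $\mu^1$ terms, and signs handled via the conventions of Seidel--Solomon. Your enumeration of strata and sign strategy matches the paper's (largely implicit) argument, so no changes are needed.
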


Now, as before, consider cocycles $g_\alpha, g_\beta
\in SC^1(M)$ and $x \in CW^*(K,L)$ for $\g$-invariant Lagrangians with $c_L$, $c_K$. Recall that this means $\Phi^0_1(g_\alpha) = \mu^1(c_L(g_\alpha))$ or $\mu^1(c_K(g_\alpha))$ and $\Phi^0_1(g_\beta) =\mu^1(c_L(\beta))$ or $\mu^1(c_K(g_\beta))$.
 
We are now ready to compute:
\begin{align*}
& \ \ \ \ \ \  \rho_\mathcal{P}([g_\alpha,g_\beta])(x) -
\rho_{\mathcal{P}}(g_\alpha) \circ \rho_{\mathcal{P}}(g_\beta) (x) +
\rho_{\mathcal{P}}(g_\beta) \circ \rho_{\mathcal{P}}(g_\alpha) (x) \\
& \\
&= \Phi^1_1([g_\alpha,g_\beta])(x)  - \Phi^1_1 (g_\alpha) \circ \Phi^1_1(g_\beta)(x) + \Phi^1_1(g_\beta) \circ \Phi^1_1(g_\alpha)(x) \\
&+ \Phi^1_1(g_\alpha) (\mu^2(c_L(g_\beta), x)) - \mu^2(c_L(g_\beta), \Phi^1_1(g_\alpha)(x)) -\Phi^1_1(g_\beta) (\mu^2(c_L(g_\alpha), x)) + \mu_2(c_L(g_\alpha), \Phi^1_1(g_\beta) (x)) \\
& - \Phi^1_1(g_\alpha) (\mu^2(x,c_K(g_\beta))) + \mu^2(\Phi^1_1(g_\alpha)(x), c_K(g_\beta)) + \Phi^1_1(g_\beta) (\mu^2(x,c_K(g_\alpha))) - \mu^2(\Phi^1_1(g_\beta)(x), c_K(g_\alpha)) \\ 
&- \mu^2(c_L([g_\alpha, g_\beta]),x) + \mu^2(x,c_K([g_\alpha,g_\beta])) \\ 
& -\mu^2(c_L(g_\alpha), \mu^2(c_L(g_\beta),x)) +\mu^2(c_L(g_\alpha), \mu^2(x,c_K(g_\beta)) + \mu^2(\mu^2(c_L(g_\beta),x),c_K(g_\alpha))\\ 
& - \mu^2(\mu^2(x,c_K(g_\beta)),c_K(g_\alpha)) +\mu^2(c_L(g_\beta), \mu^2(c_L(g_\alpha),x)) -\mu^2(c_L(g_\beta), \mu^2(x,c_K(g_\alpha)) \\ 
& -\mu^2(\mu^2(c_L(g_\alpha),x),c_K(g_\beta)) + \mu^2(\mu^2(x,c_K(g_\alpha)),c_K(g_\beta)) 
\end{align*}
apply Proposition \ref{lie} to get:
\begin{align*}
& = -\Phi^2_1 (g_\alpha)(\Phi^0_1(g_\beta), x) + (-1)^{|x|} \Phi^2_1(g_\alpha)(x,\Phi^0_1(g_\beta)) + \Phi^2_1(g_\beta)(\Phi^0_1(g_\alpha),x) \\ 
& - (-1)^{|x|} \Phi^2_1(g_\beta)(x,\Phi^0_1(g_\alpha)) + \mu^2( \Phi_2^0 (g_\alpha,g_\beta) ,x ) - \mu^2(x, \Phi_2^0(g_\alpha,g_\beta)) \\
&+ \Phi^1_1(g_\alpha) (\mu^2(c_L(g_\beta), x)) - \mu^2(c_L(g_\beta), \Phi^1_1(g_\alpha)(x)) -\Phi^1_1(g_\beta) (\mu^2(c_L(g_\alpha), x)) + \mu_2(c_L(g_\alpha), \Phi^1_1(g_\beta) (x)) \\
& - \Phi^1_1(g_\alpha) (\mu^2(x,c_K(g_\beta))) + \mu^2(\Phi^1_1(g_\alpha)(x),
c_K(g_\beta)) + \Phi^1_1(g_\beta) (\mu^2(x,c_K(g_\alpha))) -
\mu^2(\Phi^1_1(g_\beta)(x), c_K(g_\alpha)) \\  
&- \mu^2(c_L([g_\alpha, g_\beta]),x) + \mu^2(x,c_K([g_\alpha,g_\beta])) \\
& -\mu^2(c_L(g_\alpha), \mu^2(c_L(g_\beta),x)) +\mu^2(c_L(g_\alpha), \mu^2(x,c_K(g_\beta)) + \mu^2(\mu^2(c_L(g_\beta),x),c_K(g_\alpha))\\ 
& - \mu^2(\mu^2(x,c_K(g_\beta)),c_K(g_\alpha)) +\mu^2(c_L(g_\beta), \mu^2(c_L(g_\alpha),x)) -\mu^2(c_L(g_\beta), \mu^2(x,c_K(g_\alpha)) \\ 
& -\mu^2(\mu^2(c_L(g_\alpha),x),c_K(g_\beta)) + \mu^2(\mu^2(x,c_K(g_\alpha)),c_K(g_\beta)) + \text{coboundary.} 
\end{align*}
use $\g$-invariance of $K$ and $L$ and reorganize terms to obtain, 
\begin{align*}
& = -\Phi^2_1 (g_\alpha)(\mu^1(c_L(g_\beta)), x) + \Phi^1_1(g_\alpha) (\mu^2(c_L(g_\beta), x)) - \mu^2(c_L(g_\beta), \Phi^1_1(g_\alpha)(x)) \\
&+ \Phi^2_1(g_\beta)(\mu^1(c_L(g_\alpha)),x)  -\Phi^1_1(g_\beta) (\mu^2(c_L(g_\alpha), x)) + \mu_2(c_L(g_\alpha), \Phi^1_1(g_\beta) (x)) \\
&+ (-1)^{|x|} \Phi^2_1(g_\alpha)(x,\mu^1(c_K(g_\beta))) - \Phi^1_1(g_\alpha) (\mu^2(x,c_K(g_\beta))) + \mu^2(\Phi^1_1(g_\alpha)(x), c_K(g_\beta)) \\
&- (-1)^{|x|} \Phi^2_1(g_\beta)(x,\mu^1(c_K(g_\alpha))) + \Phi^1_1(g_\beta) (\mu^2(x,c_K(g_\alpha))) - \mu^2(\Phi^1_1(g_\beta)(x), c_K(g_\alpha)) \\
&- \mu^2(c_L([g_\alpha, g_\beta]),x) + \mu^2(x,c_K([g_\alpha,g_\beta])) \\ 
&+ \mu^2( \Phi_2^0 (g_\alpha,g_\beta) ,x ) - \mu^2(x, \Phi_2^0(g_\alpha,g_\beta)) \\
& -\mu^2(c_L(g_\alpha), \mu^2(c_L(g_\beta),x)) +\mu^2(c_L(g_\alpha), \mu^2(x,c_K(g_\beta)) + \mu^2(\mu^2(c_L(g_\beta),x),c_K(g_\alpha))\\ 
& - \mu^2(\mu^2(x,c_K(g_\beta)),c_K(g_\alpha)) +\mu^2(c_L(g_\beta), \mu^2(c_L(g_\alpha),x)) -\mu^2(c_L(g_\beta), \mu^2(x,c_K(g_\alpha)) \\ 
& -\mu^2(\mu^2(c_L(g_\alpha),x),c_K(g_\beta)) + \mu^2(\mu^2(x,c_K(g_\alpha)),c_K(g_\beta)) + \text{coboundary}. \end{align*}
use Proposition \ref{multip},
\begin{align*}
&= \mu^2(\Phi^1_1(g_\alpha)(c_L(g_\beta)),x) + \mu^3(\mu^1(c_L(g_\alpha)),c_L(g_\beta),x) \\ 
& \ \ \ \ \ \ \ \ \ \ \ \ \ \ \ \ \ \ \ \ \ \ \ \ \ \ \ \ \ \ \ \ \ \ \ \ \ \ \  - \mu^3(c_L(g_\beta),\mu^1(c_L(g_\alpha)),x) + (-1)^{|x|} \mu^3(c_L(g_\beta),x, \mu^1(c_K(g_\alpha)))   \\
& - \mu^2(\Phi^1_1(g_\beta)(c_L(g_\alpha)),x) - \mu^3(\mu^1(c_L(g_\beta)),c_L(g_\alpha),x) \\ 
&  \ \ \ \ \ \ \ \ \ \ \ \ \ \ \ \ \ \ \ \ \ \ \ \ \ \ \ \ \ \ \ \ \ \ \ \ \ \ \  + \mu^3(c_L(g_\alpha),\mu^1(c_L(g_\beta)),x) -(-1)^{|x|} \mu^3(c_L(g_\alpha),x, \mu^1(c_K(g_\beta))) \\
& -\mu^2(x,\Phi^1_1(g_\alpha)(c_K(g_\beta))) - \mu^3(\mu^1(c_L(g_\alpha)), x, c_K(g_\beta)) \\  
& \ \ \ \ \ \ \ \ \ \ \ \ \ \ \ \ \ \ \ \ \ \ \ \ \ \ \ \ \ \ \ \ \ \ \ \ \ \ \  +(-1)^{|x|} \mu^3(x,\mu^1(c_K(g_\alpha)), c_K(g_\beta)) -(-1)^{|x|} \mu^3(x,c_K(g_\beta),\mu^1(c_K(g_\alpha)))   \\
&+ \mu^2(x,\Phi^1_1(g_\beta)(c_K(g_\alpha))) + \mu^3(\mu^1(c_L(g_\beta)), x, c_K(g_\alpha)) \\
& \ \ \ \ \ \ \ \ \ \ \ \ \ \ \ \ \ \ \ \ \ \ \ \ \ \ \ \ \ \ \ \ \ \ \ \ \ \ \ -(-1)^{|x|} ( \mu^3(x,\mu^1(c_K(g_\beta)), c_K(g_\alpha)) +(-1)^{|x|} \mu^3(x,c_K(g_\alpha),\mu^1(c_K(g_\beta)))    \\
&- \mu^2(c_L([g_\alpha, g_\beta]),x) + \mu^2(x,c_K([g_\alpha,g_\beta])) \\ 
&+ \mu^2( \Phi_2^0 (g_\alpha,g_\beta) ,x ) - \mu^2(x, \Phi_2^0(g_\alpha,g_\beta)) \\
& -\mu^2(c_L(g_\alpha), \mu^2(c_L(g_\beta),x)) +\mu^2(c_L(g_\alpha), \mu^2(x,c_K(g_\beta)) + \mu^2(\mu^2(c_L(g_\beta),x),c_K(g_\alpha))\\ 
& - \mu^2(\mu^2(x,c_K(g_\beta)),c_K(g_\alpha)) +\mu^2(c_L(g_\beta), \mu^2(c_L(g_\alpha),x)) -\mu^2(c_L(g_\beta), \mu^2(x,c_K(g_\alpha)) \\ 
& -\mu^2(\mu^2(c_L(g_\alpha),x),c_K(g_\beta)) + \mu^2(\mu^2(x,c_K(g_\alpha)),c_K(g_\beta)) + \text{coboundary},
\end{align*}

\begin{align} \label{halfway} 
\notag
&= \mu^2(\Phi^1_1(g_\alpha)(c_L(g_\beta)),x) - \mu^2(\Phi^1_1(g_\beta)(c_L(g_\alpha)),x)  \\ \notag
&- \mu^2(x,\Phi^1_1(g_\alpha)(c_K(g_\beta))) + \mu^2(x,\Phi^1_1(g_\beta)(c_K(g_\alpha)))  \\ \notag
&- \mu^2(\mu^2(c_L(g_\alpha),c_L(g_\beta)),x) + \mu^2(\mu^2(c_L(g_\beta),c_L(g_\alpha)),x) \\ 
& +\mu^2(x, \mu^2(c_K(g_\alpha),c_K(g_\beta))) -\mu^2(x, \mu^2(c_K(g_\beta),c_K(g_\alpha))) \\ \notag
&- \mu^2(c_L([g_\alpha, g_\beta]),x) + \mu^2(x,c_K([g_\alpha,g_\beta])) \\ \notag
&- \mu^2( \Phi_2^0 (g_\alpha,g_\beta) ,x ) + \mu^2(x, \Phi_2^0(g_\alpha,g_\beta)) + \text{coboundary},
\end{align}

where in the last equality, we repeatedly used the $A_\infty$ relation: 
\begin{align*}
&\mu^1(\mu^3(x_3,x_2,x_1)) + \mu^3(\mu^1(x_3),x_2,x_1) + (-1)^{|x_3|} \mu^3(x_3,\mu^1(x_2),x_1)\\
& + (-1)^{|x_3|+|x_2|} \mu^3(x_3,x_2,\mu^1(x_1)) = \mu^2(x_3,\mu^2(x_2,x_1)) - \mu^2(\mu^2(x_3,x_2),x_1)) 
\end{align*}

To understand the last expression that we obtained, we pause to derive another related formula. We deduce from Proposition \ref{second} that:
\begin{align*}
\Phi^0_1([g_\alpha,g_\beta]) - \Phi^1_1(g_\alpha)(\Phi^0_1(g_\beta)) + \Phi^1_1(g_\beta)(\Phi^0_1(g_\alpha)) + \mu^1(\Phi^0_2(g_\alpha,g_\beta)) = 0
\end{align*}
Using $\g$-invariance of $L$, we obtain:
\begin{align*}
\mu^1 c_L([g_\alpha,g_\beta]) - \Phi^1_1(g_\alpha)(\mu^1(c_L(g_\beta))) +\Phi^1_1(g_\beta)(\mu^1(c_L(g_\alpha))) + \mu^1(\Phi^0_2(g_\alpha,g_\beta))=0
\end{align*}
We can now apply Proposition \ref{deg1} to deduce:
\begin{align*}
&\mu^1 c_L([g_\alpha,g_\beta]) - \mu^1 \Phi^1_1(g_\alpha)(c_L(g_\beta)) + \mu^1 \Phi^1_1(g_\beta)(c_L(g_\alpha)) \\  &+ \mu^1(\mu^2(c_L(g_\alpha), c_L(g_\beta))) - \mu^1(\mu^2(c_L(g_\beta),c_L(g_\alpha)) + \mu^1(\Phi^0_2(g_\alpha,g_\beta))=0
\end{align*} 
where we also used the Leibniz rule. 

In view of this last equality, we make the definition that is most central to this paper:

\begin{definition} \label{equivariant} Let $\g$ be a non-zero sub-Lie algebra of $SH^1(M)$. Choose an additive basis $\{ [g_\alpha] \}_\alpha$ of $\g$ over $\f{K}$, where we reserve the notation $g_\alpha$ for a choice of a cochain in $SC^1(M)$ representing $[g_\alpha] \in SH^1(M)$. Let $(L,c_L)$ be a $\g$-invariant Lagrangian, i.e., for all $g_\alpha$: \[ \Phi^0_1(g_\alpha) = \mu^1 (c_L(g_\alpha))\]
We say that $(L,c_L)$ is \emph{$\g$-equivariant} if for all $g_\alpha, g_\beta$, the cocycle :
\[ c_L([g_\alpha,g_\beta])-\Phi^1_1(g_\alpha)(c_L(g_\beta)) + \Phi^1_1(g_\beta)(c_L(g_\alpha)) + \mu^2(c_L(g_\alpha),c_L(g_\beta)) - \mu^2(c_L(g_\beta),c_L(g_\alpha))+ \Phi^0_2(g_\alpha,g_\beta) \]
is a coboundary.
\end{definition}

Now, for $K$ and $L$, $\g$-equivariant Lagrangians, we can continue our previous computation from formula (\ref{halfway}), and conclude from Leibniz rule that:
\begin{align*}
& \rho_\mathcal{P}([g_\alpha,g_\beta])(x) -
\rho_{\mathcal{P}}(g_\alpha) \circ \rho_{\mathcal{P}}(g_\beta) (x) +
\rho_{\mathcal{P}}(g_\beta) \circ \rho_{\mathcal{P}}(g_\alpha) (x) \\
&= \text{\ coboundary} 
\end{align*}

Since this is of importance, we record it as:

\begin{theorem} Let $K,L$ be $\g$-equivariant Lagrangians in the sense of Definition \ref{equivariant}, then 
\[ \rho_\mathcal{P} \colon \g \to End_{\f{K}}(HW^* (K,L)) \]  is a Lie algebra homomorphism. \end{theorem}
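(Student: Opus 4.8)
The plan is to carry out precisely the long chain of algebraic manipulations that the excerpt has been setting up, and to verify that the final residual term vanishes up to coboundary under the $\g$-equivariance hypothesis. Concretely, I would start from the three-term expression
\[
\rho_\mathcal{P}([g_\alpha,g_\beta])(x) - \rho_{\mathcal{P}}(g_\alpha)\circ\rho_{\mathcal{P}}(g_\beta)(x) + \rho_{\mathcal{P}}(g_\beta)\circ\rho_{\mathcal{P}}(g_\alpha)(x),
\]
expand each $\rho_\mathcal{P}(g_\bullet)$ using its defining formula from Corollary \ref{linear}, and then substitute in turn Proposition \ref{lie}, the $\g$-invariance relations $\Phi^0_1(g_\alpha)=\mu^1(c_L(g_\alpha))$ (and the $c_K$ versions), Proposition \ref{multip}, and the $A_\infty$ relation for $\mu^3$. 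This is exactly the computation displayed in the excerpt leading to formula (\ref{halfway}). The output of that reduction is an expression all of whose terms are of the shape $\mu^2(\,\cdot\,,x)$ or $\mu^2(x,\,\cdot\,)$, where the inner slots are occupied by the cocycles
\[
c_L([g_\alpha,g_\beta]) - \Phi^1_1(g_\alpha)(c_L(g_\beta)) + \Phi^1_1(g_\beta)(c_L(g_\alpha)) + \mu^2(c_L(g_\alpha),c_L(g_\beta)) - \mu^2(c_L(g_\beta),c_L(g_\alpha)) + \Phi^0_2(g_\alpha,g_\beta)
\]
and its $K$-analogue.

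Next I would invoke Definition \ref{equivariant}: since $K$ and $L$ are $\g$-equivariant, each of these two inner cocycles is a coboundary, say $\mu^1(w_L)$ and $\mu^1(w_K)$ respectively. Then the Leibniz rule ($A_\infty$ relation $\mu^1\mu^2(a,b) = \mu^2(\mu^1a,b) \pm \mu^2(a,\mu^1b)$) rewrites $\mu^2(\mu^1(w_L),x) = \mu^1(\mu^2(w_L,x)) \mp \mu^2(w_L,\mu^1 x)$, and similarly for the $K$-slot term. Therefore, modulo the image of $\mu^1$ and modulo terms proportional to $\mu^1(x)$, the whole expression vanishes. Passing to cohomology — where we may assume $\mu^1(x)=0$ and we quotient by $\operatorname{im}\mu^1$ — we conclude that $\rho_\mathcal{P}([g_\alpha,g_\beta]) = [\rho_\mathcal{P}(g_\alpha),\rho_\mathcal{P}(g_\beta)]$ as endomorphisms of $HW^*(K,L)$. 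One should also remark that both sides are already well-defined $\f{K}$-linear endomorphisms (independent of the lifts $g_\alpha$) by Corollary \ref{linear}, so checking the Lie bracket relation on the chosen basis $\{[g_\alpha]\}$ and extending bilinearly suffices to establish that $\rho_\mathcal{P}$ is a Lie algebra homomorphism.

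The main obstacle is bookkeeping rather than conceptual: one must track the signs through Propositions \ref{lie}, \ref{multip}, and \ref{second}, the repeated applications of the $\mu^3$ $A_\infty$ relation, and the Leibniz rule, ensuring that every term produced in the expansion of $\rho_\mathcal{P}([g_\alpha,g_\beta])(x) - \rho_{\mathcal{P}}(g_\alpha)\circ\rho_{\mathcal{P}}(g_\beta)(x) + \rho_{\mathcal{P}}(g_\beta)\circ\rho_{\mathcal{P}}(g_\alpha)(x)$ is accounted for and cancels. Since the $g_\alpha$ lie in $SC^1(M)$ (degree $1$) the sign exponents involving $|g_\alpha|,|g_\beta|$ simplify considerably, which is what makes the telescoping in the excerpt tractable; I would exploit this throughout. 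A secondary point worth addressing is that the identity in Definition \ref{equivariant} was derived (via Proposition \ref{second} and Proposition \ref{deg1}) only after applying $\mu^1$, so one must be slightly careful that it is the \emph{cocycle itself}, not merely its image under $\mu^1$, that is required to be a coboundary — but this is exactly how Definition \ref{equivariant} is phrased, so no gap arises. With these conventions fixed, the computation closes and the theorem follows.
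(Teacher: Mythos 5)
Your proposal is correct and follows essentially the same route as the paper: expand the bracket defect of $\rho_\mathcal{P}$, reduce via Propositions \ref{lie}, \ref{multip}, the $\g$-invariance relations and the $\mu^3$ $A_\infty$ relation to formula (\ref{halfway}), observe that the surviving terms group as $-\mu^2(\,\cdot\,,x)$ and $\mu^2(x,\,\cdot\,)$ applied to the equivariance cocycles for $L$ and $K$, and then use Definition \ref{equivariant} together with the Leibniz rule to conclude the whole expression is a coboundary. Your closing remark — that Definition \ref{equivariant} demands the cocycle itself, not merely its $\mu^1$-image, be a coboundary — is exactly the point the paper's definition is designed to capture, so no gap arises.
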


If one assumes the somewhat unnatural condition that $\Phi^0_1(g_\alpha)$ is
identically zero (for ex. this holds if $CW^1(L,L)$ vanishes), then it follows
that $L$ can be made invariant by picking an arbitrary cocycle $c_L(g_\alpha)$
for each $g_\alpha$. In this case, each term that appear in the definition of
$\g$-equivariance are individually cocycles.  Therefore, checking
$\g$-equivariance for a pair $(L,c_L)$ is significantly simpler - each term can
be computed at the level of cohomology. In fact, in this case, it makes sense
to impose a somewhat weaker assumption on the pair $(K,L)$ in order to conclude
that $\rho_\mathcal{P}$ is a Lie algebra homomorphism. Observe that when
$\Phi^0_1(g_\alpha)$ is zero for all $g_\alpha$, it follow from Proposition
\ref{second} that $\Phi^0_2(g_\alpha,g_\beta)$ is a cocycle. Now, we can make
the following definition:

\begin{definition}\label{def:equivariant-pair} Let $(K,c_K), (L,c_L)$ be $\g$-invariant Lagrangians such that $\Phi^0_1(g_\alpha)$ is identically zero for all $g_\alpha$ for both $K$ and $L$. Then, we say $(K,L)$ is \emph{$\g$-equivariant as a pair} if for all $g_\alpha, g_\beta \in \g$ and $x \in CW^*(K,L)$, the cocycles :
\[ c_K([g_\alpha,g_\beta])-\Phi^1_1(g_\alpha)(c_K(g_\beta)) + \Phi^1_1(g_\beta)(c_K(g_\alpha)) + \mu^2(c_K(g_\alpha),c_K(g_\beta)) - \mu^2(c_K(g_\beta),c_K(g_\alpha)) \]
and,
\[ c_L([g_\alpha,g_\beta])-\Phi^1_1(g_\alpha)(c_L(g_\beta)) + \Phi^1_1(g_\beta)(c_L(g_\alpha)) + \mu^2(c_L(g_\alpha),c_L(g_\beta)) - \mu^2(c_L(g_\beta),c_L(g_\alpha)) \]
and,
\[\mu^2(\Phi^0_2(g_\alpha,g_\beta),x) - \mu^2(x,\Phi^0_2(g_\alpha,g_\beta))\]
are coboundaries.
\end{definition}

Note, of course, that the first two conditions can be satisfied by picking $c_L=c_K=0$. It follows again from formula (\ref{halfway}) that $\rho_\mathcal{P}$ is a Lie algebra homomorphism when $(K,L)$ is a $\g$-equivariant pair. 

\begin{remark} As in the previous situation, suppose that $\Phi^0_1$ is
identically zero so that $c_L(g_\alpha)$ is a cocycle for all $g_\alpha$. In
addition, suppose that the $HW^0(L,L)$ is commutative with respect to the
$\mu^2$-product. Then formula (\ref{halfway}) simplifies drastically at the level
of cohomology for $K=L$. In other words, in this case, for any choice of $c_L$,
we get a Lie algebra homomorphism. $\rho_\mathcal{P}: \g \to End_{\f{K}}(HW^0(L,L))$.
\end{remark} 

\section{Symplectic cohomology and vector fields}
\label{sec:sh-vf}

\subsection{General observations}
\label{sec:observations}

In this section we collect some general observations that indicate how natural topological structures on symplectic cohomology may correspond to structures that are also found in the representation theory of semisimple Lie algebras, such as the Cartan subalgebra and the root lattice. The propositions in this subsection are true for an arbitrary Liouville manifold $U$, and are useful in that generality, although the connection to representation theory should only be expected when $U$ is the mirror of a homogeneous space.

Let $U$ be a Liouville manifold. Let $\mathfrak{h}$ denote the image of the canonical map $H^1(U) \to SH^1(U)$. Let $\Lambda = H_1(U;\Z)$ denote the integral first homology of $U$.
\begin{proposition}
  $\mathfrak{h}$ is an abelian Lie subalgebra of $SH^1(U)$.
\end{proposition}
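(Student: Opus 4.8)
The plan is to deduce the statement directly from the BV identity
\[ [x,y] = \Delta( x \cup y ) - \Delta(x) \cup y - (-1)^{|x|} x \cup \Delta(y) \]
recorded above, together with the two facts already established in Section~\ref{sec2-1}: the canonical map $H^*(U) \to SH^*(U)$ is a unital \emph{ring} homomorphism, and $\Delta$ vanishes identically on its image.

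First I would note that $\mathfrak{h}$ is a $\f{K}$-submodule of $SH^1(U)$, being by definition the image of the $\f{K}$-linear map $H^1(U) \to SH^1(U)$; call this map $\iota$. To see that $\mathfrak{h}$ is a Lie subalgebra it then suffices to check that the Gerstenhaber bracket of any two of its elements lies in $\mathfrak{h}$, and in fact I claim it is zero. Take $x = \iota(a)$ and $y = \iota(b)$ with $a,b \in H^1(U)$. Since $\iota$ is a ring homomorphism, $x \cup y = \iota(a \cup b)$ also lies in the image of $\iota$ (now coming from $H^2(U)$). Hence $\Delta(x) = \Delta(y) = \Delta(x \cup y) = 0$, and substituting into the BV identity above (with $|x| = 1$) gives $[x,y] = 0$.

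Therefore $\mathfrak{h}$ is trivially closed under the bracket, and the bracket restricted to $\mathfrak{h}$ is identically zero, i.e. $\mathfrak{h}$ is an abelian Lie subalgebra of $(SH^1(U), [\,,\,])$. I do not expect any genuine obstacle here; the only point that deserves a moment's care is the identification $x \cup_{SH} y = \iota(a \cup_{H^*(U)} b)$, which is precisely the assertion that $H^*(U) \to SH^*(U)$ respects products, and the compatibility of the degree bookkeeping so that the bracket indeed lands back in degree $1$.
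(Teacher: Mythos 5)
Your argument is correct and is essentially the paper's own proof: both apply the BV identity to elements in the image of $H^1(U)$, use that this image is closed under the product (landing in the image of $H^2(U)$) and that $\Delta$ vanishes there, concluding that the bracket is identically zero on $\mathfrak{h}$. Nothing to add.
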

\begin{proof}
  This is an immediate consequence of the fact that the BV operator $\Delta$ vanishes on the image of $H^*(U)$ in $SH^*(U)$. For if $a,b \in \mathfrak{h}$, we have
  \begin{equation}
    [a,b] = \Delta(a\cup b) - \Delta(a) \cup b + a \cup \Delta(b)
  \end{equation}
  Since $a$ and $b$ are in the image of $H^1(U)$, and $a \cup b$ is in the image of $H^2(U)$, all the terms are zero.
\end{proof}

For $\alpha \in \Lambda = H_1(U;\Z)$, let $SC^*(U)_\alpha$ denote the subspace of $SC^*(U)$ spanned by periodic orbits $\gamma$ such that $[\gamma] = \alpha$.
\begin{proposition}
  The decomposition $SC^*(U) = \bigoplus_{\alpha \in \Lambda}SC^*(U)_\alpha$ is a grading by $\Lambda = H_1(U;\Z)$, such that the differential, product, BV operator, and Lie bracket are homogeneous. 
\end{proposition}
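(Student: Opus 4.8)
The plan is to check directly that each operation on $SC^*(U)$ respects the decomposition indexed by $\alpha \in \Lambda = H_1(U;\Z)$, by tracking homology classes through the relevant moduli spaces of holomorphic curves. First I would observe that the decomposition $SC^*(U) = \bigoplus_{\alpha} SC^*(U)_\alpha$ is well-defined because each generator of $SC^*(U)$ is (a determinant line attached to) a $1$-periodic orbit $\gamma$, which is in particular a loop in $U$ and hence has a well-defined class $[\gamma] \in H_1(U;\Z)$; the time-dependent perturbation $h_t$ is compactly supported and affects only the parametrization, not the homology class. Since the continuation maps are also defined by counting cylinders, the same argument as below shows the decomposition is preserved under $\kappa_m^{m'}$, so it descends to the hocolim model for $SC^*(U)$.

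The key point for each operation is a homological bookkeeping argument. For the differential $d$: a Floer trajectory $u:\f{R}\times S^1 \to U$ asymptotic to $x_-$ at $-\infty$ and $x_+$ at $+\infty$ gives a homotopy (in $U$) between the loops $x_-$ and $x_+$, so $[x_-] = [x_+]$ in $H_1(U;\Z)$; hence $d$ preserves each $SC^*(U)_\alpha$. For the product $\cup$ and more generally the pair-of-pants-type operations: the relevant domain is a sphere with three punctures (two inputs, one output), and a finite-energy solution $u$ restricted to such a domain exhibits the output orbit as homologous to the concatenation of the two input orbits — capping off with the small cylinders near the punctures, the image of $u$ furnishes a $2$-chain whose boundary is $\gamma_{out} - \gamma_{in,1} - \gamma_{in,2}$ — so $\cup$ sends $SC^*(U)_\alpha \otimes SC^*(U)_\beta$ into $SC^*(U)_{\alpha+\beta}$. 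For the BV operator $\Delta$: the domain is a cylinder (with a rotating asymptotic marker), so again input and output orbits are homologous and $\Delta$ is homogeneous of degree $0 \in \Lambda$. The Lie bracket $[\,,\,]$ is built from a parametrized family of three-punctured spheres (as recalled in the proof of Proposition~\ref{lie}), so the same two-input/one-output cobounding argument shows $[\,,\,]$ sends $SC^*(U)_\alpha \otimes SC^*(U)_\beta$ to $SC^*(U)_{\alpha+\beta}$; alternatively this follows formally from homogeneity of $\cup$ and $\Delta$ together with the identity $[x,y] = \Delta(x\cup y) - \Delta(x)\cup y - (-1)^{|x|} x \cup \Delta(y)$.

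I would organize the writeup around a single lemma: for any moduli space of finite-energy maps from a (possibly nodal, possibly varying) punctured Riemann surface $\Sigma$ to $U$ used to define one of these operations, with inputs asymptotic to orbits $\gamma_{i}$ and outputs to orbits $\delta_j$, one has $\sum_j [\delta_j] = \sum_i [\gamma_i]$ in $H_1(U;\Z)$; this is because the image of $u$, together with half-infinite cylindrical caps at the punctures, is a singular $2$-chain in $U$ realizing the asserted relation on boundaries. Applying this with the appropriate $\Sigma$ to $d$, $\cup$, $\Delta$, and $[\,,\,]$ yields all four homogeneity statements at once, and it shows the claimed assignment is a genuine grading (i.e.\ compatible with the algebraic structure maps, not merely a direct-sum decomposition). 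No step here presents a real obstacle — the content is entirely the elementary observation that holomorphic curves in $U$ cobound their asymptotics — so the main task is simply to state the cobounding lemma cleanly and to note that the compactly-supported nature of all perturbations means none of the auxiliary Hamiltonian or almost-complex-structure data can alter homology classes.
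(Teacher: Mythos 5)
Your proposal is correct and takes essentially the same route as the paper: the paper's proof is the one-sentence observation that all these operations count maps of punctured Riemann surfaces asymptotic to periodic orbits, and any such map is itself a $2$-chain witnessing that the output class equals the sum of the input classes (e.g.\ a pair of pants witnesses $[\gamma_3]=[\gamma_1]+[\gamma_2]$), which is exactly your cobounding lemma. Your writeup merely spells this out case by case (differential, product, $\Delta$, bracket, continuation maps), which is fine; the only stray remark is that well-definedness of the decomposition needs no appeal to compact support of $h_t$, since each generator is a loop and so has a class regardless.
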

\begin{proof}
  This is an immediate consequence of the fact that the operations are defined using maps of punctured Riemann surfaces that are asymptotic to periodic orbits. For instance, a pair of pants contributing to the coefficient of $\gamma_3$ in $\gamma_1 \cup \gamma_2$ is a homology witnessing the relation $[\gamma_3] = [\gamma_1] + [\gamma_2]$.
\end{proof}

\begin{definition}
  Let $V$ be a vector space, and let $A$ be an abelian group. A \emph{relative grading} of $V$ by $A$ is a decomposition 
  \begin{equation}
    V = \bigoplus_{\beta \in I} V_\beta
  \end{equation}
  where the set $I$ indexing the summands is given the structure of an $A$-torsor. If $V$ carries a relative grading by $A$, and we are given two pure elements $v_1, v_2$, with $v_i \in V_{\beta_{i}}$, then the difference $\beta_2 - \beta_1$ is a well-defined element of $A$ called the \emph{relative grading difference between $v_1$ and $v_2$}.
\end{definition}

A natural example of a relative grading is given as follows. Let $K$ and $L$ be connected and simply connected subspaces of a connected space $U$. Let $\mathcal{P} (K,L)$ denote the space of paths starting on $K$ and ending on $L$. If we are given two paths $v_1, v_2 \in \mathcal{P} (K,L)$, we may construct a loop in $U$, first following $v_1$, then any path on $L$ joining the end point of $v_1$ to the end point of $v_2$, then following $v_2$ in the reverse direction, then any path on $K$ joining the start point of $v_2$ to the start point of $v_1$. Since $K$ and $L$ were assumed simply connected, class of this loop in $H_1(U;\Z)$ (and indeed, its free homotopy class) is well-defined. We define an equivalence relation on $\mathcal{P} (K,L)$ by declaring $v_1 \sim v_2$ if the associated class in $H_1(U;\Z)$ vanishes. Let $I$ be the set of equivalence classes. The group $H_1(U;\Z)$ acts on $I$, since we may compose a path from $K$ to $L$ with a loop based at the start point. This action is free and transitive, so $I$ is a $H_1(U;\Z)$-torsor. If $C \subset \mathcal{P}(K,L)$ is a subset (we have in mind the set of Hamiltonian chords from $K$ to $L$), and $V = \K\langle C \rangle$ is the vector space with a basis given by $C$, then $V$ admits a relative grading by $H_1(U;\Z)$.

\begin{proposition}
  Let $K$ and $L$ be connected and simply connected Lagrangians in the Liouville manifold $U$. The wrapped Floer complex $CW^*(K,L)$ admits a relative grading by $\Lambda = H_1(U;\Z)$. The differential preserves this grading, while the map
  \begin{equation}
    \Phi^1_1 : SC^*(U) \otimes CW^*(K,L) \to CW^*(K,L)
  \end{equation}
  is homogeneous with respect to the absolute grading on $SC^*(U)$ and the relative grading on $CW^*(K,L)$. That is $\Phi^1_1$ maps $SC^*(U)_\alpha \otimes CW^*(K,L)_\beta$ to $CW^*(K,L)_{\alpha+\beta}$.
\end{proposition}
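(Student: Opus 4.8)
The plan is to show that all the relevant structures are compatible with the decomposition of loops/chords by their homology class, exactly as in the two preceding propositions about $SC^*(U)$ and its operations. First I would establish the relative grading on $CW^*(K,L)$: since $K$ and $L$ are connected and simply connected, the construction described in the paragraph before the proposition applies verbatim with $C$ the set of Hamiltonian chords from $K$ to $L$ defining each $CF^*(K,L,m)$, giving a relative grading of each $CF^*(K,L,m)$ by $\Lambda = H_1(U;\Z)$ indexed by an $\Lambda$-torsor $I_{K,L}$. I would then check that the continuation maps $CF^*(K,L,m) \to CF^*(K,L,m')$ respect this relative grading (a continuation strip from a chord $x$ to a chord $y$ is a homotopy rel endpoints on $K$ and $L$, hence witnesses $[x] = [y]$ in $I_{K,L}$), so the relative grading descends to the homotopy colimit $CW^*(K,L)$. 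The same homotopy-rel-boundary argument shows the Floer differential $\mu^1$ is homogeneous of relative degree $0$.

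Next I would address the homogeneity of $\Phi^1_1$. The map $\Phi^1_1$ is defined by counting solutions on a disk $\f{D}$ with one interior input (asymptotic to a periodic orbit $\gamma$ with $[\gamma] = \alpha \in \Lambda$), one boundary input (a chord $x$ with class $\beta \in I_{K,L}$), and one boundary output (a chord $y$). The key point is that such a solution $u$, together with capping data, provides a homotopy in $U$ between the loop-with-$\gamma$-inserted built from the input chord and the output chord; tracking homology classes, the boundary of the domain maps to a loop that realizes the relation $[y] = [x] + [\gamma]$ in the torsor $I_{K,L}$ — that is, the relative grading difference between $y$ and $x$ equals $\alpha$. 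Concretely: the boundary $\partial \f{D}$ minus the punctures consists of two arcs, one mapping to $K$ and one to $L$; together with paths on $K$ and on $L$ closing up the chord endpoints and a small loop around the interior puncture representing $\gamma$, the image of $u|_{\partial \f{D}}$ exhibits the loop defining $[y]-[x] \in \Lambda$ as homologous to $\alpha$. Since $K$ and $L$ are simply connected, the choices of closing paths do not affect the class, so $\Phi^1_1(SC^*(U)_\alpha \otimes CW^*(K,L)_\beta) \subset CW^*(K,L)_{\alpha+\beta}$. One must also check that the continuation maps used to define $\Phi^1_1$ on the homotopy colimit level preserve relative gradings, which is the same argument as for $\mu^1$.

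The main obstacle, such as it is, is purely bookkeeping: one must make sure that the torsor structure on $I_{K,L}$ is the one genuinely seen by Floer theory, i.e.\ that ``gluing a periodic orbit onto a chord'' and ``composing a chord with a loop based at its $K$-endpoint'' induce compatible $\Lambda$-actions, so that the relation read off from $\partial u$ is literally $[y] - [x] = \alpha$ rather than off by a sign or a based-vs-free homotopy ambiguity. This is handled exactly as in the proof of the preceding proposition (a pair of pants contributing to $\gamma_1 \cup \gamma_2$ witnesses $[\gamma_3] = [\gamma_1]+[\gamma_2]$): the disk with an interior puncture is, after filling in the puncture with the capping disk of the orbit and using the simple-connectivity of $K$ and $L$ to contract the boundary closing-paths, precisely such a homology. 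I would therefore present the proof as: (i) recall the relative grading construction and apply it to chords; (ii) continuation maps and $\mu^1$ are relative-degree $0$ by homotopy-rel-boundary; (iii) $\Phi^1_1$ shifts the relative grading by $\alpha$ because the defining domain, capped off, is a homology between the relevant loops — citing the analogous computation in the proof of the $SC^*$ grading proposition for the sign and torsor bookkeeping.
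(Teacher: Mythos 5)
Your proof is correct and follows essentially the same route as the paper's (much terser) argument: the paper simply observes that strips witness equality of relative grading components and that a strip with an interior puncture shows the output chord is homologous to the input chord plus the asymptotic loop. Your elaboration of the torsor bookkeeping, the continuation maps, and the role of simple connectedness just fills in details the paper leaves implicit.
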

\begin{proof}
  As before, this is an immediate consequence of the fact that the operations are defined by maps of Riemann surfaces. The existence of a strip joining two Hamiltonian chords witnesses that they live in the same grading component. The map $\Phi^1_1$ counts strips with a puncture, showing that the output path is homologous to the input path plus the asymptotic loop at the puncture.
\end{proof}

Now we can spell out the expected relationship to representation theory of semisimple Lie algebras.
\begin{enumerate}
\item The subalgebra $\mathfrak{h}$ that is the image of $H^1(U) \to SH^1(U)$ should, as the notation suggests, correspond to the Cartan subalgebra. 
\item The lattice $\Lambda = H_1(U;\Z)$ should correspond to the root lattice, and the grading of $SH^1(U)$ by $\Lambda$ to the grading of $\mathfrak{g}$ by the root lattice. Note that, assuming $H^1(U) \to SH^1(U)$ is injective, $\Lambda \otimes \K \cong \mathfrak{h}^*$.
\item The relative grading of $HW^*(K,L)$ by $\Lambda$ for simply connected Lagrangians $K$ and $L$ should be compared to the fact that any \emph{irreducible} representation of a semisimple Lie algebra admits a grading by the weight lattice, which is a \emph{relative grading by the root lattice}. This is to say that the difference of any two weights appearing in the same irreducible representation is a linear combination of roots.
\end{enumerate}

\subsection{Vector fields on $\PP^1$}
\label{sec:sh-cstar}
Let us recall the structure of the symplectic cohomology of $\C^*$ as a
Batalin--Vilkovisky algebra. Let $x$ denote the complex coordinate on $\C^*$,
so that $\frac{dx}{x}$ is a holomorphic one-form with simple poles at zero and
infinity. The symplectic form $\omega$ is chosen so that $\C^*$ has two
cylindrical ends symplectomorphic to $[0,\infty) \times S^1$ with $\omega =
d(rd\phi)$, where $r$ is the radial coordinate and $\phi$ is the angular coordinate on $S^1$. The symplectic cohomology $SH^*(\C^*)$ is computed using the family of Hamiltonians which are of the form $H = mr+ c$ on each end. The grading is
determined by the form $\frac{dx}{x}$. For simplicity, we work over a field  $\f{K}$ of characteristic zero. (Though, as before, we could work over arbitrary rings, such as $\f{Z}$ or $\f{Z}_2)$.

This section is an elaboration of the following theorem. It is a special case of the general relationship between the symplectic cohomology of $T^*Q$, where $Q$ is an oriented spin manifold (below $Q = T^n$), and the homology of the free loop space $H_*(\calL Q)$. With our conventions, there is an isomorphism of BV algebras $SH^\bullet(T^*Q) \cong H_{n-\bullet}(\calL Q)$, where $n = \dim_\R Q$. This is a consequence of work of Abbondandolo--Schwarz, Salamon--Weber, and Viterbo \cite{abb-schwarz,abb-schwarz-product,salamon-weber,viterbo-2}, see also the discussion in Seidel's lecture notes \cite{seidel-lectures}.
\begin{theorem}
  The symplectic cohomology of $\C^*$ is isomorphic, as a Batalin-Vilkovisky algebra, to space of polyvector fields on $\Gm$. That is to say,  $SH^p(\C^*)$ is concentrated in degrees $p = 0, 1$, and
\begin{align}
  \label{eq:sh-basis}
  SH^0(\C^*) \cong H^0(\Gm,\cO_{\Gm}) &= \K\langle z^n \mid n \in \Z \rangle\\
  SH^1(\C^*) \cong H^0(\Gm,\cT_{\Gm}) &= \K\langle \xi_n \mid n \in \Z \rangle 
\end{align}
where $z$ denotes a coordinate on $\Gm$, $\xi_n$ denotes the vector field $z^{n+1}\partial_z$, and the Batalin--Vilkovisky structure on polyvector fields is determined by the nowhere vanishing one-form $ \Omega = z^{-1}\,dz$.
\end{theorem}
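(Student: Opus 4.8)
The plan is to obtain the statement from the known identification of $SH^\bullet(T^*Q)$ with loop space homology and then to make the loop-space side completely explicit. First I would note that $\C^*$, equipped with the Liouville structure described above, is (Liouville) symplectomorphic to $T^*S^1$: the coordinate $x = e^{r + 2\pi i \phi}$ identifies the two cylindrical ends $r \to \pm\infty$ with the two ends of $T^*S^1$, and $d(r\,d\phi)$ becomes the canonical symplectic form. Since $S^1$ is an oriented spin manifold of dimension $n = 1$, the theorem of Abbondandolo--Schwarz, Salamon--Weber, and Viterbo \cite{abb-schwarz,abb-schwarz-product,salamon-weber,viterbo-2} (see also \cite{seidel-lectures}) provides an isomorphism of BV algebras $SH^\bullet(\C^*) \cong H_{1-\bullet}(\calL S^1;\K)$, where the right-hand side carries the Chas--Sullivan string topology structure: the loop product as associative product and the loop-rotation operator as the BV operator $\Delta$. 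One must use the spin structure on $S^1$ for which this identification is untwisted; the other choice would replace $H_*(\calL S^1)$ by homology with coefficients in a nontrivial rank-one local system, which does not produce the stated answer. It then remains to compute $H_*(\calL S^1;\K)$ as a BV algebra and to match it with polyvector fields on $\Gm$.

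For this computation I would use that $S^1$ is an abelian topological group, so $\calL S^1 = \mathrm{Map}(S^1,S^1)$ is itself a topological abelian group and the assignment $\gamma \mapsto (\gamma(0),\gamma(0)^{-1}\gamma)$ is a homeomorphism $\calL S^1 \cong S^1 \times \Omega S^1$. Since $\Omega S^1 \simeq \Z$ is discrete, $\calL S^1$ has one component $\calL_n S^1 \simeq S^1$ for each winding number $n$, hence $H_0(\calL S^1) = \bigoplus_n \K\langle p_n\rangle$ and $H_1(\calL S^1) = \bigoplus_n \K\langle q_n\rangle$ with $H_k(\calL S^1) = 0$ for $k \geq 2$; here $p_n$ is the class of a point of $\calL_n S^1$ and $q_n$ is its fundamental class. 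This already yields that $SH^0(\C^*) \cong H_1(\calL S^1)$ and $SH^1(\C^*) \cong H_0(\calL S^1)$ are free of countable rank, $SH^p = 0$ otherwise, and the correspondences $q_n \leftrightarrow z^n$, $p_n \leftrightarrow \xi_n = z^{n+1}\partial_z$ realize the vector-space identifications \eqref{eq:sh-basis}. For the ring structure, under the splitting above the loop product is the tensor product of the intersection product on $H_*(S^1)$ (for which the fundamental class is a degree $-1$ unit) with the Pontryagin product on $H_*(\Omega S^1) = \K[\Z]$; a short computation gives $q_m\cdot q_n = q_{m+n}$, $q_m\cdot p_n = p_{m+n}$, $p_m\cdot p_n = 0$, matching the product of polyvector fields on the curve $\Gm$: $z^m\cdot z^n = z^{m+n}$, $z^m\cdot \xi_n = z^{m+n+1}\partial_z = \xi_{m+n}$, and the wedge of two vector fields vanishes.

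It remains to match $\Delta$. On the loop side, $\Delta$ is induced by the $S^1$-action rotating the domain: for the constant-speed representative $\gamma_n(t) = nt \bmod 1$ of $\calL_n S^1$, the orbit map $s \mapsto \gamma_n(\,\cdot + s)$ wraps $n$ times around $\calL_n S^1$, so $\Delta p_n = n\,q_n$, while $\Delta q_n = 0$ since $q_n$ is represented by an $S^1$-invariant cycle and $\Delta^2 = 0$. On the polyvector field side, the BV operator attached to $\Omega = z^{-1}\,dz$ is the divergence: $\Delta = 0$ on $\cO$, and for a vector field $X$ one has $d(\iota_X\Omega) = \Div_\Omega(X)\,\Omega$; since $\iota_{z^{n+1}\partial_z}(z^{-1}dz) = z^n$ and $d(z^n) = nz^{n-1}dz = (nz^n)(z^{-1}dz)$, one gets $\Delta\xi_n = n z^n$ (up to the conventional overall sign). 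Hence $\Delta p_n = n q_n \leftrightarrow n z^n = \Delta\xi_n$ and $\Delta q_n = 0 \leftrightarrow \Delta z^n = 0$, so $\Delta$ is matched; it is precisely because $z^{-1}dz$ is $\Gm$-translation invariant (unlike, say, $dz$) that one recovers the coefficient $n$. Since the product and $\Delta$ agree, the Gerstenhaber bracket agrees as well via the identity $[x,y] = \Delta(x\cup y) - \Delta(x)\cup y - (-1)^{|x|} x\cup \Delta(y)$ recalled in Section \ref{sec2-1} (which on $SH^1$ reproduces the Witt algebra bracket $[\xi_m,\xi_n] = (n-m)\xi_{m+n}$), and the asserted isomorphism of BV algebras follows.

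The step I expect to be the main obstacle is getting the normalization of $\Delta$ exactly right, which amounts to pinning down both the spin structure on $S^1$ for which the $SH^\bullet(T^*S^1) \cong$ loop homology isomorphism is untwisted and the precise coefficient $n$ (rather than $n\pm 1$) in the divergence. If one prefers to bypass the loop-space machinery, there is a direct Floer-theoretic route: the generators of $SC^*(\C^*)$ are the Reeb orbits on the two cylindrical ends, organized by winding number; for each $n$ the Morse--Bott circle of orbits contributes a copy of $H_*(S^1)$ and the Floer differential vanishes; the $\Lambda = H_1(\C^*;\Z) = \Z$-grading of Section \ref{sec:observations} forces the product to be $\Z$-homogeneous, hence determined by a single normalizing count; and $\Delta$ is computed from the $S^1$-reparametrization action on orbits, a winding-$n$ orbit being an $n$-fold multiple cover of the underlying simple orbit, which is the source of the coefficient $n$. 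Either route yields the same conclusion.
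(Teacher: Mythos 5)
Your proposal is correct and follows essentially the same route as the paper: it invokes the Abbondandolo--Schwarz/Salamon--Weber/Viterbo isomorphism $SH^{\bullet}(T^*S^1) \cong H_{1-\bullet}(\calL S^1)$ of BV algebras, identifies the components of $\calL S^1$ by winding number to get the additive answer, matches the loop product with the product on polyvector fields, and recovers $\Delta(\xi_n) = n z^n$ from loop rotation on one side and the divergence with respect to $\Omega = z^{-1}dz$ on the other. Your extra care about the spin structure and the explicit splitting $\calL S^1 \cong S^1 \times \Omega S^1$ only makes explicit what the paper leaves implicit.
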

We also use the notation $\theta = \xi_0 = z\partial_z$, thus $\xi_n = z^n\theta$.

In terms of the periodic orbits of the Hamiltonian, we can take a Hamiltonian equal to $mr+c$ on each end. Because this Hamiltonian does not depend on the angular coordinate $\phi$, the periodic orbits come in $S^1$ families, given by rotating the starting point, which are indexed by an integer $n$ counting how many times a periodic orbit winds around the $S^1$ factor in $\C^* \cong \R \times S^1$ (this indexing involves a choice of orientation of $S^1$). After perturbation of this Morse--Bott situation, each periodic circle breaks into two periodic orbits, one, denoted $z^n$, corresponds to the canonical element in $H^0(S^1)$, the other, denoted $\xi_n$ corresponds to an orientation class in $H^1(S^1)$.

This picture matches precisely with the free loop space homology isomorphism $SH^p(\C^*) \cong H_{1-p}(\calL S^1)$. The free loop space $\calL S^1$ is homotopy equivalent to $S^1\times \Z$. The element $z^n$ corresponds to the element in $H_1(S^1\times\{n\})$ giving an orientation, while $\xi_n$ corresponds to the canonical element in $H_0(S^1\times \{n\})$. 

The structure of a Batalin--Vilkovisky algebra on $SH^*(\C^*)$ is defined using pseudo-holomorphic maps, but it is easier to think in terms of the free loop space. The product operation is given by composing families of loops at points where their evaluations to the base coincide, and the result is that $SH^*(\C^*)$ has unit $z^0$, and is generated by $z^1$, $z^{-1}$, and $\theta = \xi_1$, and we have $z^az^b = z^{a+b}$, $\xi_n = z^n\theta$, and $\theta^2 = 0$. Thus $SH^0(\C^*)$ is the ring of Laurent polynomials, and $SH^1(\C^*)$ is a free rank-one module over that ring.

The Batalin--Vilkovisky operator $\Delta$ maps $SH^1(\C^*)$ to $SH^0(\C^*)$. In the free loop space picture, this operator corresponds to rotating the parametrization of the loop. Taking a degree $n$ loop with a fixed parametrization, if we rotate the parametrization through a $1/n$-turn, we obtain the family of all parametrizations of that loop. Thus the full rotation results in $n$ times that family, and thus:
\begin{equation}
  \label{eq:bv-rotation}
  \Delta(\xi_n) = \Delta(z^n\theta) = nz^n
\end{equation}
Observe that for negative $n$ the orientation convention is coming into play.

Recall that a Batalin--Vilkovisky algebra $(A,\cdot,\Delta)$ has a Lie bracket of degree $-1$, given by
\begin{equation}
  \label{eq:bracket}
  [a,b]= \Delta(a\cdot b) - \Delta(a)\cdot b - (-1)^{|a|} a\cdot \Delta(b)
\end{equation}
With this bracket, $A[1]$ is a graded Lie algebra, and in particular $A^1$ is a Lie algebra. Considering this bracket on $SH^1(\C^*)$, the formula becomes
\begin{equation}
  \begin{split}
    \label{eq:bracket-sh}
    [\xi_n,\xi_m] = [z^n\theta,z^m\theta] &= \Delta(z^{n+m}\theta^2) - \Delta(z^n\theta)\cdot z^m\theta + z^n\theta \cdot \Delta(z^m\theta)\\
    &= 0 - nz^n\cdot z^m\theta + z^n\theta \cdot m z^m\\
    &= (m-n) z^{n+m}\theta = (m-n)\xi_{n+m}
  \end{split}
\end{equation}
In particular, the subspace $\K\langle\xi_{-1},\xi_0,\xi_1\rangle$ is closed under the bracket, and is isomorphic to $\fraksl_2(\K)$. Indeed, if we define
\begin{equation}
  \label{eq:sl2-triple}
  e = \xi_1, \quad h = 2\xi_0, \quad f = -\xi_{-1}
\end{equation}
then we have $[h,e] = 2e$, $[h,f] = -2f$, and $[e,f] = h$.

Now let us consider the mirror of these structures. The mirror of $\C^*$ is the algebraic torus $\Gm$. On $\Gm$, we consider the cohomology of polyvector fields
$H^*(\Gm,\Lambda^{\bullet} \cT_{\Gm}) = H^0(\Gm,\cO_{\Gm}) \oplus H^0(\Gm,\cT_{\Gm})$. It consists of global functions $H^0(\Gm,\cO_{\Gm}) \cong \K[z,z^{-1}]$, and global vector fields $H^0(\Gm,\cT_{\Gm}) \cong \K[z,z^{-1}]\cdot \theta$, where
\begin{equation}
  \label{eq:theta-vector}
  \theta = z\partial_z
\end{equation}
Indeed the notation is intended to be consistent with the symplectic cohomology picture, as we shall see. The product is just the algebraic one. We find $\theta^2 = 0$ because this would live in $\Lambda^2\cT_{\Gm}$.

It remains to consider the Batalin--Vilkovisky operator. Now we must use the Calabi--Yau structure of $\Gm$, namely the algebraic volume form $\Omega = \frac{dz}{z}$. This defines an contraction operator from polyvector fields to differential forms $\iota_\Omega : H^0(\Gm,\Lambda^p\cT_{\Gm}) \to H^0(\Gm, \Omega^{1-p}_{\Gm})$, defined for a function $f$ or a vector field $\xi$ by
\begin{equation}
  \begin{split}
  \label{eq:contraction}
  \iota_\Omega(f) &= f\Omega\\
  \iota_\Omega(\xi) &= \Omega(\xi)
\end{split}
\end{equation}
On differential forms we have the de Rham differential $d$, and the Batalin--Vilkovisky operator is $\Delta = \iota_\Omega^{-1} d \iota_\Omega$. This operation is also called $\Div_\Omega$, the divergence with respect to the volume form $\Omega$. Note that $\Omega(\theta) = z^{-1}dz(z\partial_z) = 1$. Now we compute
\begin{align}
  \label{eq:bv-vectorfield}
  \iota_\Omega(z^n \theta) &= z^n\\
  d(z^n) &= nz^{n-1}\,dz = nz^n \Omega\\
  \iota_\Omega^{-1}(nz^n \Omega) &= nz^n
\end{align}
Thus $\Delta(z^n\theta) = nz^n$ in the vector field picture as well. Now the same computation as before shows that the bracket on vector fields is
\begin{equation}
  [z^n\theta,z^m\theta] = (m-n) z^{n+m} \theta
\end{equation}
This is just the differential-geometric Lie bracket on vector fields, as we verify using $\theta = z \partial_z$:
\begin{equation}
  [z^{n+1}\partial_z, z^{m+1}\partial_z] = z^{n+1}(m+1)z^m\partial_z - z^{m+1}(n+1)z^n\partial_z = (m-n)z^{n+m+1}\partial_z
\end{equation}
The $\fraksl_2$ subalgebra we found above now looks like:
\begin{equation}
  \label{eq:sl2-algebraic}
  e = z^2\partial_z, \quad h = 2z\partial_z , \quad f = -\partial_z
\end{equation}
Observe that these are precisely the vector fields on $\Gm$ that extend to the compactification $\PP^1$: $e$ vanishes to second order at zero, $h$ vanishes to first order at zero and infinity, and $f$ vanishes to second order at infinity, while any other vector field $z^{n+1}\partial_z$ will have a pole at one of these points.

\subsection{Vector fields on $\PP^r$}
\label{sec:n-dim-case}

The prior discussion has a simple generalization to $\PP^r$, regarded as a partial flag manifold for $\fraksl_{r+1}$. We recall the elementary description of vector fields on $\PP^r$. Represent $\PP^r$ as the quotient of $\Af^{r+1}\setminus \{0\}$ by $\Gm$. The tangent sheaf of $\Af^{r+1}$ is a free sheaf generated by the vector fields $D_i = \frac{\partial}{\partial Z_i}$ where $Z_0,\dots, Z_r$ are coordinates. With respect to the $\Gm$ action, the weight of $D_i$ is reciprocal to that of $Z_i$, and so the linear vector fields $E_{ij} = Z_iD_j$ have weight zero. The Euler vector field $E = \sum_{i=0}^r E_{ii}$ is tangent to the orbits of the $\Gm$ action. To obtain vector fields on $\PP^r$, we restrict to weight zero (= linear) vector fields, and quotient by the Euler vector field $E$:
\begin{equation}
  \label{eq:vect-pn}
  H^0(\PP^r,\cT_{\PP^r}) = \langle Z_iD_j \mid i,j = 0,\dots,r \rangle / \langle Z_0D_0 + \dots + Z_rD_r \rangle
\end{equation}
The vector fields $E_{ij} = Z_iD_j$ span a Lie algebra isomorphic to $\frakgl_{r+1}$, and the Euler vector field spans the center (= scalar matrices). Thus the Lie algebra of vector fields on $\PP^r$ is isomorphic to $\fraksl_{r+1}$.

Now let $U^\vee = \PP^r \setminus \{Z_0Z_1\cdots Z_r = 0\}$ be the complement of the coordinate hyperplanes, and consider the vector fields on $U^{\vee}$. Since $U^{\vee}$ is isomorphic to an algebraic torus $\Gm^r$, after choosing coordinates $(z_1,\dots,z_r)$ and writing $\partial_i = \frac{\partial}{\partial z_i}$, we may represent vector fields as
\begin{equation}
  H^0(U^{\vee},\cT_{U^{\vee}}) = \K[z_1^{\pm 1},\dots,z_r^{\pm 1}]\otimes \langle \partial_1,\dots,\partial_r \rangle
\end{equation}
To see how such vector fields arise from restriction, let us choose affine coordinates $z_i = Z_i/Z_0$ on $\PP^r \setminus \{Z_0 = 0\}$. This is an affine space that contains $U^{\vee}$ as the locus where all $z_i$ are nonvanishing. By testing against a germ of a function on $U^{\vee}$ pulled back to $\Af^{r+1}\setminus \{0\}$, we compute this restriction map:
\begin{align}
\label{eq:vf-restriction1}
  \text{if $i\neq 0$ and $j \neq 0$,}\quad & Z_iD_j \mapsto z_i\partial_j \\
  \text{if $i = 0$ and $j \neq 0$,} \quad & Z_0 D_j \mapsto \partial_j\\
  \text{if $i \neq 0$ and $j = 0$,} \quad & Z_i D_0 \mapsto -z_i \sum_{k=1}^r z_k\partial_k\\
\label{eq:vf-restriction2}  \text{if $i = 0$ and $j = 0$,} \quad & Z_0D_0 \mapsto -\sum_{k=1}^r z_k\partial_k
\end{align}

The natural Cartan subalgebra $\h$ is spanned by the vector fields $Z_iD_i$ for $i = 0,\dots, r$ (modulo the relation that their sum is zero), or equivalently by $\theta_i = z_i\partial_i$ for $i = 1,\dots,r$ (which are linearly independent). Note that this subalgebra $\h$ consists precisely of those vector fields that are tangent to the divisor $\{Z_0Z_1\cdots Z_r = 0\}$ that we remove in constructing the mirror.

The mirror Landau--Ginzburg model is $U = (\C^*)^r$, with complex coordinates $(x_1,\dots,x_r)$, and superpotential $W = \sum_{i=1}^r x_i + (\prod_{i=1}^r x_i)^{-1}$. It is possible to homogenize this formula by considering $(\C^*)^{r+1}$, with coordinates $(X_0,X_1,\dots,X_r)$ with superpotential $W = \sum_{i=0}^r X_i$. Then $U$ is identified with the hypersurface $\prod_{i=0}^r X_i = 1$, where we set $x_i = X_i$ and eliminate $X_0$. 

Using the fact that it is isomorphic to the free loop space homology of a torus $T^r$ (with grading $k$ replaced with $r-k$), the symplectic cohomology $SH^*(U)$ is 
\begin{equation}
  SH^*(U) \cong \K[z_1^{\pm 1}, \dots,z_r^{\pm 1}]\otimes \Lambda^*[\theta_1,\dots,\theta_r] = \K[H_1(T^r;\Z)]\otimes H^*(T^r;\K)
\end{equation}
Here we have chosen an integral basis $(\theta_1,\dots,\theta_r)$ of $H^1(T^r;\Z)$, and $(z_1,\dots,z_r)$ is the dual integral basis of $H_1(T^r;\Z)$. The subspace $1 \otimes H^*(T^r;\K)$ is the image of the natural map $H^*(U)\to SH^*(U)$. In addition to the cohomological grading, there is a grading by $H_1(T^r;\Z) \cong H_1(U; \Z)$ coming from the left tensor factor, and corresponding to the grading by free homotopy classes of loops. We use the multi-index notation $z^n = z_1^{n_1}\cdots z_r^{n_r}$. Note that the vector $n$ may be identified with an element of $H_1(U;\Z)$.

\begin{proposition}
\label{prop:sh-bv-alg}
  The symplectic cohomology $SH^*((\C^*)^r)$ is isomorphic as a Batalin--Vilkovisky algebra to the polyvector fields on $\Gm^r$, where the element $\theta_i$ corresponds to $z_i\partial_i$, and the BV operator on the latter is the divergence with respect to the volume form $\Omega = \prod_{i=1}^r \frac{dz_i}{z_i}$, multiplied by $(-1)^{\text{degree}+1}$.
\end{proposition}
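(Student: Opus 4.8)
The plan is to reduce to the one-dimensional case of Section~\ref{sec:sh-cstar} together with a loop-space (equivalently, K\"unneth) computation. By the string-topology isomorphism already invoked above, $SH^\bullet((\C^*)^r) = SH^\bullet(T^*T^r)$ is isomorphic \emph{as a BV algebra} to $H_{r-\bullet}(\calL T^r)$ with its Chas--Sullivan structure (here we use that $T^r$ is oriented and spin). So it suffices to compute this Chas--Sullivan BV algebra and to match it with polyvector fields on $\Gm^r$.

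First I would compute $H_*(\calL T^r)$ as a BV algebra. Since $T^r$ is a compact abelian Lie group, $\calL T^r \cong T^r\times\Omega T^r = T^r\times\Z^r$ via $\gamma\mapsto(\gamma(0),\gamma(0)^{-1}\gamma)$, and under this identification the loop product becomes the tensor product of the Pontryagin product on $H_*(T^r)$ — which is the exterior algebra $\Lambda^*(H_1(T^r))$, as $T^r$ is abelian — with the Pontryagin (convolution) product on $H_*(\Z^r)=\K[\Z^r]$. Thus $H_*(\calL T^r)\cong\Lambda^*(H_1(T^r))\otimes\K[\Z^r]$ as algebras. The BV operator $\Delta$ is induced by rotation of the loop parameter; on the winding-$n$ component $\calL_n T^r\simeq T^r$, rotating $g\cdot\ell_n$ by $s$ — where $\ell_n\colon S^1\to T^r$ is the homomorphism with $[\ell_n]=n\in H_1(T^r)$ — replaces the basepoint $g$ by $g\cdot\ell_n(s)$, so $\Delta$ is Pontryagin multiplication by $n$, exactly as in the computation leading to \eqref{eq:bv-rotation}.

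To finish I would transport this to polyvector fields. Under Poincar\'e duality on $T^r$ (contraction with the fundamental class), $H_{r-p}(\calL_n T^r)=\Lambda^{r-p}H_1(T^r)$ is identified with $\Lambda^p H^1(T^r)=\Lambda^p\langle\theta_1,\dots,\theta_r\rangle$, i.e. with the degree-$p$ polyvector fields on $\Gm^r$ with coefficient $z^n$, $\theta_i$ corresponding to $z_i\partial_i$; the loop product goes over to the wedge product, and $\Delta$ goes over to the divergence $\Div_\Omega$ with $\Omega=\prod_i dz_i/z_i$, up to the Koszul sign $(-1)^{\mathrm{degree}+1}$ picked up in passing through the duality pairing — equivalently, the loop-space $\Delta$ matches the operator $\iota_\Omega^{-1}d\,\iota_\Omega$ used in the $r=1$ case of Section~\ref{sec:sh-cstar}, which differs from the honest divergence via $\iota_{\Div_\Omega\pi}\Omega=(-1)^{|\pi|}d\,\iota_\pi\Omega$. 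An equivalent packaging, bypassing the duality match: $(\C^*)^r$ is an $r$-fold product, $SH^*$ sends products to BV tensor products, and $(\mathrm{polyvec}(\Gm^r),\Div_{\prod dz_i/z_i})$ is the BV tensor product of $r$ copies of $(\mathrm{polyvec}(\Gm),\Div_{dz/z})$ by the Leibniz rule $\Div_{\Omega_1\boxtimes\Omega_2}(\pi_1\boxtimes\pi_2)=(\Div_{\Omega_1}\pi_1)\boxtimes\pi_2+(-1)^{|\pi_1|}\pi_1\boxtimes(\Div_{\Omega_2}\pi_2)$; then the statement follows from its $r=1$ case plus the same sign computation.

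The substantive inputs (the $SH\cong H_*(\calL)$ BV-algebra isomorphism; the string topology of a compact Lie group; or, in the alternative packaging, the BV-compatibility of the K\"unneth isomorphism for $SH$) are imported from the literature, so the real labor is bookkeeping. The main obstacle to writing carefully is tracking orientation and basis conventions — which class of $H_*(\calL_n T^r)$ is ``$z^n$'' versus ``$\theta_i z^n$'', and the negative-winding sign already flagged after \eqref{eq:bv-rotation} — together with the Koszul sign of the previous paragraph, which is precisely the source of the global factor $(-1)^{\mathrm{degree}+1}$ appearing in the proposition.
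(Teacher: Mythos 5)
Your proposal is correct and is essentially the paper's own argument: the paper likewise rests on the BV-algebra isomorphism $SH^\bullet(T^*T^r)\cong H_{r-\bullet}(\calL T^r)$, realizes the generators on the Morse--Bott tori of orbits (the components $T^r\times\{n\}$ of $\calL T^r$), computes $\Delta$ as loop rotation, i.e.\ translation by $n$, which is Poincar\'e dual to contraction $\iota_n$, and then matches this with $\iota_\Omega^{-1}d\,\iota_\Omega$ on polyvector fields up to the factor $(-1)^{k+1}$ in degree $k$. Two bookkeeping points to repair when writing it up: on the $H_*(T^r)$ factor the Chas--Sullivan product is the intersection product (Poincar\'e dual to the cup product on $\Lambda^*H^1(T^r)$), not the Pontryagin product as you state --- otherwise the product would not be additive in the $SH$-grading --- and your parenthetical claim that the loop-space $\Delta$ literally equals $\iota_\Omega^{-1}d\,\iota_\Omega$ cannot be right either, since the explicit computation shows the two differ on degree-$k$ polyvectors by exactly the global sign $(-1)^{k+1}$ asserted in the proposition.
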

\begin{proof}
   Since the BV structure is determined by the product and the BV operator, it remains to match the BV operator with the one on polyvector fields. We can do this using a Morse-Bott complex for symplectic cohomology, where we obtain a torus of periodic orbits in each homotopy class $n \in H_1(U;\Z)$. The generator $z^n$ corresponds to the top cycle on this torus, the generators $z^n\theta_1,\dots,z^n\theta_r$ correspond to $(r-1)$-cycles, and a generator such as $z^n(\theta_{i_1}\wedge \cdots \wedge \theta_{i_k})$ corresponds to an $(r-k)$-cycle. The BV operator spins these cycles along the parametrization of the loops, which acts on the torus by a translation determined by the vector $n \in H_1(U;\Z) \cong H_1(T^r;\Z)$. This spinning is Poincar\'{e} dual to contraction with $n$ acting on $H^*(T^r)$. Thus we deduce, for $\eta \in \Lambda^*(\theta_1,\dots,\theta_r)$
  \begin{equation}
    \label{eq:bv-contraction}
    \Delta(z^n\eta) = z^n(\iota_n \eta)
  \end{equation}

Now to compute the BV operator on polyvector fields. Consider a polyvector field
\begin{equation}
  \Xi = z^n (z_{i_1}\partial_{i_1})\wedge \cdots \wedge (z_{i_k}\partial_{i_k})
\end{equation}
By reordering the indices, we may assume $i_j = j$ for $j = 1,\dots,k$. This changes the volume form $\Omega$ by a power of $(-1)$, but does not affect the associated divergence operator. With this reordering done, we compute the contraction
\begin{equation}
  \iota_\Omega \Xi = z^n \prod_{i=k+1}^r \frac{dz_i}{z_i}
\end{equation}
The differential is
\begin{equation}
  d\iota_\Omega \Xi = z^n\sum_{j=1}^k n_j \frac{dz_j}{z_j}\prod_{i=k+1}^r\frac{dz_i}{z_i}
\end{equation}
Applying inverse contraction yields
\begin{equation}
  \iota_\Omega^{-1}d\iota_\Omega\Xi = z^n \sum_{j=1}^k(-1)^{j-k} n_j \prod_{i\in \{1,\dots,k\}\setminus \{j\}} z_i\partial_i
\end{equation}
And this matches with the symplectic computation of $\Delta$ after multiplication by $(-1)^{k+1}$. 
\end{proof}

We are particularly interested in degree one symplectic cohomology:
\begin{equation}
  SH^1(U) \cong \K[z_1^{\pm 1}, \dots,z_r^{\pm 1}] \otimes \langle \theta_1,\dots,\theta_r \rangle 
\end{equation}

\begin{corollary}
  The Lie algebra $SH^1(U)$ is isomorphic to $\Vect(\Gm^r)$, where $\theta_i$ is identified with $z_i\partial_i$, and $z^n\theta_i$ with $z^n\cdot z_i \partial_i$.
\end{corollary}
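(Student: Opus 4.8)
The plan is to derive the corollary directly from Proposition \ref{prop:sh-bv-alg}. That proposition already produces an isomorphism of Batalin--Vilkovisky algebras between $SH^*(U)$ and the polyvector fields on $\Gm^r$, under which $\theta_i$ corresponds to $z_i\partial_i$ and the BV operator corresponds to $(-1)^{\text{degree}+1}\Div_\Omega$ with $\Omega = \prod_{i=1}^r \frac{dz_i}{z_i}$. Since the Lie bracket on $SH^1(U)$ is by definition the restriction to degree one of the degree $-1$ Gerstenhaber bracket \eqref{eq:bracket}, which is manufactured from nothing but the product and the BV operator, an isomorphism of BV algebras automatically intertwines these brackets in every degree, in particular in degree one. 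So the first step is just to observe that $SH^1(U)$ is, as a Lie algebra, isomorphic to $H^0(\Gm^r,\cT_{\Gm^r}) = \Vect(\Gm^r)$ equipped with whatever bracket formula \eqref{eq:bracket} produces from the wedge product and the twisted divergence, with $z^n\theta_i$ mapping to $z^n z_i\partial_i$.

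The second step is to identify that bracket with the ordinary Lie bracket of vector fields. For $X,Y\in\Vect(\Gm^r)$ one has $|X|=|Y|=1$ while $X\wedge Y$ sits in degree $2$, so the twisted BV operator acts as $+\Div_\Omega$ on $X$ and $Y$ and as $-\Div_\Omega$ on $X\wedge Y$; substituting into \eqref{eq:bracket} yields
\[ [X,Y] = -\Div_\Omega(X\wedge Y) - (\Div_\Omega X)\,Y + (\Div_\Omega Y)\,X, \]
which is exactly the classical identity expressing the Lie bracket of vector fields via the divergence attached to a volume form (equivalently, the statement that $\Div_\Omega$ generates the Schouten--Nijenhuis bracket, which restricts to the Lie bracket on vector fields); it follows in one line from $\mathcal{L}_X\Omega = (\Div_\Omega X)\,\Omega$ and Cartan's formula. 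I would emphasize that the $(-1)^{\text{degree}+1}$ twist built into Proposition \ref{prop:sh-bv-alg} is precisely what makes the signs work out this cleanly, with no leftover correction term. As a cross-check one can note that in rank one this is exactly the computation \eqref{eq:bracket-sh}; alternatively, one could bypass the coordinate-free identity and instead evaluate \eqref{eq:bracket} directly on $a=z^n\theta_i$, $b=z^m\theta_j$, using $\Delta(z^n\theta_i)=n_iz^n$ (a special case of \eqref{eq:bv-contraction}) and the analogous formula for the divergence of a bivector, verifying that the answer equals $[z^nz_i\partial_i,\,z^mz_j\partial_j]$.

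The main (and essentially the only) obstacle is the sign bookkeeping in the second step: one has to be careful to confirm that the twisted divergence of Proposition \ref{prop:sh-bv-alg} reproduces the honest Lie bracket on the degree-one part rather than a sign-modified variant, and that no $\mu^1$- or differential-type correction survives. Once that is settled, by either route above, the corollary --- including the explicit correspondence $z^n\theta_i \leftrightarrow z^n z_i\partial_i$ --- follows formally from Proposition \ref{prop:sh-bv-alg}.
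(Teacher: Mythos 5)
Your proposal is correct and follows essentially the same route as the paper: the corollary is stated there as an immediate consequence of Proposition \ref{prop:sh-bv-alg}, since a BV-algebra isomorphism intertwines the derived brackets and the bracket that $\eqref{eq:bracket}$ produces from the (sign-twisted) divergence on polyvector fields restricts to the ordinary Lie bracket of vector fields, exactly as the paper checks by direct computation in the $\C^*$ case (equation \eqref{eq:bracket-sh}). Your sign bookkeeping with the $(-1)^{\text{degree}+1}$ twist, and the fallback coordinate verification using $\Delta(z^n\theta_i)=n_iz^n$, both check out against the paper's conventions.
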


We will sometimes use the notation $\xi_{n,i} = z^n\theta_i$ for either an element of $SH^1(U)$ or a vector field on $\Gm^r$.

By composing the restriction map on vector fields from $\PP^r$ to $\Gm^r$ in equations \eqref{eq:vf-restriction1}--\eqref{eq:vf-restriction2} with the isomorphism in Proposition \ref{prop:sh-bv-alg}, we obtain an embedding $\fraksl_{r+1}(\K) \to SH^1(U)$. The image consists those symplectic cohomology elements that correspond to the vector fields that extend to $\PP^r$. We shall now describe what various structures in $\fraksl_{r+1}(\K)$ correspond to in symplectic cohomology. First we observe that, as expected, the Cartan subalgebra $\h \cong 1 \otimes \langle \theta_1,\dots,\theta_r \rangle$ is the image of $H^1(U) \to SH^1(U)$. 

The Lie bracket preserves the grading by $H_1(T^r;\Z)$, which is a lattice in $H_1(T^r;\K) \cong \h^*$.  In fact we can identify $H_1(T^r;\Z)$ with the $A_r$ root lattice quite naturally: the latter is the intersection of the rectangular lattice $\Z^{r+1} \subset \K^{r+1}$ with the subspace where the sum of the coordinates is zero. On the other hand, the lattice $H_1(T^r;\Z) \cong H_1(U;\Z)$ embeds into $H_1((\C^*)^{r+1};\Z)$ since $U = \{X_0X_1\cdots X_r = 1\} \subset (\C^*)^{r+1}$. A system of coordinates on $H_1((\C^*)^{r+1};\Z)$ is given by integration against the one-forms $(2\pi i)^{-1}X_i^{-1}dX_i$. The sum of those one-forms vanishes when restricted to $U$, and hence evaluates trivially on $H_1(U;\Z)$. 

With this identification, we see that $\h$ sits at zero in $H_1(T^r;\Z)$, just
as $\h$ sits at zero in the root lattice, and the other elements of
$\fraksl_{r+1}(\K) \subset SH^1(U)$ fill out an $A_r$ root system.
Explicitly, the class $z_i \in H_1(T^r;\Z)$ corresponds to the cycle on $U
\cong \{(x_1,\dots,x_r)\in (\C^*)^n\}$ where the $i$-th coordinate traces a
circle and the others are held constant at $1$. Embedding this into
$(\C^*)^{r+1}$, we find that $X_0$ traces a circle in the opposite direction.
Give $H_1((\C^*)^{r+1};\Z) \cong \Z^{r+1}$ a basis $e_0,\dots,e_r$
corresponding to integration against the one-forms $(2\pi i)^{-1}X_i^{-1}dX_i$.
Then $z_i$ is identified with the vector $e_i - e_0$ in $\Z^{r+1}$. The vector
field $z_i\partial_j = (z_i/z_j)\theta_j$ ($i,j \neq 0$) corresponds to the
element of the same notation in $SH^1(U)$, and its grading is $e_i - e_j$.
The vector field $\partial_j = z_j^{-1}\theta_j$ ($j \neq 0$) corresponds to an
element with grading $e_0 - e_j$. The vector field $-z_i \sum_{k=1}^r
z_k\partial_k = -z_i \sum_{k=1}^r \theta_k$ ($i \neq 0$) corresponds to an
element with grading $e_i - e_0$. So we find that every vector of the form $e_a
- e_b$ with $a,b \in \{0,\dots,r\}$ appears, and this is precisely the $A_r$
root system.

\section{Witt algebra representations from $SH^*((\C^*)^r)$}
\label{sec:witt-sh}

Even without considering the equivariant wrapped Fukaya category, the structure of symplectic cohomology of $(\C^*)^r$ as a Batalin--Vilkovisky algebra allows us to see a number of representations of the $r$-th Witt algebra, that is, the Lie algebra $\Vect(\Gm^r)$ of vector fields on the mirror $\Gm^r$. The results in section \ref{sec:sh-vf} show that $SH^1((\C^*)^r)$ is isomorphic to $\Vect(\Gm^r)$, and that $SH^0((\C^*)^r)$ is isomorphic to the Laurent polynomial ring $\K[z_1^{\pm 1},\dots,z_r^{\pm 1}]$, which is the ring of functions $\cO(\Gm^r)$. 

Much of the structure we will use applies to any Liouville manifold $U$. Recall that $SH^*(U)$ is a BV algebra, so the bracket (which is derived from the product and the BV operator $\Delta$) turns $SH^*(U)[1]$ into a graded Lie algebra, and in particular makes $SH^0(U)$ into a Lie module over $SH^1(U)$. 

In the case of $U = (\C^*)^r$, the Lie module structure is 
\begin{equation}
  \label{eq:lie-module}
  \begin{split}
  [z^n\theta_i, z^m] = \Delta (z^n\theta_iz^m) - \Delta(z^n\theta_i) z^m + z^n\theta_i \Delta(z^m) = \Delta(z^{n+m}\theta_i) - \Delta(z^n\theta_i)z^m \\= (n_i+m_i)z^{n+m} - n_i z^nz^m = m_iz^{n+m}
\end{split}
\end{equation}
Since we have matched $\theta_i$ to $z_i \partial_i$, this is nothing but the natural action of vector fields on functions.
In the case $U = \C^*$, this simplifies to 
\begin{equation}
  \label{eq:lie-module-1}
  [\xi_i, z^j] = \Delta (\xi_iz^j) - \Delta(\xi_i) z^j + \xi_i \Delta(z^j) = \Delta(\xi_{i+j}) - \Delta(\xi_i)z^j = (i+j)z^{i+j} - i z^iz^j = jz^{i+j}
\end{equation}
where $\xi_i = z^i\theta$ corresponds to $z^{i+1}\partial_z$.

It is possible to obtain different actions of $SH^1(U)$ on $SH^0(U)$ by taking this natural action, and adding to it a Lie algebra cocycle. Eventually, these cocycles will find their way into the definition of the cochains $c_L$ for equivariant Lagrangian branes, but we begin with an abstract discussion. If $\g$ is a Lie algebra, $M$ a vector space, and $\phi_0: \g \to \End(M)$ is a Lie algebra representation of $\g$ in $M$, we have a Chevalley--Eilenberg complex $C^*(\g,\End(M))$ of $\g$ with coefficients in $\End(M)$. If $\phi_1 : \g \to \End(M)$ is another representation, then the difference defines a one-cocycle
\begin{equation}
  \delta \phi = \phi_1- \phi_0 \in Z^1(\g, \End(M))
\end{equation}
We recall that cocycle condition for a one-cochain $\psi$ is
\begin{equation}
  \psi([x,y]) = x.\psi(y) - y.\psi(x)
\end{equation}
and the action of $\g$ on $\End(M)$ is the composition of $\phi_0$ with the adjoint action.

In our case $\g = SH^1(U)$ and $M = SH^0(U)$, so $M$ is actually an algebra. It is therefore natural to consider the map $L :M \to \End(M)$ given by left multiplication. In order for this to be a map of $\g$-modules, we need for $x \in \g$ and $a,b \in M$,
\begin{align}
  L(x.a)(b) &= [x,L(a)](b)\\
  (x.a)b &= x.(ab) - a(x.b)
\end{align}
That is to say, $\g$ acts on $M$ by derivations of the product. This always holds in the case of $SH^*(U)$, since the BV algebra axioms, as discussed in Section \ref{sec2-1}, imply that the Gerstenhaber bracket and product form an odd Poisson structure. The map $L : M \to \End(M)$ induces a map on Chevalley--Eilenberg complexes, and in particular
\begin{equation}
  Z^1(\g, M) \to Z^1(\g, \End(M))
\end{equation}

We are interested in cocycles that have a geometric origin, meaning that they arise from structures on $SH^*(U)$ that can in principle be computed from the symplectic geometry. For example, they could be expressible in terms of the BV algebra structure. The most obvious is the restriction of the BV operator $\Delta$ to $SH^1(U)$ mapping to $SH^0(U)$.
\begin{proposition}
\label{prop:bv-cocycle}
Let $A^\bullet$ be a BV algebra. Regard $A^0$ as a Lie module over the Lie algebra $A^1$. Then the BV operator $\Delta$ is a cocycle on $A^1$ with values in $A^0$:
\begin{equation}
  \Delta \in Z^1(A^1,A^0)
\end{equation}
\end{proposition}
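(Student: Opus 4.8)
The plan is to verify directly the Chevalley--Eilenberg one-cocycle condition for $\Delta\colon A^1 \to A^0$, namely that for all $x,y \in A^1$ one has
\[ \Delta([x,y]) = x.\Delta(y) - y.\Delta(x), \]
where the dot denotes the action of $A^1$ on $A^0$ via the Gerstenhaber bracket, i.e.\ $x.b = [x,b]$ for $b \in A^0$. So the claim to be proved is the identity $\Delta([x,y]) = [x,\Delta(y)] - [y,\Delta(x)]$ for degree-one elements $x,y$.

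First I would recall the two structural identities available from the BV algebra axioms as listed in Section \ref{sec2-1}: the relation expressing the bracket through $\Delta$ and the product,
\[ [x,y] = \Delta(x\cup y) - \Delta(x)\cup y - (-1)^{|x|} x \cup \Delta(y), \]
and the Poisson (derivation) property $[x, y\cup z] = [x,y]\cup z + (-1)^{(|x|-1)|y|} y \cup [x,z]$. I would also use the basic facts that $\Delta^2 = 0$ and that the bracket is (graded) antisymmetric and satisfies the graded Jacobi identity; these are all recorded in the excerpt. The computation is then a short manipulation: apply $\Delta$ to the first identity with $|x| = |y| = 1$ to get $\Delta[x,y] = -\Delta(\Delta(x)\cup y) + \Delta(x\cup\Delta(y))$ (using $\Delta^2 = 0$), then re-expand each of the two terms on the right using the same bracket-product identity, now applied to the pair $(\Delta(x), y)$ (degrees $0$ and $1$) and to the pair $(x,\Delta(y))$ (degrees $1$ and $0$). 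After cancelling the $\Delta^2$ terms again and using graded antisymmetry of the bracket, the surviving terms should organize exactly into $[x,\Delta(y)] - [y,\Delta(x)]$, which is $x.\Delta(y) - y.\Delta(x)$ by the definition of the module action.

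The only real bookkeeping obstacle is tracking the signs correctly, since two of the four elements involved ($\Delta(x)$ and $\Delta(y)$) have degree $0$ and two have degree $1$, and the bracket-product identity and the Poisson rule both carry degree-dependent signs. I would handle this by fixing the sign conventions of \cite{seidelsphere} as in the excerpt and simply substituting the degrees $0$ and $1$ explicitly at each step rather than carrying general $|x|$; with $|x|=|y|=1$ fixed from the outset, most signs become $\pm 1$ in a transparent way. One should also double-check the edge case that $\Delta(x)\cup\Delta(y)$-type terms (degree $0$) behave as expected, but these do not actually arise since each re-expansion introduces at most one further $\Delta$, and $\Delta^2 = 0$ kills the doubly-differentiated contributions. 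An alternative, essentially equivalent route is to invoke the fact that in a BV algebra the triple $(\cup, [\,,], \Delta)$ forms a differential Gerstenhaber (BV) algebra, so $\Delta$ is a derivation of the bracket up to the standard correction term; specializing that identity to degree-one arguments again yields the cocycle condition. I would present the first, direct computation as the proof, as it is self-contained given only what has already been stated.
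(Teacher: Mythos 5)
Your proposal is correct and takes essentially the same route as the paper: both arguments reduce the cocycle identity to the defining relation $[a,b]=\Delta(a\cup b)-\Delta(a)\cup b-(-1)^{|a|}a\cup\Delta(b)$ together with $\Delta^2=0$ and graded commutativity/antisymmetry, just reading the manipulation from the left-hand side (yours) rather than expanding the right-hand side (the paper's). One small correction to your sketch: the $\Delta(x)\cup\Delta(y)$-type terms \emph{do} arise --- each re-expansion of $\Delta(\Delta(x)\cup y)$ and of $\Delta(x\cup\Delta(y))$ contributes one --- but they enter with opposite signs and cancel (the paper sees the same cancellation via graded commutativity), so the conclusion is unaffected; only the doubly-differentiated terms $\Delta^2(x)\cup y$ and $x\cup\Delta^2(y)$ vanish outright.
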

\begin{proof}
  The cocycle condition reads, for $x,y \in A^1$, 
\begin{equation}
\label{eq:bv-cocycle}
  \Delta([x,y]) = [x,\Delta(y)] - [y,\Delta(x)]
\end{equation}
The verification of this condition uses the definition of the bracket in terms of $\Delta$ and the fact that $\Delta^2 = 0$. The left-hand side becomes
\begin{equation}
\label{eq:bv-lhs}
  \Delta\left\{\Delta(xy) - \Delta(x)y + x\Delta(y)\right\} = -\Delta(\Delta(x)y) + \Delta(x\Delta(y))
\end{equation}
The first term on the right-hand side of \eqref{eq:bv-cocycle} becomes
\begin{equation}
  \label{eq:bv-rhs}
  \Delta(x\Delta(y)) - \Delta(x)\Delta(y) + x\Delta^2(y) = \Delta(x\Delta(y)) - \Delta(x)\Delta(y)
\end{equation}
The full right-hand side is \eqref{eq:bv-rhs} minus the same expression with $x$ and $y$ swapped. That agrees with \eqref{eq:bv-lhs} once we use the graded commutativity of $A^\bullet$. 
\end{proof}

In the mirror interpretation, where $U^\vee$ is the mirror variety to $U$, equipped with a volume form $\Omega$, the condition \eqref{eq:bv-cocycle} is the fact that the divergence operator $\Div_\Omega$ defines a cocycle on the Lie algebra of vector fields $\Vect(U^\vee)$ with values in functions $\cO(U^\vee)$. The mirror interpretation suggests many other cocycles as well. For example, any differential form $\omega \in \Omega^p(U^\vee)$ defines a Lie algebra cochain $\omega \in C^p(\Vect(U^\vee),\cO(U^\vee))$. In fact, this association defines a chain map of the de Rham complex of $U^\vee$ into the Chevalley--Eilenberg complex of $\Vect(U^\vee)$ with values in $\cO(U^\vee)$ (compare the coordinate-free formula for the exterior differential to the Chevalley--Eilenberg differential). Thus, closed forms also yield cocycles:
\begin{equation}
  \Omega^p_{\text{closed}}(U^\vee) \to Z^p(\Vect(U^\vee),\cO(U^\vee))
\end{equation}
Note that the divergence cocycle is not of this form, since it is a first-order differential operation, whereas evaluation of a differential form is zeroth-order.

In terms of the BV algebra structure, the evaluation $df(x)$ for a vector field $x$ and a function $f$ corresponds to $[x,f]$. Thus for any degree element $f$ of a BV algebra we obtain a cocycle $[-,f]$ (indeed, a coboundary). More interestingly, we can also obtain cocycles from ``logarithmic one-forms'' such as $f^{-1}\,df$.
\begin{proposition}
\label{prop:log-cocycle}
  Let $A^\bullet$ be a BV algebra, and $f \in A^0$ an invertible element. Then the operator $f^{-1}[-,f] : A^1\to A^0$ is a Lie algebra cocycle.
\end{proposition}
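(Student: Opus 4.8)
The plan is to verify the Chevalley--Eilenberg cocycle condition for $\psi(x) := f^{-1}[x,f]$ directly from the Gerstenhaber/BV axioms recorded in Section \ref{sec2-1}, using the Lie module structure of $A^0$ over $A^1$ given by $x\cdot g = [x,g]$ (the structure already exploited in Section \ref{sec:witt-sh}). Concretely, for $x,y\in A^1$ one must show
\[
  \psi([x,y]) = [x,\psi(y)] - [y,\psi(x)].
\]
The entire computation takes place in degrees $0$ and $1$, so every Koszul sign that could intervene is $+1$; this is the one point where a little care pays off, and it is worth recording at the outset.

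First I would expand the right-hand side. Since $x\in A^1$, so that $|x|-1=0$, the Poisson property makes $[x,-]$ an ordinary derivation of the commutative product on $A^0$, hence
\[
  [x,\psi(y)] = [x,f^{-1}[y,f]] = [x,f^{-1}]\,[y,f] + f^{-1}[x,[y,f]],
\]
and symmetrically for $[y,\psi(x)]$. Subtracting, the right-hand side splits into a \emph{Jacobi piece} $f^{-1}\big([x,[y,f]] - [y,[x,f]]\big)$ and a \emph{leftover piece} $[x,f^{-1}][y,f] - [y,f^{-1}][x,f]$.

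Next I would dispose of the Jacobi piece: by the graded Jacobi identity --- equivalently, by the fact that $A^1$ acts on $A^0$ as a Lie algebra via the bracket --- one has $[x,[y,f]] - [y,[x,f]] = [[x,y],f]$, so the Jacobi piece is exactly $f^{-1}[[x,y],f] = \psi([x,y])$, which is the left-hand side. It then remains only to kill the leftover piece. Applying the derivation $[x,-]$ to the identity $f f^{-1} = 1$, together with $[x,1]=0$, yields $[x,f^{-1}] = -f^{-2}[x,f]$; substituting this, the leftover piece equals $-f^{-2}\big([x,f][y,f] - [y,f][x,f]\big)$, which vanishes because $A^0$ is commutative. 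This completes the verification.

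I do not expect a genuinely hard step here: the argument is three lines of Gerstenhaber calculus (derivation rule, Jacobi, graded commutativity in degree zero), and the only thing demanding attention is confirming that the signs in the Poisson and Jacobi relations are all trivial when the arguments sit in degrees $0$ and $1$. For context I would also remark that this computation is the algebraic shadow of the classical fact that the logarithmic one-form $f^{-1}\,df = d\log f$ is closed, so that it defines a cocycle in $C^1(\Vect(U^\vee),\cO(U^\vee))$ under the chain map $\Omega^\bullet(U^\vee) \to C^\bullet(\Vect(U^\vee),\cO(U^\vee))$ discussed just above.
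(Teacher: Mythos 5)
Your proof is correct and follows essentially the same route as the paper's: expand the right-hand side with the Poisson (derivation) property, use $[x,f^{-1}]=-f^{-2}[x,f]$ (from differentiating $ff^{-1}=1$) together with commutativity of $A^0$ to kill the cross terms, and recognize the remainder as $f^{-1}$ times the cocycle condition for $[-,f]$, i.e.\ the Jacobi/Lie-module identity. No substantive difference from the paper's argument.
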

\begin{proof}
We need to show
\begin{equation}
  \label{eq:log-cocycle}
  f^{-1}[[x,y],f] = [x,f^{-1}[y,f]] - [y,f^{-1}[x,f]]
\end{equation}
First of all, the equation $f^{-1}f = 1$ and the Poisson property implies
\begin{equation}
 0 = [x,1] = [x,f^{-1}]f + f^{-1}[x,f]
\end{equation}
so $[x,f^{-1}] = -f^{-2}[x,f]$. Applying the Poisson property to the two terms on the right hand side of \eqref{eq:log-cocycle} gives us terms $[x,f^{-1}][y,f] - [y,f^{-1}][x,f]$, which cancel out. The remaining terms in the equation are the cocycle condition for $[-,f]$, multiplied by $f^{-1}$. 
\end{proof}

 The variety $\Gm^r$ carries invertible functions $z_1,\dots,z_n$, and the algebraic volume form $\Omega = \prod_{i=1}^n\frac{dz_1}{z_1}$. These give a collection of cocycles in $H^1(\Vect(\Gm^r),\cO(\Gm^r))$
\begin{equation}
  \xi \mapsto \frac{dz_1}{z_1}(\xi),\ \dots,\ \xi \mapsto \frac{dz_r}{z_r}(\xi),\ \xi \mapsto \Div_\Omega \xi
\end{equation}
These correspond, in $H^1(SH^1((\C^*)^r),SH^0((\C^*)^r))$ to the cocycles
\begin{equation}
  \xi \mapsto z_1^{-1}[\xi,z_1],\ \dots,\ \xi \mapsto z_r^{-1}[\xi,z_r],\ \xi \mapsto \Delta(\xi)
\end{equation}

\begin{remark}
In the case of $U = (\C^*)^r$, we expect that the cocycles obtained from the BV operator and invertible elements are a complete set. We quote the following general result:
\begin{theorem}[\cite{fuks}, Theorem 2.4.11]
  Let $M$ be a smooth oriented manifold, and let $\omega_1,\dots, \omega_k$ be closed one-forms on $M$ whose cohomology classes form a basis of $H^1(M;\R)$. Let $\Div$ denote the divergence with respect to a volume form on $M$. Then $H^1(\Vect(M), C^\infty(M))$ has a basis consisting of the cohomology classes of the cocycles
  \begin{equation}
    \xi \mapsto \omega_1(\xi),\ \dots,\ \xi \mapsto \omega_k(\xi),\ \xi \mapsto \Div \xi
  \end{equation}
\end{theorem}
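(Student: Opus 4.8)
The plan is to prove $H^1(\Vect(M),C^\infty(M))\cong H^1_{\mathrm{dR}}(M)\oplus\R$ in a way that exhibits the displayed cocycles as a basis, with $H^1_{\mathrm{dR}}(M)=H^1(M;\R)$ contributed by evaluation of closed one-forms and the extra $\R$ by the divergence. First I would record the easy half. Making $C^\infty(M)$ a $\Vect(M)$-module by $\xi\cdot f=\xi(f)$, Cartan's formula for $d\omega(\xi_0,\dots,\xi_p)$ is term-for-term the Chevalley--Eilenberg differential, so $\omega\mapsto(\xi\mapsto\omega(\xi))$ is a chain map $\Omega^\bullet(M)\to C^\bullet(\Vect(M),C^\infty(M))$; since a degree-one Chevalley--Eilenberg coboundary is exactly $\xi\mapsto\xi(g)=dg(\xi)$, the induced map on degree-one cohomology is injective. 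As $M$ is oriented it admits a global volume form $\mu$; then $\xi\mapsto\Div_\mu\xi$ is a cocycle (a direct check, or Proposition \ref{prop:bv-cocycle} read through the mirror dictionary), replacing $\mu$ by $e^h\mu$ alters it by the coboundary $\xi\mapsto\xi(h)$, and in a chart $\Div_\mu\xi=\sum_i\partial_i\xi^i+\xi^i\partial_i\log\rho$ involves the first jet of $\xi$ whereas form evaluations $\xi\mapsto\sum_i\omega'_i\xi^i$ and coboundaries are zeroth order; comparing the coefficient of $\partial_1\xi^1$ shows $[\Div_\mu]$ is neither exact nor in the image of $H^1_{\mathrm{dR}}(M)$. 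This produces an injection $H^1_{\mathrm{dR}}(M)\oplus\R\hookrightarrow H^1(\Vect(M),C^\infty(M))$, and the theorem is precisely that it is onto.

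Surjectivity has two non-formal ingredients. The first is a \emph{locality reduction}: every $1$-cocycle $c$ is cohomologous to a differential operator, i.e. one with $c(\xi)|_U$ depending only on a finite jet of $\xi|_U$. Support-non-increasingness is extracted from the cocycle identity $c([\xi,\eta])=\xi(c(\eta))-\eta(c(\xi))$ by plugging in $\eta$ compactly supported in a chart disjoint from $\operatorname{supp}\xi$, and bounded order then follows by a Peetre-type argument; this is carried out in \cite{fuks} in the general framework for cohomology of $\Vect(M)$ with tensor-module coefficients, and is the technical heart of the statement.

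The second ingredient is the \emph{local classification} over a ball. A local $1$-cocycle decomposes into homogeneous components under the Euler field $E=\sum_i x^i\partial_i$, acting on $\Vect(\R^n)$ by $[E,-]$ and on functions by differentiation; each component is again a cocycle, and on the Chevalley--Eilenberg complex the Cartan operator $d\iota_E+\iota_E d$ acts on the weight-$w$ part as multiplication by $w$, so every nonzero-weight component is a coboundary. One is thus reduced to the weight-zero (equivalently, formal) model $W_n$ with coefficients in formal power series, where a finite equivariant jet computation --- the top-order symbol is a $\mathrm{GL}_n$-equivariant map out of jets of vector fields, the only invariant being the trace of the Jacobian --- gives that $H^1$ of this model is one-dimensional, spanned by $[\Div]$, all other classes coming from evaluation against the (automatically exact) closed one-forms on the ball. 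Hence, modulo coboundaries, $c|_{\text{ball}}=\lambda\,\Div+dg(\cdot)$ with $\lambda$ constant.

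Finally I would globalize. On a connected component, comparing first-order symbols on overlaps of charts forces the local constants $\lambda$ to agree, so after subtracting $\lambda\,\Div_\mu$ the cocycle $c'=c-\lambda\,\Div_\mu$ is, on each ball $U$ of a good cover, a coboundary $\xi\mapsto\xi(g_U)$; the differences $g_U-g_V$ are then locally constant, giving a \v{C}ech $1$-cocycle of the constant sheaf $\R$, i.e. a class in $H^1(M;\R)=H^1_{\mathrm{dR}}(M)$, represented by a global closed one-form $\omega$ with $c'$ cohomologous to $\xi\mapsto\omega(\xi)$ (equivalently, fineness of $C^\infty$ lets one correct the local primitives to a global function plus a global closed form). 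Writing $[\omega]=\sum_i a_i[\omega_i]$, we conclude that $c$ is cohomologous to $\lambda\,\Div+\sum_i a_i\,\omega_i(\cdot)$, which together with the independence established above gives the asserted basis. The main obstacle is the locality reduction together with the equivariant jet computation for $W_n$; everything else is bookkeeping.
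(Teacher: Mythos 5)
There is nothing in the paper to compare your argument against: the statement is not proved there at all, but quoted verbatim (inside a remark) as Theorem 2.4.11 of Fuks's book \cite{fuks}, which is where the actual proof lives. Judged on its own, your sketch is a faithful outline of that standard Gelfand--Fuks-style argument. The independence half is complete and correct: the Cartan-formula chain map from the de Rham complex, the identification of degree-one Chevalley--Eilenberg coboundaries with $\xi\mapsto \xi(g)=dg(\xi)$, the change $\Div_{e^h\mu}=\Div_\mu+\xi\mapsto\xi(h)$, and the first-order-versus-zeroth-order symbol argument separating $[\Div]$ from the image of $H^1(M;\mathbb{R})$ are all sound. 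For surjectivity you have correctly isolated the two genuinely hard inputs, but those are precisely the content you are citing rather than supplying: the reduction of an arbitrary cocycle to a local (differential-operator) one via support considerations and a Peetre-type bound, and the computation that $H^1$ of the formal model $W_n$ with formal-function coefficients is one-dimensional, spanned by the divergence. Moreover the passage from a local smooth cochain to the weight-zero formal model is breezier than it looks: the Euler-weight decomposition of a smooth local cochain is an infinite Taylor-type decomposition, so the homotopy argument $c=\tfrac1w\,d\iota_E c$ in nonzero weight needs the continuity/locality framework of \cite{fuks} to be legitimate, and the equivariant jet computation for $W_n$ is asserted rather than carried out. The globalization step (agreement of the local constants $\lambda$ on a connected manifold, $g_U-g_V$ locally constant, \v{C}ech class in $H^1(M;\mathbb{R})$ corrected to a global closed one-form) is fine, though note the statement implicitly takes $M$ connected, since otherwise each component contributes its own divergence class. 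In short: as a reconstruction of the cited proof your proposal is accurate and well organized, but as a self-contained proof it has real gaps exactly at the two steps you flag and defer to Fuks.
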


Our desired result is a version of this for algebraic vector fields and functions on $\Gm^r$, which is an algebraic version of $(\C^*)^r$, which is a complexification of the smooth manifold $T^n = (S^1)^n$.
\end{remark}

\subsection{The case of $\C^*$ and restriction to $\fraksl_2$}
\label{sec:restriction}

We will now specialize Propositions \ref{prop:bv-cocycle} and \ref{prop:log-cocycle} to the case where the BV algebra is $A^\bullet = SH^\bullet(\C^*)$, and see how these cocycles allow us to obtain a number of interesting representations of $SH^1(\C^*) = \Vect(\Gm)$. Recall that $\xi_n = z^n\theta = z^{n+1}\partial_z$. The BV cocycle $\Delta$ gives
\begin{equation}
  \Delta(\xi_n) = nz^n .
\end{equation}
The element $z \in SH^0(\C^*)$ is invertible, and the logarithmic cocycle $z^{-1}[-,z]$ gives
\begin{equation}
  z^{-1}[\xi_n,z] = z^n.
\end{equation}
 
Since cocycles are a linear space, we have for any scalars $\alpha$ and $\beta$ a cocycle $\alpha \Delta + \beta z^{-1}[-,z]$. Adding this cocycle to the reference representation $\phi_0$ (which is the natural action of $SH^1(\C^*)$ on $SH^0(\C^*)$ by bracket) yields a representation $\rho_{\alpha,\beta}$ of $SH^1(\C^*)$ on $SH^0(\C^*)$ given by
\begin{equation}
 \rho_{\alpha,\beta}(\xi_i)z^j = (j+\alpha i + \beta)z^{i+j}.
\end{equation}
Observe from this formula that $z^{i} \mapsto z^{i+m}$ is an isomorphism between the representations $\rho_{\alpha,\beta}$ and $\rho_{\alpha,\beta+m}$. Thus integral shifts in $\beta$ do not affect the representation up to isomorphism, though the fractional part of $\beta$ is important.

On the mirror side, these representations correspond to those obtained from densities on $\Gm$. Indeed, the Witt algebra acts on $V_{\alpha,\beta}$, the space of densities of the form 
\begin{equation}
  P(z) z^\beta(dz/z)^\alpha
\end{equation}
where $P(z)$ is a Laurent polynomial \cite{kac-raina}. These representations are reducible for the Witt algebra if and only if $\beta \in \Z$ and $\alpha$ is $0$ or $1$ \cite[p.~6]{kac-raina}. Every irreducible representation of the Witt algebra (indeed of the Virasoro algebra) in which $\xi_0$ acts semisimply with finite dimensional eigenspaces is either a subquotient of some $V_{\alpha,\beta}$, or a highest-weight or lowest-weight representation \cite{mathieu}, as originally conjectured by Kac in 1982. The irreducible subquotients of $V_{\alpha,\beta}$ are distinguished within this class by the property that all of the $\xi_0$-eigenspaces have dimension less than or equal to one \cite{kaplansky}.

The map
\begin{equation}
  \K[z,z^{-1}] \to V_{\alpha,\beta}, \quad z^j \mapsto z^jz^\beta(dz/z)^\alpha
\end{equation}
intertwines the representation $\rho_{\alpha,\beta}$ on $\K[z,z^{-1}]$ and the natural action on densities by Lie derivative. Note that $\alpha$ and $\beta$ are elements of $\K$, so the Lie derivative must be interpreted in a formal sense.

With some representations of the Witt algebra ($\cong SH^1(\C^*)$) in hand, the next step is to consider the restriction to the finite dimensional subalgebra $\fraksl_2 \cong \Span \{\xi_{-1},\xi_0,\xi_1\}$. We regard $\xi_0$ as the weighting operator, $\xi_1$ as the raising operator, and $\xi_{-1}$ as the lowering operator. Note that these conventions may differ from what is found in the literature. In this section we prove:
\begin{theorem}
  The representation $V_{\alpha,\beta}$ of the Witt algebra, when restricted to the subalgebra $\fraksl_2$, contains a finite dimensional $\fraksl_2$ submodule precisely when $\alpha$ is a non-positive half-integer, and $\alpha+\beta$ is an integer. When these conditions hold, the submodule is unique, and it has dimension $(-2\alpha + 1)$, meaning that it is isomorphic to the representation of $\fraksl_2$ in degree $-2\alpha$ homogeneous polynomials in two variables.  
\end{theorem}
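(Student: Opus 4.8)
The plan is to reduce the whole statement to bookkeeping in the monomial basis. I would identify $V_{\alpha,\beta}$ with $\K[z,z^{-1}]$ via $z^j\leftrightarrow z^jz^\beta(dz/z)^\alpha$, write $v_j$ for the basis vector $z^j$, and read off the action of the $\fraksl_2$-triple $e=\xi_1$, $h=2\xi_0$, $f=-\xi_{-1}$ from the formula $\rho_{\alpha,\beta}(\xi_i)z^j=(j+\alpha i+\beta)z^{i+j}$:
\[ e\cdot v_j=(j+\alpha+\beta)v_{j+1},\quad h\cdot v_j=2(j+\beta)v_j,\quad f\cdot v_j=-(j-\alpha+\beta)v_{j-1}. \]
Thus each $v_j$ is an $h$-eigenvector of weight $2(j+\beta)$, and distinct $v_j$ have distinct weights. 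The first step is the observation that any $\fraksl_2$-submodule $W\subseteq V_{\alpha,\beta}$, being stable under the diagonalizable operator $h$ with simple spectrum, must be a coordinate subspace $W=\bigoplus_{j\in S}\K v_j$ for some $S\subseteq\Z$.

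Next I would prove necessity of the two conditions. If $W\neq0$ is finite-dimensional then $S$ is finite and nonempty; set $j_{\max}=\max S$, $j_{\min}=\min S$. Stability under $e$ forces $e\cdot v_{j_{\max}}=0$, hence $j_{\max}+\alpha+\beta=0$, and stability under $f$ forces $f\cdot v_{j_{\min}}=0$, hence $j_{\min}-\alpha+\beta=0$. Therefore $\alpha+\beta=-j_{\max}\in\Z$ and $\alpha-\beta=j_{\min}\in\Z$, so $2\alpha\in\Z$; moreover $j_{\max}-j_{\min}=-2\alpha\geq0$ gives $\alpha\leq0$. So $\alpha$ is a non-positive half-integer and $\alpha+\beta\in\Z$, as claimed.

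For sufficiency, uniqueness, and the dimension count, write $\alpha=-m/2$ with $m\in\Z_{\geq 0}$, assume $\alpha+\beta\in\Z$, and put $j_0=-\alpha-\beta\in\Z$. Substituting into the formulas above gives $e\cdot v_{j_0-k}=-k\,v_{j_0-k+1}$ and $f\cdot v_{j_0-k}=(k-m)\,v_{j_0-k-1}$, so the span of $v_{j_0},v_{j_0-1},\dots,v_{j_0-m}$ is closed under $e$, $h$, $f$ (the only coefficients vanishing on this range occur at $k=0$ for $e$ and $k=m$ for $f$), and hence is an $(m+1)$-dimensional submodule; it is irreducible since all its weight spaces are one-dimensional, so, because $m+1=-2\alpha+1$, it is the representation on degree $-2\alpha$ homogeneous polynomials in two variables. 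For uniqueness, any finite-dimensional nonzero submodule $W$ has, by the previous paragraph, $\max S=j_0$ and $\min S=j_0-m$; and since $f\cdot v_{j_0-k}\neq0$ for $0\leq k<m$, membership of $v_{j_0}$ in $W$ forces $v_{j_0-1},\dots,v_{j_0-m}\in W$, so $S=\{j_0-m,\dots,j_0\}$ and $W$ is exactly the module just constructed.

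I do not expect a serious obstacle: all the content sits in the three displayed action formulas and the rest is elementary linear algebra. The two places that deserve care are (i) the necessity argument, which must pin down $\alpha+\beta\in\Z$ and $\alpha\leq0$ simultaneously — ruling out, say, a positive half-integer $\alpha$ — which is why I extract both a highest-weight condition from $j_{\max}$ and a lowest-weight condition from $j_{\min}$ before combining them; and (ii) the sign and coefficient arithmetic in $f\cdot v_{j_0-k}=(k-m)v_{j_0-k-1}$, since the termination of this chain exactly at $k=m$ (and its non-termination before that) is precisely what forces the finite-dimensional submodule to exist and to have the stated dimension.
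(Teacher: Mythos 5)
Your proposal is correct and follows essentially the same route as the paper: one-dimensional weight spaces force any finite-dimensional submodule to have a top vector killed by $\xi_1$ and a bottom vector killed by $\xi_{-1}$, which yields $\alpha+\beta\in\Z$, $\alpha-\beta\in\Z$, $\alpha\le 0$, and the monomial string between the two kernel positions gives the unique $(-2\alpha+1)$-dimensional submodule. The only difference is that you make explicit some steps the paper leaves terse (the Vandermonde-type reduction to coordinate subspaces, the coefficient computation $e\cdot v_{j_0-k}=-k\,v_{j_0-k+1}$, $f\cdot v_{j_0-k}=(k-m)v_{j_0-k-1}$, and the uniqueness argument), which is a welcome sharpening rather than a different method.
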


\begin{proof} The general problem we are faced with is, given an infinite-dimensional representation of the Witt algebra, with finite-dimensional weight spaces, to determine whether it contains a finite-dimensional representation of $\fraksl_2$. For this, a necessary condition is that $\xi_1$ has a nontrivial kernel, and the same for $\xi_{-1}$. In the case of the representations $\rho_{\alpha,\beta}$, the weight spaces are all one-dimensional, with all weights from the set $\{j + \beta \mid j \in \Z\}$ appearing. The operator $\xi_1$ raises the weight by $1$, and it has a kernel if and only if the quantity $j + \alpha + \beta$ vanishes (meaning the weight is $j+\beta =-\alpha$). This yields the necessary condition $\alpha + \beta \in \Z$. Similarly, $\xi_{-1}$ lowers weight by one, and it has a kernel if and only if $j - \alpha + \beta$ vanishes (meaning the weight is $j+\beta = \alpha$), yielding the necessary condition $-\alpha + \beta \in \Z$. That $\alpha+ \beta$ and $-\alpha + \beta$ both be integral is equivalent to the condition that $\alpha$ and $\beta$ are half-integral, either both integral or both strictly half-integral, or put another way, that $2\alpha$ and $2\beta$ are integral and of the same parity.

Another necessary condition is that the weight space at which $\xi_1$ has a kernel, namely the space of weight $-\alpha$, must be at a higher weight than the weight space at which $\xi_{-1}$ has a kernel, namely the space of weight $\alpha$. Thus we need $-\alpha \geq \alpha$, that is, $\alpha \leq 0$. These restrict the possibilities to $\alpha \in \frac{1}{2}\Z$, $\alpha \leq 0$, $\alpha + \beta \in \Z$. Since integral shifts in $\beta$ do not change the representation up to isomorphism, we may simply take $\beta = \alpha$ in every case. With this convention, we are always looking at densities of the form $P(z)\,dz^\alpha$. 

For each $\alpha \leq 0$, with $\alpha \in \frac{1}{2}\Z$, and choosing $\beta = \alpha$, we do indeed obtain a finite-dimensional representation of $\fraksl_2$. The operator $\xi_1$ has a kernel in the weight space $j = -2\alpha$, while $\xi_{-1}$ has a kernel in the weight space $j = 0$. The weights of these vectors under $\xi_0$ are $j + \alpha$, which therefore runs from $\alpha \leq 0$ to $-\alpha \geq 0$. Since there is an essentially unique representation of $\fraksl_2$ with these weights, what we have found is none other than the irreducible representation of $\fraksl_2$ of dimension $(-2\alpha + 1)$.
\end{proof}

Let us consider the mirror interpretation of this. In the standard Borel--Weil picture, the irreducible representations of $\fraksl_2$ are found in the spaces of global sections of the line bundles $\cO_{\PP^1}(n)$ on $\PP^1$ for $n \geq 0$. To connect with the above, set $n = -2\alpha$. When we restrict the line bundles $\cO_{\PP^1}(n)$ to $\Gm \subset \PP^1$, they all become isomorphic to the structure sheaf $\cO_{\Gm}$, but they retain different infinitesimal $\fraksl_2$--equivariant structures. There is a natural reason why densities arise when we want to recover these different structures. If $X$ is a variety and $D$ is an anticanonical divisor, then the canonical bundle $K_X$ is trivial on $X\setminus D$, and any two vector bundles that differ by tensoring with the canonical bundle become isomorphic on $X \setminus D$. The divergence cocycle is what allows us to recover the different equivariant structures given by tensoring with the anticanonical bundle, and shifting the action by the divergence cocycle corresponds to multiplying the sections by the volume form $\Omega= \frac{dz}{z}$.

 In the case of $\PP^1$, the canonical bundle has a square root, which is why we must consider densities of half-integral weight in order to obtain equivariant structures corresponding to all of the line bundles on $\PP^1$. For example, the sections of the line bundle $\cO_{\PP^1}(1)$ correspond to the densities of weight $-1/2$, such as $P(z)\,dz^{-1/2}$, where $P(z)$ is a polynomial of degree at most 1. Upon restriction to $\Gm$, $P(z)$ is allowed to be a Laurent polynomial.

\section{Action on Lagrangian branes}
\label{sec:brane-action}

\subsection{Equivariant structures on a single Lagrangian}
\label{sec:single-lag}

Let $L$ denote the real positive locus $(\R_+)^r$ contained in $(\C^*)^r$. This Lagrangian submanifold is also known as a cotangent fiber of $T^*T^r$. We will apply the general theory of equivariant Lagrangian branes developed in Section \ref{theory} to this object. We will find that $L$ is $SC^1((\C^*)^r)$-invariant, and can be made $SC^1((\C^*)^r)$-equivariant in several essentially different ways, meaning that it supports several equivariant structures. When restricting to the case $r = 1$, we then recover representations of $\fraksl_2$ by taking the hom space between two copies $L$ equipped with different equivariant structures.

Recall the maps
\begin{equation}
  \Phi^0_1 : SC^*((\C^*)^r) \to CW^*(L,L)
\end{equation}
\begin{equation}
\Phi^1_1 : SC^1((\C^*)^r) \to End(CW^*(L,L)) 
\end{equation}
Since these maps are somewhat sensitive to the perturbation scheme chosen, we spell out the assumptions that we need for our computations:
\begin{enumerate}
\item \label{assump:sh} The symplectic chain complex $SC^*((\C^*)^r)$ is concentrated in degrees zero through $n$, and the differential vanishes.
\item \label{assump:hw} The wrapped Floer complex $CW^*(L,L)$ is concentrated in degree zero, and hence has vanishing differential.
\item \label{assump:comm} The ring structure on $CW^0(L,L)$ is commutative.
\item \label{assump:coiso} The map $\Phi^0_1: SC^0((\C^*)^r) \to CW^0(L,L)$ is an isomorphism of commutative rings.
\end{enumerate}
Assumption \ref{assump:sh} may be achieved by a suitable time-dependent perturbation of a Morse-Bott model for symplectic cohomology. Assumption \ref{assump:hw} is achieved by working with a Hamiltonian that is suitably convex. In light of assumption \ref{assump:hw}, the next assumption \ref{assump:comm} makes sense, since the chain complex $CW^0(L,L)$ is isomorphic to its cohomology. Then assumptions \ref{assump:comm} and \ref{assump:coiso} are computations, which follow from the results of Abouzaid on the wrapped Floer cohomology of cotangent fibers \cite{abouzaid-based}. Explicitly, $HW^0(L,L)$ is isomorphic to the based loop-space homology of $T^r$, which is the Laurent polynomial ring $\K[z_1^{\pm 1},\dots,z_n^{\pm 1}]$, justifying assumption \ref{assump:comm}. Furthermore, $SH^0((\C^*)^r)$ is isomorphic to the same ring, and the map $\Phi_1^0$ is homogeneous with respect to the $H_1((\C^*)^r;\Z)$ grading that corresponds to the natural $\Z^r$ grading on the Laurent polynomial ring. Then assumption \ref{assump:coiso} follows from the fact that a homogeneous unital homomorphism from the Laurent polynomial ring to itself must be an isomorphism.

As before, we denote by $z^n$ the generator $SC^0((\C^*)^r)$ in the homotopy class of loops representing $n \in H_1((\C^*)^r;\Z)$, and by $\xi_{n,i} = z^n\theta_i$, $i = 1,\dots, r$ the generators of $SC^1((\C^*)^r)$ in the same homotopy class. To avoid confusion, we use a subscript $L$ for the generators of $CW^0(L,L)$, so that $z^n_L$ denotes the generator corresponding to a Hamiltonian chord. Then assumption \ref{assump:coiso} means more precisely that $\Phi^0_1(z^n) = z_L^n$.

\begin{proposition}
  \label{prop:cstar-invariance}
  $L$ is $SC^1((\C^*)^r)$--invariant. An arbitrary map $c_L: SC^1((\C^*)^r) \to CW^0(L,L)$ satisfies $\mu^1\circ c_L = \Phi^0_1$.
\end{proposition}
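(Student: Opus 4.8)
The plan is to show that the statement is forced by Assumption \ref{assump:hw}, with essentially no geometric input. Recall that in the $\g$-invariance condition of Section \ref{theory} (applied with $\g = SH^1((\C^*)^r)$ and $K = L$), the relevant closed--open component is the degree-one part $\Phi^0_1 \colon SC^1((\C^*)^r) \to CW^1(L,L)$. First I would note that by Assumption \ref{assump:sh} the differential on $SC^*$ vanishes, so $SC^1((\C^*)^r) = SH^1((\C^*)^r)$ and the distinguished cochain representatives are literally the generators $\xi_{n,i} = z^n\theta_i$; there is no ambiguity in the choice of lift.

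Next I would invoke Assumption \ref{assump:hw}: $CW^*(L,L)$ is concentrated in degree zero, so $CW^1(L,L) = 0$. Consequently $\Phi^0_1$ restricted to $SC^1((\C^*)^r)$ is the zero map. At the same time the Floer differential $\mu^1$ on $CW^*(L,L)$ vanishes --- in particular the map $\mu^1\colon CW^0(L,L)\to CW^1(L,L)$ is zero, its target being trivial. Therefore for \emph{any} $\K$-linear map $c_L\colon SC^1((\C^*)^r)\to CW^0(L,L)$ one has $\mu^1\circ c_L = 0 = \Phi^0_1$, which is exactly the $\g$-invariance condition; taking $c_\alpha = c_L(\xi_{n,i})$ shows that $L$ is $SC^1((\C^*)^r)$--invariant and proves the second assertion at the same time.

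The only point requiring care --- a bookkeeping matter rather than a genuine obstacle --- is to keep straight which instance of the symbol $\Phi^0_1$ is meant: in the definition of $\g$-invariance it denotes the degree-one piece of $\Phi^0_1\colon SC^*((\C^*)^r)\to CW^*(L,L)$, landing in $CW^1(L,L)$, not the degree-zero isomorphism $SC^0\to CW^0(L,L)$ of Assumption \ref{assump:coiso}. Once this is pinned down, the argument needs no moduli counts: the content of the proposition is simply that the chosen perturbation scheme makes invariance automatic, so that all the substance of the equivariant theory for $L$ will reside in the \emph{equivariance} condition, which is what the subsequent propositions analyze.
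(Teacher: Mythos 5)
Your argument is correct and coincides with the paper's own proof: both rest entirely on Assumption \ref{assump:hw}, which forces $CW^1(L,L)=0$, so that $\Phi^0_1$ restricted to $SC^1((\C^*)^r)$ vanishes and $\mu^1\circ c_L = 0 = \Phi^0_1$ for any choice of $c_L$. The additional remarks about the vanishing differential on $SC^*$ and the degree bookkeeping for $\Phi^0_1$ are harmless but not needed.
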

\begin{proof}
  We must show that $\Phi^0_1$ maps $SC^1((\C^*)^r)$ to coboundaries in $CW^1(L,L)$. This is clear because the target vector space is zero by assumption \ref{assump:hw}. The second assertion holds for the same reason.
\end{proof}

\begin{proposition}
  \label{prop:cstar-liemap}
  The map $\Phi^1_1: SC^1((\C^*)^r) \to End(CW^0(L,L))$ is a map of Lie algebras. The isomorphism $\Phi^0_1 : SC^0((\C^*)^r) \to CW^0(L,L)$ intertwines the action of $SC^1((\C^*)^r)$ on $SC^0((\C^*)^r)$ with $\Phi^1_1$.
\end{proposition}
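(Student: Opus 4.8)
The plan is to read off both statements from the chain-level identities of Section \ref{theory}, specialized to $(\C^*)^r$ via assumptions \ref{assump:sh}--\ref{assump:coiso}. The mechanism is that under these assumptions nearly every term in Propositions \ref{deg1}, \ref{lie}, and \ref{second} vanishes identically: the differential $d$ on $SC^*((\C^*)^r)$ is zero, the differential $\mu^1$ on $CW^*(L,L)$ is zero, and (as already noted in the proof of Proposition \ref{prop:cstar-invariance}) $\Phi^0_1$ annihilates $SC^1((\C^*)^r)$ because it is degree-preserving and lands in $CW^1(L,L) = 0$. Moreover, since $CW^*(L,L)$ is concentrated in degree zero, the term $\Phi^0_2(a,b)$ automatically lies in $CW^0(L,L)$ for $a,b \in SC^1$.

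For the first assertion I would apply Proposition \ref{lie} with $K = L$, with $a,b \in SC^1((\C^*)^r)$, and with $x \in CW^0(L,L)$. Every term involving $d$, $\mu^1$, $\Phi^0_1$ of a degree-one class, or the map $\Phi^1_2$ drops out by the observations above, and the identity collapses to
\[
\Phi^1_1([a,b])(x) = \Phi^1_1(a)\Phi^1_1(b)(x) - \Phi^1_1(b)\Phi^1_1(a)(x) + \mu^2(\Phi^0_2(a,b),x) - \mu^2(x,\Phi^0_2(a,b)).
\]
Since $\Phi^0_2(a,b)$ and $x$ both lie in the commutative ring $CW^0(L,L)$ (assumption \ref{assump:comm}), the last two terms cancel, leaving $\Phi^1_1([a,b]) = [\Phi^1_1(a),\Phi^1_1(b)]$ in $\End(CW^0(L,L))$, which is precisely the claim that $\Phi^1_1$ is a Lie algebra homomorphism. (Equivalently, the computation exhibits $(L,L)$ as a $\g$-equivariant pair in the sense of Definition \ref{def:equivariant-pair} with $c_L = 0$: the first two conditions there are vacuous, and the third is exactly the commutativity just invoked.)

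For the second assertion I would instead apply Proposition \ref{second} with $a \in SC^1((\C^*)^r)$ and $b \in SC^0((\C^*)^r)$. Now $\Phi^0_1(a) \in CW^1(L,L) = 0$ kills the term $\Phi^1_1(b)(\Phi^0_1(a))$, the terms $\Phi^0_2(da,b)$ and $\Phi^0_2(a,db)$ vanish because $d = 0$, and $\mu^1(\Phi^0_2(a,b)) = 0$; what survives is the single equation $\Phi^0_1([a,b]) = \Phi^1_1(a)(\Phi^0_1(b))$. Since the Lie module structure of $SC^0$ over $SC^1$ is by definition $a \cdot b = [a,b]$, this says exactly that the isomorphism $\Phi^0_1$ intertwines that action with $\Phi^1_1$. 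As a consistency check, this second assertion, combined with $\Phi^0_1$ being an isomorphism (assumption \ref{assump:coiso}), re-proves the first: $\Phi^1_1$ is then conjugate via $\Phi^0_1$ to $\operatorname{ad}\colon SC^1 \to \End(SC^0)$, which is a Lie algebra homomorphism since $SC^*((\C^*)^r) = SH^*((\C^*)^r)$ is a Gerstenhaber algebra.

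I expect the only real effort to be bookkeeping. One must verify that each discarded term genuinely vanishes from the identity in question and, more delicately, track the Koszul signs in Propositions \ref{lie} and \ref{second} carefully enough to confirm that the two $\mu^2(\cdot,\cdot)$ terms in the first computation occur with opposite signs (so that commutativity removes them) and that no residual sign appears in $\Phi^0_1([a,b]) = \Phi^1_1(a)(\Phi^0_1(b))$. Given the structural identities imported from Section \ref{theory}, there is no conceptual obstacle.
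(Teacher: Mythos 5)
Your proof is correct, and it overlaps with the paper's on the second assertion while taking a different primary route to the first. The paper's proof uses only Proposition \ref{second}: under assumptions \ref{assump:sh} and \ref{assump:hw} it reads $\Phi^0_1([a,b]) = \Phi^1_1(a)\Phi^0_1(b) + (-1)^{|a||b|}\Phi^1_1(b)\Phi^0_1(a)$, and plugging in $a \in SC^1$, $b \in SC^0$ with $\Phi^0_1(a)=0$ gives the intertwining identity $\Phi^0_1([a,b]) = \Phi^1_1(a)(\Phi^0_1(b))$ --- exactly your second paragraph --- and then the Lie-map statement is deduced precisely by your ``consistency check'': since $\Phi^0_1$ is an isomorphism (assumption \ref{assump:coiso}), $\Phi^1_1$ is conjugate to the bracket action of $SC^1$ on $SC^0$, which is known to be a Lie map. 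Your primary argument for the first assertion, via Proposition \ref{lie} specialized to $K=L$ (all $d$, $\mu^1$, $\Phi^1_2$ and $\Phi^2_1$ terms vanish, and the two $\mu^2(\Phi^0_2(a,b),\cdot)$ terms cancel by commutativity), is a genuinely different and valid route: it amounts to exhibiting $(L,L)$ as an equivariant pair with $c_L=0$ in the sense of Definition \ref{def:equivariant-pair}, and --- the real gain --- it does not use assumption \ref{assump:coiso} at all, only the vanishing assumptions and commutativity. This is in the same spirit as the remark the paper makes right after its proof, which offers an alternative argument (via $HH^1(\mathcal{L})$ being derivations of $CW^0(L,L)$) precisely to avoid relying on the known form of $\Phi^0_1$; your version achieves the same independence directly from the chain-level identities of Section \ref{theory}. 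The paper's route buys brevity (one identity suffices for both claims); yours buys slightly more generality for the Lie-map claim and fits the abstract equivariance framework more cleanly. Your sign bookkeeping checks out: with $|a|=|b|=1$, $|x|=0$ the surviving signs are $-\Phi^1_1(a)\Phi^1_1(b)(x) + \Phi^1_1(b)\Phi^1_1(a)(x)$ and the two $\mu^2$ terms indeed appear with opposite signs.
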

\begin{proof}
  Under assumptions \ref{assump:sh} and \ref{assump:hw}, Proposition \ref{second} reads
  \begin{equation}
    \Phi^0_1([a,b]) = \Phi^1_1(a)\Phi^0_1(b) + (-1)^{|a||b|}\Phi^1_1(b)\Phi^0_1(a) 
  \end{equation}
Now we apply this to $a = z^m\theta_i \in SC^1((\C^*)^r)$ and $b = z^n \in SC^0((\C^*)^r)$. Since $\Phi^0_1(a) = 0$, this reduces to
\begin{equation}
  \Phi^0_1([z^m\theta_i,z^n]) = \Phi^1_1(z^m\theta_i)\Phi^0_1(z^n) = \Phi^1_1(z^m\theta_i)(z_L^n)
\end{equation}
Since $[z^m\theta_i,z^n] = n_iz^{m+n}$, we find that
\begin{equation}
  nz_L^{m+n} = \Phi^1_1(\xi_m)(z_L^n)
\end{equation}
This shows that $SC^1((\C^*)^r)$ acts through $\Phi^1_1$ on the ring $CW^0(L,L)$ just as it acts by bracket on $SC^0((\C^*)^r)$, with the isomorphism $\Phi^0_1$ intertwining them, and the latter action is known to define a map of Lie algebras.
\end{proof}

\begin{remark}
  The preceding proof relies on having a known form for the closed-open map $\Phi^0_1$, but there is another argument that is more in line with the abstract theory, that relies on assumptions \ref{assump:hw} and \ref{assump:comm}. Let $\mathcal{L}$ denote the full subcategory of the wrapped Fukaya category having $L$ as its only object. The closed-open map defines a Lie map (actually part of an $L_\infty$ map)
  \begin{equation}
    \Phi_1: SH^1((\C^*)^r) \to HH^1(\mathcal{L})
  \end{equation}
  The combination of assumptions \ref{assump:hw} and \ref{assump:comm} imply that $HH^1(\mathcal{L})$ is the space of derivations of the algebra $CW^0(L,L)$, so in particular, this map actually lands in endomorphisms of $CW^0(L,L)$, and it is known to be a Lie map.
\end{remark}

We can equip the Lagrangian $L$ with various cochains $c_L$ making it $SC^1((\C^*)^r)$-invariant. We pick some reference cochain $c_0$ (which might as well be zero). The map $\Phi^1_1$ is a representation of $SC^1((\C^*)^r)$ on $CW^0(L,L)$, and if we twist it as in Corollary \ref{linear}, using the $c_0$ for both copies of $L$, we obtain
\begin{equation}
  \rho_{c_0,c_0}(\xi)(f) = \Phi^1_1(\xi)(f) - \mu^2(c_0(\xi),f) + \mu^2(f,c_0(\xi))
\end{equation}
where $\xi \in SC^1((\C^*)^r)$ and $f \in CW^0(L,L)$. We have chosen to emphasize the dependence on the cochains $c_0,c_0$. Now, by the commutativity assumption \ref{assump:comm}, the $\mu^2$ terms cancel out, and thus 
\begin{equation}
  \rho_{c_0,c_0}(\xi)(f) = \Phi^1_1(\xi)(f)
\end{equation}
 and the action on $CW^0(L,L)$ is not sensitive to the choice of $c_0$.

To change the action in a nontrivial way, we can modify the choice of $c_L$ for one copy of $L$ but not the other. Take a map $\gamma: SC^1((\C^*)^r) \to CW^0(L,L)$ , and use $c_L = c_0+\gamma$ for the first copy of $L$, and retain $c_0$ for the second copy. Then we find
\begin{equation}
  \rho_{c_0+\gamma,c_0}(\xi)(f) = \rho_{c_0,c_0}(\xi)(f) + \mu^2(f,\gamma(\xi)) = \Phi^1_1(\xi)(f) + \mu^2(f,\gamma(\xi))
\end{equation}
A similar shift happens if we modify the $c_L$ for the second copy of $L$. This is summarized as the following proposition.
\begin{proposition}
  \label{prop:cstar-actions}
  Let $c_1$ and $c_2$ be two maps $SC^1((\C^*)^r) \to CW^0(L,L)$. Using $c_1$ for the first copy of $L$, and $c_2$ for the second copy of $L$, the linear map $\rho_{c_1,c_2}$ of $SC^1((\C^*)^r)$ on $CW^0(L,L)$ is given by
  \begin{equation}
    \rho_{c_1,c_2}(\xi)(f) = \Phi^1_1(\xi)(f) + \mu^2(c_1(\xi)-c_2(\xi), f)
  \end{equation}
  The map $\rho_{c_1,c_2}: SC^1((\C^*)^r) \to End(CW^0(L,L))$ is a Lie map if and only if $c_1-c_2$ is a cocycle in the Chevalley--Eilenberg complex of $SC^1((\C^*)^r)$ with values in the module $CW^0(L,L)$, where the latter space is regarded as a module via $\Phi^1_1$.
\end{proposition}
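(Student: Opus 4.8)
The plan is to treat the two assertions separately, both by reduction to material already established.

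\emph{The formula.} I would read off the displayed formula directly from Corollary \ref{linear}, applied with $(K,L)$ taken to be two copies of our Lagrangian carrying the cochains $c_1$ and $c_2$. That corollary expresses $\rho_{c_1,c_2}(\xi)(f)$ as $\Phi^1_1(\xi)(f)$ corrected by two $\mu^2$-terms, one attached to each copy of $L$; under the commutativity assumption \ref{assump:comm} these two terms combine into the single term $\mu^2(c_1(\xi)-c_2(\xi),f)$. (By assumption \ref{assump:hw} the complex $CW^*(L,L)$ carries no differential, so chain- and cohomology-level maps coincide here, and by Proposition \ref{prop:cstar-invariance} any choice of $c_i$ is an admissible invariance cochain.)

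\emph{The Lie-map criterion.} Write $\psi = c_1 - c_2 : SC^1((\C^*)^r) \to CW^0(L,L)$, so that by the formula just proved $\rho_{c_1,c_2}(\xi) = \Phi^1_1(\xi) + L(\psi(\xi))$, where $L(a)$ denotes the operator of $\mu^2$-multiplication by $a$ on $CW^0(L,L)$. I would assemble three inputs. First, $\Phi^1_1$ is itself a Lie algebra homomorphism $SC^1((\C^*)^r) \to \End(CW^0(L,L))$, which is precisely Proposition \ref{prop:cstar-liemap}; it endows $CW^0(L,L)$ with the module structure $\xi.m := \Phi^1_1(\xi)(m)$. Second, each $\Phi^1_1(\xi)$ is a derivation of the product $\mu^2$ on $CW^0(L,L)$: this comes from Proposition \ref{multip} applied with $L_0 = L_1 = L_2 = L$, $a = \xi$, and $x_1,x_2 \in CW^0(L,L)$, since under assumptions \ref{assump:sh}--\ref{assump:coiso} every term there involving $d$, $\mu^1$, or $\Phi^0_1(\xi)$ drops out ($\Phi^0_1(\xi) \in CW^1(L,L) = 0$, which also kills the $\mu^3$ terms), leaving exactly the Leibniz identity after substituting $|a|=1$ and $|x_i|=0$. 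Third, multiplication operators commute, $[L(a),L(b)] = 0$, again by \ref{assump:comm}, and $L$ is injective because $CW^0(L,L)$ is unital, so that $L(a)$ returns $a$ when applied to the unit.

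Granting these, the computation is short: in $\rho_{c_1,c_2}([\xi,\eta]) - [\rho_{c_1,c_2}(\xi),\rho_{c_1,c_2}(\eta)]$, the purely $\Phi^1_1$ contribution vanishes by the first input, the term $[L(\psi(\xi)),L(\psi(\eta))]$ vanishes by the third, and each mixed commutator simplifies by the derivation property to $[\Phi^1_1(\xi),L(\psi(\eta))] = L(\xi.\psi(\eta))$ (and symmetrically). What survives is $L\bigl(\psi([\xi,\eta]) - \xi.\psi(\eta) + \eta.\psi(\xi)\bigr)$, which, by injectivity of $L$, vanishes for all $\xi,\eta$ if and only if $\psi$ satisfies $\psi([\xi,\eta]) = \xi.\psi(\eta) - \eta.\psi(\xi)$ — the Chevalley--Eilenberg one-cocycle condition for $\psi = c_1 - c_2$ with values in the module $CW^0(L,L)$, exactly as in the abstract discussion opening Section \ref{sec:witt-sh}. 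I do not anticipate a genuine obstacle; the one point that wants care is checking that the stated components of Corollary \ref{linear} and Proposition \ref{multip} truly collapse as claimed once the degree and vanishing hypotheses are substituted, signs included.
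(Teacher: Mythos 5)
Your proposal is correct and follows essentially the same route as the paper: the formula is exactly the specialization of Corollary \ref{linear} combined with the commutativity assumption (this is what the paper's preceding discussion does), and the Lie-map criterion is the standard cocycle-shift argument that the paper dismisses as ``standard,'' referring back to the abstract discussion in Section \ref{sec:witt-sh}. Your only variation is to extract the derivation property of $\Phi^1_1(\xi)$ directly from Proposition \ref{multip} (using assumptions \ref{assump:sh}--\ref{assump:coiso} to kill the $d$, $\mu^1$, $\Phi^0_1$ and $\mu^3$ terms) rather than transporting the Poisson property of the BV bracket through the ring isomorphism $\Phi^0_1$; both are valid, and your verification, including injectivity of left multiplication via the unit, is a faithful filling-in of the details the paper leaves implicit.
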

\begin{proof}
  The expression for $\rho_{c_1,c_2}$ follows from the discussion preceding the proposition and the commutativity assumption \ref{assump:comm}. The second assertion is standard (and we already used it in the discussion of cocycles on $SH^1((\C^*)^r)$).
\end{proof}

\begin{remark}
  The fact that the difference $c_1-c_2$ should be a Lie algebra cocycle is already evident in the definition of an equivariant Lagrangian. Under assumptions \ref{assump:sh} and \ref{assump:hw} and \ref{assump:comm}, the equation that $c_L$ ought to solve is
  \begin{equation}
    c_L([x,y]) - \Phi^1_1(x)(c_L(y)) + \Phi^1_1(y)(c_L(x)) + \Phi^0_2(x,y) = 0
  \end{equation}
  and the first three terms on the left hand side are the Chevalley--Eilenberg differential of $c_L$. Therefore, the difference of two solutions of this equation is a Chevalley--Eilenberg cocycle.
\end{remark}

We can obtain cocycles $\gamma_L: SC^1((\C^*)^r) \to CW^0(L,L)$ using the results of Section \ref{sec:witt-sh}. Let $\gamma \in Z^1(SC^1((\C^*)^r),SC^0((\C^*)^r))$ be a Lie algebra cocycle, for example we could take the cocycles considered before
\begin{equation}
  \gamma = \alpha \Delta + \beta_1 z_1^{-1}[-,z_1] + \cdots + \beta_r z_r^{-1}[-,z_r]
\end{equation}
and set
\begin{equation}
  \gamma_L = \Phi^0_1\circ \gamma : SC^1((\C^*)^r) \to CW^0(L,L)
\end{equation}
Now because $\Phi^0_1$ is an isomorphism (assumption \ref{assump:coiso}) that intertwines the actions of $SC^1((\C^*)^r)$ on $SC^0((\C^*)^r)$ and $CW^0(L,L)$ (Proposition \ref{prop:cstar-liemap}), it induces an isomorphism between the spaces of Lie algebra cocycles with values in these two modules. Thus $\gamma_L$ is indeed a cocycle. 

For example, in the case of $\C^*$, we have the following
\begin{proposition}
  Choose $c_1$ and $c_2$ such that 
  \begin{equation}
    c_1-c_2 = \gamma_L = \Phi^0_1\circ( \alpha \Delta + \beta z^{-1}[-,z])
  \end{equation}
  Then the map $\rho_{c_1,c_2}$ is a Lie algebra representation of $SC^1(\C^*)$ on $CW^0(L,L)$. This representation is isomorphic to the representation $V_{\alpha,\beta}$ of scalar densities of the form $P(z) z^\beta (dz/z)^\alpha$
\end{proposition}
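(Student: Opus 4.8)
The plan is to reduce the statement to the explicit computation of $\rho_{c_1,c_2}$ afforded by Proposition \ref{prop:cstar-actions} and then to recognize the result as the representation $\rho_{\alpha,\beta}$ studied in Section \ref{sec:restriction}. First I would invoke Proposition \ref{prop:cstar-actions}: since only the difference $c_1-c_2 = \gamma_L$ enters, and $\gamma_L = \Phi^0_1\circ\gamma$ with $\gamma = \alpha\Delta + \beta z^{-1}[-,z]$, we have $\rho_{c_1,c_2}(\xi)(f) = \Phi^1_1(\xi)(f) + \mu^2(\gamma_L(\xi),f)$ for $\xi \in SC^1(\C^*)$ and $f \in CW^0(L,L)$. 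That $\gamma$ is a Lie algebra cocycle with values in $SC^0(\C^*)$ follows from Propositions \ref{prop:bv-cocycle} and \ref{prop:log-cocycle} together with the linearity of the space of cocycles; and because $\Phi^0_1$ intertwines the $SC^1(\C^*)$-module structures (Proposition \ref{prop:cstar-liemap}) it carries cocycles to cocycles, so $\gamma_L \in Z^1(SC^1(\C^*),CW^0(L,L))$. The second assertion of Proposition \ref{prop:cstar-actions} then immediately gives that $\rho_{c_1,c_2}$ is a Lie algebra representation.

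Next I would transport everything to $\K[z,z^{-1}]$ along the ring isomorphism $\Phi^0_1$ (assumptions \ref{assump:comm} and \ref{assump:coiso}), under which $\mu^2$ becomes ordinary Laurent multiplication and $z_L^n \leftrightarrow z^n$. Using $\Delta(\xi_m) = m z^m$ and $z^{-1}[\xi_m,z] = z^m$ recalled in Section \ref{sec:restriction}, one gets $\gamma_L(\xi_m) = \Phi^0_1\bigl((\alpha m + \beta)z^m\bigr) = (\alpha m + \beta) z_L^m$; combined with the identity $\Phi^1_1(\xi_m)(z_L^n) = n\, z_L^{m+n}$ from Proposition \ref{prop:cstar-liemap}, this yields $\rho_{c_1,c_2}(\xi_m)(z_L^n) = (n + \alpha m + \beta) z_L^{m+n}$. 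This is precisely the formula defining $\rho_{\alpha,\beta}$ on $\K[z,z^{-1}]$ in Section \ref{sec:restriction}, so $\Phi^0_1$ is an isomorphism of $SC^1(\C^*)$-representations from $(CW^0(L,L),\rho_{c_1,c_2})$ to $(\K[z,z^{-1}],\rho_{\alpha,\beta})$.

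Finally, I would appeal to the intertwiner $\K[z,z^{-1}] \to V_{\alpha,\beta}$, $z^j \mapsto z^j z^\beta (dz/z)^\alpha$, recorded in Section \ref{sec:restriction}, which is a linear isomorphism carrying $\rho_{\alpha,\beta}$ to the (formal) Lie derivative action of $\Vect(\Gm) \cong SH^1(\C^*)$ on densities of weight $(\alpha,\beta)$; composing with $\Phi^0_1$ identifies $\rho_{c_1,c_2}$ with $V_{\alpha,\beta}$. I expect no essential obstacle here: the content is simply the assembly of Proposition \ref{prop:cstar-actions} with the cocycle Propositions \ref{prop:bv-cocycle} and \ref{prop:log-cocycle} and the explicit Section \ref{sec:restriction} computation. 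The one point deserving care is that the twisting term $\mu^2(\gamma_L(\xi),f)$ reproduces exactly the weight shift $j \mapsto j + \alpha i + \beta$, which hinges on the precise normalizations $\Delta(\xi_n) = n z^n$ and $z^{-1}[\xi_n,z] = z^n$ and on $\Phi^0_1$ being a ring isomorphism compatible with the $H_1(\C^*;\f{Z})$-grading (assumptions \ref{assump:comm} and \ref{assump:coiso}); and, as in Section \ref{sec:restriction}, for general $\alpha,\beta \in \K$ the space $V_{\alpha,\beta}$ and its Lie derivative action are to be interpreted formally.
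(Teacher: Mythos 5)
Your proposal is correct and follows essentially the same route as the paper, whose own proof simply points to the discussion preceding the proposition (Proposition \ref{prop:cstar-actions} together with the fact that $\gamma_L=\Phi^0_1\circ\gamma$ is a cocycle because $\Phi^0_1$ intertwines the module structures) and to the Section \ref{sec:witt-sh} computation of $\rho_{\alpha,\beta}$. You have merely spelled out the explicit verification $\rho_{c_1,c_2}(\xi_m)(z_L^n)=(n+\alpha m+\beta)z_L^{m+n}$ that the paper leaves implicit.
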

\begin{proof}
  The discussion preceding the proposition makes the first assertion clear. The second is a consequence of the discussion in Section \ref{sec:witt-sh} and the fact that $\Phi^0_1$ is an isomorphism by assumption 4. 
  \end{proof}

We can restrict our whole discussion to the subalgebra $\fraksl_2$ inside
$SC^1(\C^*)$, and we find once again, that when $\alpha$ is a non-positive
half-integer, and $\alpha+\beta$ is integral, the representation of $\fraksl_2$
on $CW^0(L,L)$ contains a unique finite-dimensional submodule, which is
isomorphic to the $(-2\alpha+1)$-dimensional irreducible representation of
$\fraksl_2$.

\label{sec:twists-of-L}

We can obtain all representations of $\fraksl_2$ by placing different equivariant structures on the single Lagrangian $L$, but one could object that this process is somewhat artificial: the representations actually arise inside a representation of the Witt algebra in densities, obtained by choosing a rather artificial cochain $c_L$, and taking the restriction to $\fraksl_2$. In a sense, once we find the representation of the Witt algebra in Laurent polynomials, the rest is an algebraic formality. In this section we will see a way in which the finite-dimensional representations of $\fraksl_2$ are more naturally implied by the geometry of $\C^*$ and its Lagrangian submanifolds. 

\begin{figure}[!h] \centering
\includegraphics[scale=0.6]{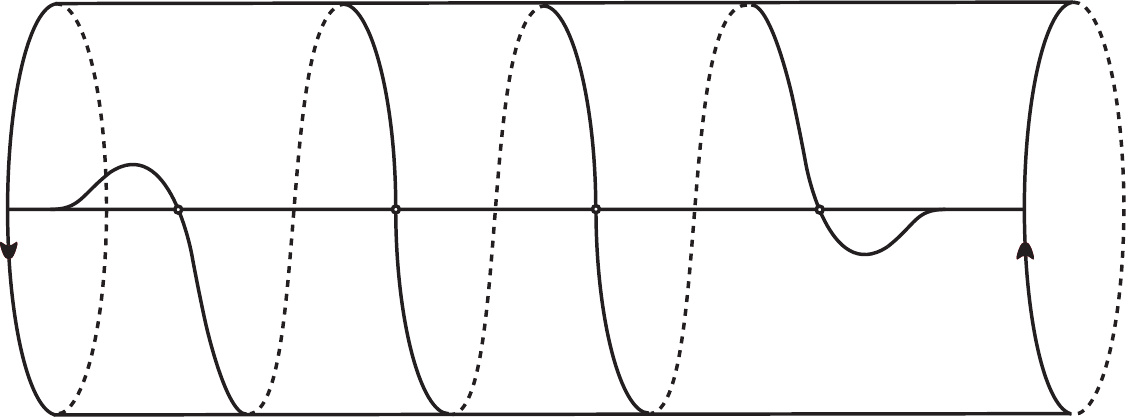}  \caption{The Lagrangian $L(3)$; the vector space generated by the 4 intersection points gives the 4-dimensional irreducible representation of $\mathfrak{sl}_2$ . }
\label{deform} \end{figure}

We begin by summarizing the construction. Start with the Lagrangian $L$ (the real positive locus) considered above. Then, for each $n \in \Z$, we construct another Lagrangian $L(n)$ which is the image of $L$ under the $n$-th power of the Dehn twist about the core circle $S^1 \subset \C^*$ (the zero section in $T^*S^1$). Thus $L = L(0)$. Now, in contrast to the previous construction, we equip all Lagrangians with cochains $c_{L(n)} \equiv 0$. It turns out that, with this choice, and for $n \geq 0$, $HW^0(L,L(n))$ becomes a representation of $SH^1(\C^*)$. Next, we look more closely at the generators of $HW^0(L,L(n))$, which are either intersection points of $L$ and $L(n)$, or chords from $L$ to $L(n)$. Let $V(n) \subset HW^0(L,L(n))$ be the subspace spanned by the intersection points (which is to say, not including the chords of positive length). We show, using geometric arguments, that $V(n)$ is stable under $\fraksl_2$, and furthermore that its weight spaces (under the action of the Cartan subalgebra $H^1(\C^*) \subset \fraksl_2 \subset SH^1(\C^*)$ ) are one-dimensional. This determines $V(n)$ as the irreducible representation of highest weight $n$. The details follow.

To construct $L(n)$, we use the Dehn twist about the core circle $S^1 = \{|z| = 1\} \subset \C^*$. This is a compactly supported symplectic automorphism of $\C^*$, which is unique up to isotopy, and we choose a representative $\tau$ that is supported in a particular annulus $A \subset \C^*$. We regard the choice of $A$ as decomposing $\C^*$ into several parts: an \emph{interior} given by $A$ itself, and two \emph{ends}, the components of $\C^* \setminus A$.  We define $L(n) = \tau^n L$, and we call it the $n$-th twist of $L$. Note well that, on the ends of $\C^*$, $L$ and $L(n)$ coincide, as we will make use of this later. With respect to the decomposition, we choose a Hamiltonian to define the wrapped complexes $CW^*(L,L(n))$ that is small (essentially zero) on the interior, and grows on the ends. The wrapped complex $CW^*(L,L(n))$ has generators given by chords of the Hamiltonian flow starting on $L$ and ending on $L(n)$. Each chord has an action, which is roughly a measure of its length. Since we assume that the wrapping Hamiltonian is small in the interior, each intersection point between $L$ and $L(n)$ gives rise to a ``short'' chord with small action. By an abuse of language we call these generators in $CW^*(L,L(n))$ \emph{intersection generators}. Also, on the end where $L$ and $L(n)$ coincide, we obtain chords of various actions, the smallest of which corresponds to an intersection between $L$ and a small perturbation of $L(n)$ on that end. We declare these generators to be \emph{intersection generators} as well. The other chords, which ``go all the way around'' $\C^*$, we call \emph{proper chord generators}.  With an appropriate choice of $\tau$ and the wrapping Hamiltonian $H$, we can ensure the following assumption in addition to assumptions \ref{assump:sh}--\ref{assump:coiso}:
\begin{enumerate}
\setcounter{enumi}{4}
\item \label{assump:hw-l-ln}If $n > 0$, then all generators of $CW^*(L,L(n))$ lie in degree zero, hence the differential vanishes.
\end{enumerate}

  In the case where $n < 0$, this assumption will not be possible to satisfy, and in the case $n = 0$, it would conflict with assumption \ref{assump:hw}. When computing $CW^*(L,L)$ we want to use a Hamiltonian that creates only one intersection generator, namely the element $1_L \in CW^0(L,L)$, where as the above prescription would create two degree zero intersection generators, one in each end, and hence an odd generator in the interior to compensate, giving the same cohomology.

We also need to use the wrapped complex $CW^*(L(n),L(n))$, and for this we use the same perturbation scheme as for $L$, in the sense that we take the image of this scheme under the automorphism $\tau^n$. With this choice, we obtain
\begin{enumerate}
\setcounter{enumi}{5}
\item \label{assump:hw-ln} 
$CW^*(L(n),L(n))$ is concentrated in degree zero, and 
  \begin{equation}
    (\tau^n)_* : CW^0(L,L) \to CW^0(L(n),L(n))
  \end{equation}
  is a ring isomorphism.
\end{enumerate}

With this setup in place, we begin our derivation. Fix some $n > 0$.
\begin{proposition}
  Both $L$ and $L(n)$ are $SC^1(\C^*)$--invariant, with cochains $c_L = 0$ and $c_{L(n)} = 0$.
\end{proposition}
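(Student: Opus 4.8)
The plan is to check the defining condition of $\g$-invariance directly for each of $L$ and $L(n)$; the point is that it becomes automatic once the relevant wrapped complexes are known to vanish in degree one. Recall that, with respect to the chosen additive basis $\xi_n = z^n\theta$ ($n \in \f{Z}$) of $SC^1(\f{C}^*)$, saying that $(L,c_L)$ is $SC^1(\f{C}^*)$--invariant means precisely that $\Phi^0_1(\xi_n) = \mu^1(c_L(\xi_n))$ in $CW^1(L,L)$ for all $n$, and similarly with $L(n)$ in place of $L$.

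First I would dispose of $L$, which is exactly Proposition \ref{prop:cstar-invariance} specialized to $r=1$: the closed--open component $\Phi^0_1$ is degree-preserving, so it maps $SC^1(\f{C}^*)$ into $CW^1(L,L)$, and this group vanishes by assumption \ref{assump:hw}. Hence $\Phi^0_1(\xi_n) = 0 = \mu^1(0)$ for every $n$, so $L$ is $SC^1(\f{C}^*)$--invariant and one may take $c_L = 0$ (in fact $c_L$ is forced to be $0$ up to the equivalence relation on cochains, since $HW^0(L,L)$-valued cochains act trivially on a complex with vanishing differential).

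Then I would run the identical argument for $L(n)$ with $n>0$. Here $\Phi^0_1$ is computed using the perturbation data attached to the pair $(L(n),L(n))$, which by construction is the $\tau^n$-pushforward of the data for $(L,L)$; but whatever that map is, it lands in $CW^1(L(n),L(n))$, and this group is zero because $CW^*(L(n),L(n))$ is concentrated in degree zero by assumption \ref{assump:hw-ln}. So again $\Phi^0_1(\xi_n) = 0 = \mu^1(0)$ for all $n$, and $(L(n),0)$ is $SC^1(\f{C}^*)$--invariant.

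Consequently the proposition carries no obstacle of its own; the substantive input is the hypothesis \ref{assump:hw-ln} itself, namely that $\tau$ and the wrapping Hamiltonian can be chosen so that $CW^*(L(n),L(n))$ sits in degree zero. Verifying that is the analogue for $L(n)$ of Abouzaid's cotangent-fiber computation underlying assumptions \ref{assump:hw}--\ref{assump:coiso} for $L$, and it uses that $\tau$ is compactly supported in the annulus $A$, that $L$ and $L(n)$ agree on the two ends of $\f{C}^*$, and convexity of the Hamiltonian there; I would regard checking its compatibility with the fixed global perturbation scheme $\mathcal{P}$ as the only mildly delicate point in the surrounding setup, but it plays no role in the proof of the proposition once granted.
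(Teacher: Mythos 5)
Your argument is correct and is essentially the paper's own (the paper disposes of the proposition in one line, as an immediate consequence of the vanishing of the degree-one parts of the relevant wrapped complexes); your citation of the assumption that $CW^*(L(n),L(n))$ is concentrated in degree zero is in fact the apt one for the invariance of $L(n)$, since invariance of a single Lagrangian only sees $CW^1(L(n),L(n))$. One aside in your write-up is wrong, however: $c_L$ is \emph{not} forced to be $0$ up to equivalence. Since $\mu^1=0$ on $CW^*(L,L)$, \emph{every} cochain $c_L$ satisfies $\mu^1\circ c_L=\Phi^0_1$ (this is exactly Proposition \ref{prop:cstar-invariance}), there are no nonzero coboundaries, and so distinct choices of $c_L$ are pairwise inequivalent, the equivalence classes forming an affine space over $Hom_{\f{K}}(\g,HW^0(L,L))$ with $HW^0(L,L)$ a Laurent polynomial ring; the existence of these genuinely different equivariant structures on $L$ is precisely what Section \ref{sec:single-lag} exploits to produce the representations $V_{\alpha,\beta}$.
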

\begin{proof}
  Clear by assumptions \ref{assump:hw} and \ref{assump:hw-l-ln}.
\end{proof}

The choice $c_L=0$ makes $L$ equivariant in the sense of Definition \ref{equivariant} if only if the terms $\Phi^0_2(g_\alpha,g_\beta)$ vanishes. This obstruction is potentially different for every object, so we include the object as a subscript. There is a relationship between $\Phi^0_{2,L}$ and $\Phi^0_{2,L(n)}$.
\begin{lemma}
  \label{lem:same-phi}
  The isomorphism $(\tau^n)_* : CW^0(L,L) \to CW^0(L(n),L(n))$ intertwines the maps $\Phi^0_{2,L}$ and $\Phi^0_{2,L(n)}$:
  \begin{equation}
    (\tau^n)_* \circ \Phi^0_{2,L} = \Phi^0_{2,L(n)} : SC^1(\C^*)^{\otimes 2} \to CW^0(L(n),L(n))
  \end{equation}
\end{lemma}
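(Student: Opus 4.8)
The plan is to deduce the lemma from naturality of the closed--open operations under Liouville automorphisms of $\C^*$, applied to $\phi = \tau^n$, together with the fact that $\tau^n$ acts trivially on $SH^*(\C^*)$.

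First I would record the relevant features of the model Dehn twist. Written in the coordinates $\C^* \cong \R_{r}\times S^1_{\theta}$ adapted to the Liouville form at infinity, one may take $\tau$ to have the form $(r,\theta)\mapsto(r,\theta+f(r))$ with $f$ locally constant near each end (supported in the annulus $A$, and equal to $0$, resp.\ $2\pi$, on the two components of $\C^*\setminus A$). Thus $\tau$, and hence $\tau^n$, is a strict Liouville automorphism of $\C^*$, equal to the identity on each end of $\C^*$, and the path $s\mapsto\bigl((r,\theta)\mapsto(r,\theta+sf(r))\bigr)$, $s\in[0,1]$, consists of such automorphisms. Consequently $(\tau^n)_*$ acts as the identity on $SH^*(\C^*)$; under assumption \ref{assump:sh}, where $SC^*(\C^*)$ already equals its cohomology, it is literally the identity on $SC^1(\C^*)$.

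Next I would invoke the naturality of the Floer package. For any Liouville automorphism $\phi$ and any brane $K$, the map $u\mapsto\phi\circ u$ sets up a bijection between the parametrized moduli spaces of punctured disks defining $\Phi^0_{2,K}$ with perturbation data $\mathcal{D}$ and those defining $\Phi^0_{2,\phi(K)}$ with the pushed-forward data $\phi_*\mathcal{D}$: it sends $J$-holomorphic curves to $\phi_*J$-holomorphic curves, Hamiltonian orbits and chords to their $\phi$-images, the boundary condition on $K$ to that on $\phi(K)$, and identifies orientation lines, so no new signs appear. Hence the induced isomorphism $\phi_*\colon CW^0(K,K)\to CW^0(\phi K,\phi K)$ satisfies $\phi_*\bigl(\Phi^0_{2,K}(a,b)\bigr)=\Phi^0_{2,\phi(K)}(\phi_* a,\phi_* b)$ for all $a,b\in SC^1(\C^*)$, the right-hand side being computed with $\phi_*\mathcal{D}$. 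Applying this with $\phi=\tau^n$ and $K=L$: the perturbation scheme for $CW^*(L(n),L(n))$ was by construction chosen to be the $\tau^n$-image of the one for $CW^*(L,L)$, and for the closed-string data entering $\Phi^0_2$ we may likewise use the $\tau^n$-pushforward of the data used for $L$; since that choice and any globally fixed closed data are joined by a path of admissible data via the isotopy of the previous step, the comparison map between them is a continuation map acting as the identity on $SH^*(\C^*)$, hence --- by assumption \ref{assump:sh} --- literally the identity on $SC^*(\C^*)$, so it does not affect $\Phi^0_2$. Combining this with the triviality of $(\tau^n)_*$ on $SC^1(\C^*)$ gives
\[ (\tau^n)_*\,\Phi^0_{2,L}(a,b)\;=\;\Phi^0_{2,L(n)}\bigl((\tau^n)_* a,(\tau^n)_* b\bigr)\;=\;\Phi^0_{2,L(n)}(a,b), \]
which is the assertion; note that all domains and targets here have vanishing differential by assumptions \ref{assump:sh}, \ref{assump:hw}, \ref{assump:hw-ln}, so this is genuinely an equality of maps rather than a statement up to homotopy.

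\textbf{Main obstacle.} The point that needs care is the perturbation-data bookkeeping: one must confirm that the $\tau^n$-pushforward of a consistent system of perturbation data for the $L$-computation --- the closed data for $SC^*(\C^*)$ and its continuation maps, the open data for $CW^*(L,L)$, and the auxiliary data attached to the closed--open domains themselves, including the real blow-ups of the Deligne--Mumford spaces from \cite{KSV} --- is again a consistent system, and that any mismatch with the globally fixed data is absorbed into continuation maps which are trivial because $\tau^n$ is isotopic to the identity. Once this is in place, the lemma is just the transport of a moduli problem under a symplectomorphism, and I expect this bookkeeping, rather than any geometric input, to be the only real difficulty.
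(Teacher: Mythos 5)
Your proposal is correct and follows essentially the same route as the paper: transport the moduli spaces defining $\Phi^0_{2,L}$ by $\tau^n$, using that $\tau^n$ is compactly supported (hence literally the identity on the ends where the $SC^1$ generators live, so $(\tau^n)_*$ is the identity on $SC^1(\C^*)$) and that the perturbation scheme for $L(n)$ is by construction the $\tau^n$-image of the one for $L$. Your additional bookkeeping about pushed-forward versus globally fixed closed-string data is just a more explicit version of what the paper leaves implicit in its choice of perturbation scheme.
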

\begin{proof}
  Here we use the fact that the symplectic automorphism $\tau$ acts on $SC^1(\C^*)$ by the identity map, for the simple reason that it is compactly supported, so doesn't affect the generators in the ends, and it acts by identity on the cohomology of the interior. Thus, if we apply the map $\tau^n$ to the moduli space of curves computing the coefficient of $x \in CW^0(L,L)$ in $\Phi^0_{2,L}(a,b)$, we obtain a moduli space that computes the coefficient of $(\tau^n)_*(x) \in CW^0(L(n),L(n))$ in $\Phi^0_{2,L(n)}(a,b)$.
\end{proof}

\begin{proposition}
  The pair $(L,L(n))$ is equivariant as a pair (Definition \ref{def:equivariant-pair}). Thus the map $(\Phi^1_1)_{L,L(n)} : SC^1(\C^*) \to \End(CW^0(L,L(n)))$ is a map of Lie algebras.
\end{proposition}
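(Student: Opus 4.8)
The assertion about $(\Phi^1_1)_{L,L(n)}$ is immediate once the pair $(L,L(n))$ is known to be equivariant in the sense of Definition \ref{def:equivariant-pair}: with $c_L = c_{L(n)} = 0$ the correction terms in Corollary \ref{linear} vanish, so $\rho_\mathcal{P}(g_\alpha) = (\Phi^1_1)_{L,L(n)}(g_\alpha)$, and formula \eqref{halfway} (cf.\ the discussion following Definition \ref{def:equivariant-pair}) shows $\rho_\mathcal{P}$ is a Lie algebra homomorphism. So the real task is to verify the three conditions of Definition \ref{def:equivariant-pair}. Since $CW^1(L,L) = CW^1(L(n),L(n)) = 0$ (assumptions \ref{assump:hw}, \ref{assump:hw-ln}), the map $\Phi^0_1$ vanishes identically on $SC^1(\C^*)$ for both objects, so the definition applies; and with $c_L = c_{L(n)} = 0$ its first two displayed cocycles are identically zero. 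Only the third remains: one must show that, for all $g_\alpha, g_\beta$ and all $x \in CW^0(L,L(n))$,
\[ \mu^2\bigl(\Phi^0_{2,L(n)}(g_\alpha,g_\beta),\, x\bigr) - \mu^2\bigl(x,\, \Phi^0_{2,L}(g_\alpha,g_\beta)\bigr) \]
is a coboundary --- equivalently, by assumption \ref{assump:hw-l-ln}, that it vanishes in $HW^0(L,L(n))$.

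The plan is to rewrite this using Lemma \ref{lem:same-phi}, which gives $\Phi^0_{2,L(n)}(g_\alpha,g_\beta) = (\tau^n)_*\bigl(\Phi^0_{2,L}(g_\alpha,g_\beta)\bigr)$. Setting $p := \Phi^0_{2,L}(g_\alpha,g_\beta) \in HW^0(L,L)$, the identity to be proved is that $(\tau^n)_*(p)$ acts on the left of the bimodule $HW^0(L,L(n))$ exactly as $p$ acts on the right. Since $L \cong L(n)$ in $\mathcal{W}(\C^*)$ (the very point of this section --- they become isomorphic once the superpotential is forgotten), $HW^0(L,L(n))$ is free of rank one over $HW^0(L,L)$; fixing an invertible $u \in HW^0(L,L(n))$ with inverse $v \in HW^0(L(n),L)$, associativity of $\mu^2$ and the commutativity of $HW^0(L,L)$ (assumption \ref{assump:comm}) show that the left action of any $\tilde q \in HW^0(L(n),L(n))$ equals the right action of $\phi(\tilde q) := \mu^2\bigl(v,\mu^2(\tilde q,u)\bigr) \in HW^0(L,L)$, and that $\phi$ is a (grading-preserving, unital) ring isomorphism. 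Thus it suffices to prove $\phi \circ (\tau^n)_* = \mathrm{id}_{HW^0(L,L)}$.

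This last point is where I expect the main work to lie. Both $(\tau^n)_* \colon HW^0(L,L) \to HW^0(L(n),L(n))$ and $\phi^{-1}$ are unital ring isomorphisms between copies of $\K[z^{\pm 1}]$, and both are homogeneous for the grading of these spaces by $H_1(\C^*;\Z) \cong \Z$: the automorphism $\tau$ acts trivially on $H_1(\C^*)$ (in the spirit of the proof of Lemma \ref{lem:same-phi}), and $\mu^2$ respects the grading (Section \ref{sec:observations}). A unital homogeneous ring automorphism of $\K[z^{\pm 1}]$ is determined by a nonzero scalar, so $\phi\circ(\tau^n)_*$ is the rescaling $z_L \mapsto \lambda z_L$ for some $\lambda$, and one only has to check $\lambda = 1$. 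For this I would choose $u$, $v$, and the weight-one generators of $HW^0(L,L)$, $HW^0(L(n),L(n))$ and $HW^0(L,L(n))$ all to be chords supported on an end of $\C^*$ on which $L$ and $L(n)$ coincide and on which $\tau^n$ restricts to the identity; then $(\tau^n)_*$ carries the chosen weight-one $L$-chord to the chosen weight-one $L(n)$-chord on the nose, while $\phi$ applied to the latter is just the same chord composed with the ``unit'' chords $u$ and $v$, hence the weight-one $L$-chord again. This gives $\lambda = 1$. It is the counterpart, for the mixed module structures on $HW^0(L,L(n))$, of Lemma \ref{lem:same-phi}: the compact support of $\tau$ confines all the twisting to the interior annulus, so every generator controlling the ring- and module-isomorphisms can be taken in the ends, where nothing is twisted. (Alternatively, if one knows that $\tau^n$ is isomorphic to the identity functor on $\mathcal{W}(\C^*)$, one may take $u$ to be a component of the natural isomorphism, so that $(\tau^n)_*$ is literally conjugation by $u$ and $\phi\circ(\tau^n)_* = \mathrm{id}$ becomes a formal consequence of associativity and the unit axiom.) With $\phi\circ(\tau^n)_* = \mathrm{id}$ in hand, $\phi\bigl((\tau^n)_*(p)\bigr) = p$, the third condition of Definition \ref{def:equivariant-pair} holds, so $(L,L(n))$ is equivariant as a pair, and the proposition follows.
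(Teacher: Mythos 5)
Your proposal is correct and takes essentially the same route as the paper: reduce to showing the third cocycle term of Definition \ref{def:equivariant-pair} vanishes, invoke Lemma \ref{lem:same-phi}, and use an invertible element of $HW^0(L,L(n))$ to convert the left action of $(\tau^n)_*(p)$ into a right action, checking that the resulting transfer isomorphism of graded Laurent rings agrees with $(\tau^n)_*$ on the winding-one generator (the paper's direct computation that $\mu^2(x_0,z_L)=\mu^2(z_{L(n)},x_0)$, which your end-localization argument reproduces). The only cosmetic difference is that you phrase the comparison as conjugation by $u$ and $v=u^{-1}$ rather than composing $\mu^2(x_0,-)$ with the inverse of $\mu^2(-,x_0)$.
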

\begin{proof}
  Taking $c_L=0$ and $c_{L(n)}=0$ in Definition \ref{def:equivariant-pair}, we find that all that is required is to show that 
  \begin{equation}
    \label{eq:problem-terms}
    \mu^2(\Phi^0_{2,L(n)}(a,b),x) - \mu^2(x,\Phi^0_{2,L}(a,b))
  \end{equation}
  is a coboundary; since the relevant complex has only a single degree this means to show that \eqref{eq:problem-terms} is zero.
  To interpret this expression, note that $CW^0(L,L(n))$ is a $CW^0(L(n),L(n))$--$CW^0(L,L)$ bimodule, and we are comparing the right action of $\Phi^0_{2,L}$ to the left action of $\Phi^0_{2,L(n)}$. Now we use the observation that $L$ and $L(n)$ are isomorphic in the wrapped Fukaya category, and in fact any pure generator (intersection point or chord) $x_0 \in CW^0(L,L(n))$ furnishes an isomorphism. Pick such an $x_0$. We obtain isomorphisms $\mu^2(-,x_0) : CW^0(L(n),L(n)) \to CW^0(L,L(n))$ and $\mu^2(x_0,-) : CW^0(L,L) \to CW^0(L,L(n))$, and by composition of the latter with the inverse of the former, an isomorphism $CW^0(L,L) \to CW^0(L(n),L(n))$. We claim this isomorphism coincides with $(\tau^n)_*$. Both maps are ring isomorphisms that preserves the relative $H_1(\C^*; \Z)$-grading, so one must only check that both isomorphisms map the generator $z_L$ to the generator $z_{L(n)}$ (each representing a chord that winds once around the cylinder). This is obvious for $(\tau^n)_*$, and for the other map it follows from a direct computation of the triangle products $\mu^2(x_0,z_L)$ and $\mu^2(z_{L(n)},x_0)$, which are equal. Thus, for $y \in CW^0(L,L)$, we have
  \begin{equation}
    \mu^2((\tau^n)_*y, x) = \mu^2(x,y)
  \end{equation}
  Applying this with $y = \Phi^0_{2,L}(a,b)$, and applying Lemma \ref{lem:same-phi}, we see that \eqref{eq:problem-terms} is zero.
\end{proof}

The preceding proposition justifies our choices of $c_L = 0$ and $c_{L(n)} = 0$, as with these choices, we do indeed obtain a representation $\rho = (\Phi^1_1)_{L,L(n)}$ of $SC^1(\C^*)$ on $CW^0(L,L(n))$. It remains to determine what representation of $SC^1(\C^*)$ or of $\fraksl_2$ is obtained this way. It is challenging to compute all of the moduli spaces involved in this action, but we can determine some of the structure geometrically. 

Before continuing, it will be useful to consider a more general situation into
which our pair $(L,L(n))$ falls. Suppose that $L_0$ and $L_1$ are exact
Lagrangian submanifolds in a Liouville domain $U$, possibly non-compact, which
are Lagrangian isotopic, although we do not require that the isotopy be
compactly supported. The pair $L$ have this property, since $L(n)$ is obtained
by wrapping $L$ at both ends. Note that this construction is not related to the
map $\tau^n$ considered above. The following proposition describes the
continuation element associated to an isotopy connecting $L_0$ to $L_1$, and
also a ``higher-order'' continuation element describing how the $SH^1(U)$
action changes.

\begin{proposition}
\label{prop:continuation}
  The isotopic Lagrangians $L_0$ and $L_1$ are isomorphic in the (non-equivariant) wrapped Fukaya category of $U$. Associated to an isotopy $\calL = \{L_t\}_{t\in [0,1]}$ taking $L_0$ to $L_1$, there is an element 
\begin{equation}
\kappa_{0,\calL} \in CW^0(L_0,L_1)
\end{equation} such that $\mu^2(\kappa_{0,\calL},-)$ induces an isomorphism 
\begin{equation}
  K_{\calL} : HW^*(L_0,L_0) \to HW^*(L_0,L_1)
\end{equation}
 There is a map 
\begin{equation}
\kappa_{1,\calL} : SC^1(U) \to CW^0(L_0,L_1)
\end{equation}
such that if we further assume that the Lagrangian $L_0$ satisfies $(\Phi^0_1)_{L_0} = 0$,
 \begin{equation}
   \label{eq:kappa-1}
  K_{\calL}\circ (\Phi^1_1)_{L_0,L_0}(\xi) - (\Phi^1_1)_{L_0,L_1}(\xi)\circ K_{\calL} + \mu^2(\kappa_{1,\calL}(\xi),-) = \text{terms involving $\mu^1$ or $d$}
\end{equation}
If we further assume that $L_0$ is connected and simply connected (implying $L_1$ is as well), so that the wrapped Floer groups carry relative $H_1(U;\Z)$ gradings, then the element $\kappa_{0,\calL}$ is homogeneous with respect to the relative $H_1(U;\Z)$ grading, and the maps $K_{\calL}$ and $\kappa_{1,\calL}$ are homogeneous with respect to the relative $H_1(U;\Z)$ gradings.
\end{proposition}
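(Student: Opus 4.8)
The plan is to construct all of the continuation data by counting disks that carry a \emph{moving Lagrangian boundary label} sweeping along the isotopy $\calL$, and to extract the relation \eqref{eq:kappa-1} from the codimension-one boundary of a one-dimensional moduli space of such disks that additionally carries one framed interior input. This fuses the moving-boundary technique behind isotopy invariance of wrapped Floer cohomology (\cite{wrapped}, see also [\cite{thebook}, Section 9]) with the framed-interior-puncture technique of the closed--open map (\cite{seidelsol}, and [\cite{seidelsphere}, Sections 4--5]); throughout I would fix a consistent choice of perturbation data extending the fixed $\mathcal{P}$, exactly as in the discussion preceding Proposition \ref{deg1}. After the standard preliminary normalizations (making $\calL$ exact and conical at infinity), one obtains the continuation element $\kappa_{0,\calL} \in CW^0(L_0,L_1)$ by counting rigid finite-energy disks with a single boundary output puncture whose boundary arc is labelled $L_0$ near one lip of the puncture, $L_1$ near the other, and interpolates through $\{L_t\}$ in between, with the usual linear Hamiltonian and contact-type almost complex structure at infinity. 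Output strip-breaking is the only codimension-one degeneration of this moduli space, so $\mu^1\kappa_{0,\calL}=0$. Running the isotopy backwards produces $\kappa_{0,\bar{\calL}} \in CW^0(L_1,L_0)$, and the usual gluing argument --- $\kappa_{0,\bar{\calL}\ast\calL}$ is chain homotopic to $\mu^2(\kappa_{0,\bar{\calL}},\kappa_{0,\calL})$, while $\bar{\calL}\ast\calL$ is homotopic rel endpoints to the constant isotopy, whose continuation element is the unit $1_{L_0}$ --- shows that $\mu^2(\kappa_{0,\calL},-)$ and $\mu^2(\kappa_{0,\bar{\calL}},-)$ are mutually inverse on cohomology. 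This gives the isomorphism $L_0\simeq L_1$ in $\mathcal{W}(U)$ together with the asserted properties of $\kappa_{0,\calL}$ and of $K_\calL = \mu^2(\kappa_{0,\calL},-)$.

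Next I would define $\kappa_{1,\calL}\colon SC^1(U) \to CW^0(L_0,L_1)$ as the count of rigid disks with one framed interior input, labelled by a cochain representing $\xi$, and one boundary output puncture, the boundary arc again interpolating along $\calL$; the extra one-dimensional domain modulus (the position of the interior point) accounts for the degree drop relative to $\Phi^0_1$, so $\kappa_{1,\calL}(\xi)$ lands in $CW^0(L_0,L_1)$. To prove \eqref{eq:kappa-1} I would then analyze the one-dimensional moduli space $\mathcal{R}(\xi)$ of disks with one framed interior input (labelled $\xi$), one boundary input $x$, one boundary output, and the moving boundary label supported on a thin collar whose position along the boundary is allowed to vary. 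Its codimension-one boundary consists of: (a) the collar sliding into the boundary input, which pinches off a copy of $K_\calL$ (the $\kappa_{0,\calL}$-disk with an extra input inserted on its $L_0$-labelled segment) and leaves the standard $(\Phi^1_1)_{L_0,L_1}$-disk, contributing $(\Phi^1_1)_{L_0,L_1}(\xi)\circ K_\calL$; (b) the collar sliding into the boundary output, which pinches off $K_\calL$ and leaves the standard $(\Phi^1_1)_{L_0,L_0}$-disk, contributing $K_\calL\circ(\Phi^1_1)_{L_0,L_0}(\xi)$; (c) the interior input colliding with a constant portion of the boundary arc, producing $(\Phi^0_1)_{L_0}(\xi)=0$ on the $L_0$-portion and, on the $L_1$-portion, the term $(\Phi^0_1)_{L_1}(\xi)$, which is a coboundary by naturality of the closed--open map under $K_\calL$ --- indeed zero in our applications, where $CW^1(L_1,L_1)=0$ --- and hence is absorbed into the $\mu^1$/$d$ terms; (d) the interior input colliding with the collar, so the bubble absorbs the interpolation and is the $\kappa_{1,\calL}(\xi)$-disk, contributing $\mu^2(\kappa_{1,\calL}(\xi),-)$; and (e) strip- and cylinder-breaking, giving the terms involving $\mu^1$ and $d$. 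Setting the signed boundary count to zero and rearranging yields \eqref{eq:kappa-1}; the signs are pinned down as in [\cite{seidelsol}, Section 8], the argument running parallel to the proofs of Propositions \ref{deg1}, \ref{second} and \ref{multip}.

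For the homogeneity statement, assume $L_0$ --- hence each $L_t$ and $L_1$ --- connected and simply connected, so that the wrapped complexes carry relative $H_1(U;\Z)$-gradings and $\Phi^1_1$ is homogeneous for them, as in the propositions of Section \ref{sec:observations}. Every moduli space above is cut out by maps of punctured Riemann surfaces with prescribed asymptotics, so nonemptiness forces a homology relation among the asymptotic chords and the asymptotic orbit; thus $\kappa_{0,\calL}$ is supported in the single relative-grading component determined by the isotopy $\calL$, and $\mu^2$ then shows $K_\calL$ shifts the relative grading by that fixed class, i.e.\ is homogeneous. The same asymptotics argument applied to the defining moduli space of $\kappa_{1,\calL}$ shows $\kappa_{1,\calL}$ is homogeneous, of relative degree that of $\kappa_{0,\calL}$ twisted by the $SC^1(U)$-grading of the input. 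The main obstacle is not this bookkeeping but the analytic foundation of the middle step: choosing perturbation data for the moving-boundary disks, the framed-interior-puncture disks and the sliding-collar family consistently with one another and with the fixed data for $SC^*(U)$ and $\mathcal{W}(U)$, proving the needed transversality, and checking that the Gromov--Floer compactification of $\mathcal{R}(\xi)$ is the compact one-manifold with exactly the strata (a)--(e) --- in particular, that no further codimension-one bubbling (the collar splitting, or a disk bubbling off the moving arc away from the interior puncture) occurs. This can be handled in the spirit of [\cite{seidelsphere}, Section 5] and [\cite{thebook}, Section 9], using the real blow-up of the Kimura--Stasheff--Voronov compactification \cite{KSV} to accommodate the framing at the interior puncture.
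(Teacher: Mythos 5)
Your proposal is correct and follows essentially the same route as the paper: $\kappa_{0,\calL}$ and $K_{\calL}$ are the standard moving-boundary continuation data, $\kappa_{1,\calL}$ is defined by disks with one interior input and a moving boundary condition (the extra one-dimensional domain family giving the degree drop), the relation \eqref{eq:kappa-1} is read off from the codimension-one boundary of the parametrized moduli space with the hypothesis $(\Phi^0_1)_{L_0}=0$ discarding the collision with the constant boundary, and homogeneity follows from the topology of the curves (the paper makes this precise via the reverse isotopy and simple connectivity of $L_0$). The only real difference is cosmetic: your collar bookkeeping produces a separate $(\Phi^0_1)_{L_1}$ stratum that the paper's organization folds into the $\mu^2(\kappa_{1,\calL}(\xi),-)$ term, and your handling of it is legitimate since $(\Phi^0_1)_{L_0}=0$ together with invertibility of $[\kappa_{0,\calL}]$ and Proposition \ref{deg1} forces $[(\Phi^0_1)_{L_1}(\xi)]=0$.
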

\begin{proof}
  The element $\kappa_{0,\calL}$ and the map $K_{\calL}$ are the standard continuation element and map respectively. The element $\kappa_{0,\calL}$ is defined by counting disks with a moving boundary condition determined by the isotopy $\calL$. It in fact determines isomorphisms
  \begin{equation}
    \mu^2(\kappa_{0,\calL},-) : HW^*(L,L_0) \to HW^*(L,L_1)
  \end{equation}
  for any $L$, and $K_{\calL}$ is the case $L = L_0$. The map $K_{\calL}$ is also determined by counting strips with $L_0$ on one side, and a moving boundary condition determined by $\calL$ on the other. 

Because it is defined by counting maps of a disk to $U$, the element
$\kappa_{0,\calL}$ is homogeneous with respect to the relative grading. Indeed,
any chord from $L_0$ to $L_1$ that appears in $\kappa_{0,\calL}$ is homotopic
via the holomorphic disk to a path $\gamma : [0,1]\to U$ such that $\gamma(t)
\in L_t$. Post-composing these paths with the reverse isotopy yields a path
$\bar{\gamma} : [0,1]\to U$ such that $\bar{\gamma}(t) \in L_0$ for all $t$.
Via this process, the grading difference between two chords contributing to
$\kappa_{0,\calL}$ is measured by a loop in $L_0$. Since $L_0$ is assumed
simply connected the difference must be zero.

The proof of homogeneity the map $K_{\calL}$ is similar. We use the fact that the isotopy $\calL$ determines a bijection between homotopy classes of paths from $L_0$ to $L_0$ and homotopy classes of paths from $L_0$ to $L_1$. Each strip contributing to $K_{\calL}$ then witnesses that the input and output have gradings that are related by that bijection.

The element $\kappa_{1,\calL} : SC^1(U) \to CW^0(L_0,L_1)$ is defined by counting disks with one input and one boundary puncture, and with a moving boundary condition determined by $\calL$. The presence of the moving boundary condition changes the number of degrees of freedom in the domain, and we consider a one-dimensional parameter space where the interior puncture is allowed to move along a horizontal line in the domain. This gives $\kappa_{1,\calL}$ degree $-1$ as desired. The interior puncture is asymptotic to an orbit $\xi \in SC^1(U)$. To prove the relation \eqref{eq:kappa-1}, we consider a one-dimensional moduli space of strips with an interior puncture, one boundary condition constant on $L_0$, and the other determined by $\calL$. The space of domains has dimension two in this case, parametrized by the position of the interior puncture. The degenerations where the interior puncture collides with one of the boundary punctures gives the terms involving $K_{\calL}$. The degeneration where the interior puncture approaches the boundary with the moving boundary condition yields the term $\mu^2(\kappa_{1,\calL}(\xi),-)$. When the interior puncture approaches the boundary with the constant $L_0$ condition, we obtain a degeneration combining $\Phi^0_1 \in CW^1(L_0,L_0)$ with a disk that resembles $\mu^2$ but has a moving boundary condition on one edge. By hypothesis, we can discard these terms, and all other boundary components either the differential $\mu^1$ on $CW^*(L_0,L_0)$ or $CW^*(L_0,L_1)$ or the differential $d$ on $SC^*(U)$.

The last assertion is that $\kappa_{1,\calL}$ is homogeneous with respect to the absolute grading on $SC^1(U)$ and the relative grading on $CW^0(L_0,L_1)$. This means we regard both gradings as relative, and claim that $\kappa_{1,\calL}$ preserves grading differences. Again, this follows from the topology of the surfaces used in the definition. 
\end{proof}

\begin{remark}
An important observation connecting the previous proposition to the general theory is that the element $\kappa_{1,\calL}$ can be identified with the cocycle measuring the difference between the action of $SC^1(U)$ on $CW^0(L_0,L_0)$ and the action on $CW^0(L_0,L_1)$.
\end{remark}

We now consider the action of the Cartan subalgebra $H^1(\C^*) \subset SC^1(\C^*)$, spanned by the generator  $\xi_0$.
 
In the case of $L$ and $L(n)$ in $\C^*$, each component of $CW^0(L,L(n))$ with respect to the relative $H_1(\C^*;\Z)$-grading has rank one. As $\xi_0$ is represented by a contractible loop, its action preserves the $H_1(\C^*;\Z)$-grading, and so acts diagonally on the basis of chords. The following proposition gives information about the eigenvalues of $\Phi^1_1(\xi_0)$. This is a special case of a result due to Nick Sheridan, although our proof is particular to our special case.
\begin{proposition}
  Let $\lambda(x)$ denote the eigenvalue of $\Phi^1_1(\xi_0)$ on the generator $x \in CW^0(L,L(n))$. Consider two generators $x_1,x_2$, and denote by $[x_1]-[x_2] \in H_1(\C^*;\Z)$ the relative grading difference between them. We have
  \begin{equation}
    \lambda(x_1) - \lambda(x_2) = \langle \xi_0,[x_1]-[x_2]\rangle
  \end{equation}
  where the right hand side denotes the pairing between $\xi_0$, thought of as an element of $H^1(\C^*) \subset SH^1(\C^*)$, with a class in $H_1(\C^*;\Z)$.\footnote{Dangerous bend: the class of $\xi_0$ in $H^1(\C^*)$ is not zero even though $\xi_0$ is represented in symplectic cohomology by a contractible loop.}
\end{proposition}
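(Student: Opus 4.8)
The plan is to set up a one-parameter moduli space interpolating between the operations that compute $\Phi^1_1(\xi_0)$ on the two generators $x_1$ and $x_2$, and to read off the eigenvalue difference from the asymptotic orbit $\xi_0$ that appears at the interior puncture. Since $\xi_0$ is represented by a contractible loop, the map $\Phi^1_1(\xi_0)$ preserves the relative $H_1(\C^*;\Z)$-grading; together with assumption \ref{assump:hw-l-ln} (vanishing differential) and the rank-one decomposition of $CW^0(L,L(n))$ into relative grading pieces, this forces $\Phi^1_1(\xi_0)$ to act diagonally on the basis of intersection and chord generators, so that $\lambda(x)$ is well-defined. So the content is purely in the difference $\lambda(x_1)-\lambda(x_2)$.

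First I would reduce to a local computation near the interior puncture. The key point is that $\xi_0 = \theta = z\partial_z$ is, up to homotopy in $SC^1(\C^*)$, represented by the image of the standard generator of $H^1(\C^*)$ under $H^1(\C^*)\to SH^1(\C^*)$; concretely $\xi_0$ can be represented by a cocycle supported near the core circle and asymptotic to a small Reeb orbit that winds once around. The eigenvalue $\lambda(x)$ is computed by a count of disks with one interior input labelled $\xi_0$, one boundary input $x$, one boundary output, and tangent direction pointing at the output. To compare $\lambda(x_1)$ and $\lambda(x_2)$, choose a path in $CW^0(L,L(n))$ — equivalently, choose a chord of the Hamiltonian flow joining the chords underlying $x_1$ and $x_2$, or rather a homotopy of boundary conditions — and build a one-dimensional parametrized moduli space whose two ends break off the $\xi_0$-disk at $x_1$ and at $x_2$ respectively. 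The difference of the two boundary contributions is then governed by the flux of $\xi_0$ through the cylinder swept out between the two chords, which is exactly $\langle \xi_0, [x_1]-[x_2]\rangle$.

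More precisely, here is the cleanest route, which avoids new moduli spaces: use that $\xi_0 \in H^1(\C^*) \subset SH^1(\C^*)$ lies in the image of ordinary cohomology, where the closed–open map is understood. The action of a class $c \in H^1(M)$ on $HW^*(K,L)$ through $\Phi^1_1$ is the "$c$-weighting" operator: on a generator $x$ corresponding to a chord $\gamma:[0,1]\to M$ from $K$ to $L$, it acts by $\langle c, \gamma \rangle$ where $\gamma$ is completed to a closed loop using paths in $K$ and $L$ — this is precisely how local systems twist wrapped Floer cohomology, and it is compatible with $\mathcal{CO}$ restricted to $H^1$ (see the discussion of $\Phi^1_1$ being homogeneous for the relative grading in Section \ref{sec:observations}). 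Hence $\lambda(x) = \langle \xi_0, [x]\rangle$ modulo a global constant depending only on the chosen trivializations of the local system along $L$ and $L(n)$; that constant cancels in the difference, giving $\lambda(x_1)-\lambda(x_2) = \langle \xi_0, [x_1]-[x_2]\rangle$.

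The main obstacle is establishing that identification of $\Phi^1_1(\xi_0)$ with the weighting operator rigorously in the wrapped setting at chain level, rather than just cohomologically: one must check that the count of $\xi_0$-disks really does reduce to a linking/flux number, which requires knowing that the only contributions come from constant-in-the-interior configurations (a maximum-principle / energy argument using that $\xi_0$ is represented by a very small orbit near the core). Alternatively, if one prefers to stay with the direct geometric argument, the obstacle is controlling the ends of the interpolating one-dimensional moduli space: one must verify that no disk bubbling or orbit breaking occurs beyond the two expected ends and the terms proportional to $\mu^1$ or $d$, which vanish here by assumptions \ref{assump:sh} and \ref{assump:hw-l-ln}. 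In the case at hand, where $CW^0(L,L(n))$ is supported in a single degree and the relevant domains are intervals, this is manageable and the sign bookkeeping follows the conventions of [\cite{seidelsol}, Section 8].
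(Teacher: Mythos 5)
Your proposal has a genuine gap at its central step. Both of your routes ultimately rest on the assertion that $\Phi^1_1$ applied to an $H^1$-class acts on $CW^0(L,L(n))$ as the ``weighting operator'' $x \mapsto \langle \xi_0,[x]\rangle\, x$ up to a global constant --- but that assertion is essentially the statement to be proved (it is the general result the paper attributes to Sheridan and deliberately does not invoke), and the justification you sketch for it does not hold up. Your localization heuristic starts from the claim that $\xi_0$ ``can be represented by a cocycle supported near the core circle and asymptotic to a small Reeb orbit that winds once around'': this is exactly the confusion the footnote warns against. The orbit underlying $\xi_0$ is \emph{contractible} (it sits in the winding-zero, Morse--Bott component coming from $H^*(\C^*)$); its nonzero class in $H^1(\C^*)$ arises only through the identification of the constant-orbit part of $SC^*$ with ordinary cohomology, not through any winding of the asymptotic orbit. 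Consequently there is no flux/linking number that the disk counts for $\Phi^1_1(\xi_0)$ obviously reduce to, and indeed the absolute eigenvalues are \emph{not} topological --- they depend on the perturbation data, which is why only differences can be pinned down. Your first, ``direct'' route is also not well-defined as stated: $x_1$ and $x_2$ are distinct Hamiltonian chords, i.e.\ distinct asymptotic conditions, and one cannot build a single one-dimensional parametrized moduli problem whose two ends break off the $\xi_0$-disk at $x_1$ and at $x_2$ respectively; ``a homotopy of boundary conditions joining the chords'' is not a Floer-theoretic datum.

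The workable version of your interpolation idea is to interpolate not between two generators but between the two \emph{pairs} $(L,L)$ and $(L,L(n))$, and this is what the paper does. One already knows the action on $CW^0(L,L)$ explicitly from Proposition \ref{prop:cstar-liemap} ($\Phi^1_1(\xi_0)z_L^m = m z_L^m$, where the relative grading difference of $z_L^{m_1}$ and $z_L^{m_2}$ pairs with $\xi_0$ to give $m_1-m_2$). Proposition \ref{prop:continuation}, applied to a (non-compactly supported) Lagrangian isotopy $\calL$ from $L$ to $L(n)$, supplies the continuation element $\kappa_{0,\calL}$, the isomorphism $K_{\calL}$, and the higher continuation element $\kappa_{1,\calL}$ satisfying \eqref{eq:kappa-1}; since $\xi_0$ is asymptotic to a contractible orbit and the graded components of $CW^0(L,L(n))$ have rank one, $\kappa_{1,\calL}(\xi_0)=\epsilon\,\kappa_{0,\calL}$, so plugging $z_L^m$ into \eqref{eq:kappa-1} shows $(\Phi^1_1)_{L,L(n)}(\xi_0)$ has eigenvalue $m+\epsilon$ on $K_{\calL}(z_L^m)$. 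The undetermined constant $\epsilon$ is exactly your ``global constant,'' and it cancels in differences, while $K_{\calL}$ preserves relative gradings. If you want to salvage your write-up, replace the weighting-operator claim and the generator-to-generator interpolation by this continuation-element argument (or else prove the weighting statement honestly, which is a substantially harder chain-level task than the maximum-principle remark suggests).
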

\begin{proof}
  First we observe that the statement is true for the representation $CW^0(L,L)$ considered previously. The generator $x = z^n$ satisfies $\lambda(z^n) = n$, and also the relative grading of $z^{n_1}$ and $z^{n_2}$ is $(n_1 - n_2)[S^1] \in H_1(\C^*;\Z)$. Since $\langle[\xi_0],[S^1]\rangle = 1$, both sides reduce to $n_1 - n_2$. Also observe that, for $CW^0(L,L)$, the relative grading by $H_1(\C^*;\Z)$ may be enhanced to an absolute grading, by declaring the identity element $1$ to have grading zero. This absolute grading corresponds to the homology classes of Reeb chords starting and ending on $L$.

Now we use the continuation elements defined in Proposition \ref{prop:continuation}. Let $\calL$ be an isotopy between $L_0 = L$ and $L_1 = L(n)$, and $\kappa_{0,\calL} \in CW^0(L,L(n))$ and $\kappa_{1,\calL} : SC^1(\C^*) \to CW^0(L,L(n))$ be the continuation elements. Since $\xi_0$ is a contractible loop, we find that $\kappa_{1,\calL}(\xi_0)$ and $\kappa_{0,\calL}$ lie in the same graded component of $CW^0(L,L(n))$. Since the graded components are one-dimensional, we have a proportionality
\begin{equation}
  \kappa_{1,\calL}(\xi_0) = \epsilon \kappa_{0,\calL}
\end{equation}
for some $\epsilon \in \K$.

Now equation \eqref{eq:kappa-1} reads 
\begin{equation}
  K_{\calL}\circ (\Phi^1_1)_{L_0,L_0}(\xi) - (\Phi^1_1)_{L_0,L_1}(\xi)\circ K_{\calL} + \mu^2(\epsilon \kappa_{0,\calL},-) = 0
\end{equation}
Plug in the element $z^n \in CW^0(L,L)$ into the equation to obtain
\begin{equation}
  nK(z^n) - (\Phi^1_1)_{L,L(n)}(\xi_0)(K(z^n)) + \epsilon K(z^n) = 0
\end{equation}
This shows that $(\Phi^1_1)_{L,L(n)}(\xi_0)$ has eigenvalue $n + \epsilon$ on $K(z^n)$. 

To conclude, the difference between the eigenvalues of $(\Phi^1_1)_{L,L(n)}(\xi_0)$ on $K(z^{n_1})$ and $K(z^{n_2})$ is once again $n_1 - n_2$. Because the map $K$ preserves the relative $H_1(\C^*;\Z)$ grading, the expression $\langle \xi_0, [K(z^{n_1})] - [K(z^{n_2})] \rangle$ also equals $n_1 - n_2$.
\end{proof}
\begin{corollary}
  The weight spaces of $CW^0(L,L(n))$, under the action of $\Phi^1_1(\xi_0)$, are one-dimensional.
\end{corollary}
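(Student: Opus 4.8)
The plan is to deduce the corollary formally from the preceding proposition, using two structural inputs that are already in place: that every $H_1(\C^*;\Z)$-graded piece of $CW^0(L,L(n))$ has rank one, and that $\Phi^1_1(\xi_0)$ preserves the relative $H_1(\C^*;\Z)$-grading. The latter holds because $\xi_0 = z^0\theta$ lies in the zero component of the $H_1(\C^*;\Z)$-grading on $SC^1(\C^*)$, so the homogeneity of $\Phi^1_1$ established in Section \ref{sec:observations} applies. Consequently $\Phi^1_1(\xi_0)$ restricts to each rank-one graded piece $CW^0(L,L(n))_\beta$ and acts there by a scalar $\lambda_\beta$; in particular it is diagonalizable, with the chord generators as eigenvectors.

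First I would rephrase the weight decomposition in terms of the grading: since $\Phi^1_1(\xi_0)$ is compatible with the $H_1(\C^*;\Z)$-grading, each of its eigenspaces is the direct sum of its intersections with the graded pieces, and each such intersection is either $0$ or the whole rank-one piece $CW^0(L,L(n))_\beta$. So it suffices to show that the scalars $\lambda_\beta$ are pairwise distinct, i.e. that $\beta \mapsto \lambda_\beta$ is injective; then the $\mu$-eigenspace is exactly the unique graded piece on which $\Phi^1_1(\xi_0)$ acts by $\mu$ (whenever $\mu$ is attained), hence one-dimensional.

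Next I would invoke the preceding proposition: for generators $x_1,x_2$ of $CW^0(L,L(n))$ one has $\lambda(x_1) - \lambda(x_2) = \langle \xi_0, [x_1]-[x_2]\rangle$, the pairing of $\xi_0 \in H^1(\C^*) \subset SH^1(\C^*)$ with the relative grading difference in $H_1(\C^*;\Z)$. Since $H_1(\C^*;\Z)$ is the free abelian group on the core circle $[S^1]$ and $\langle \xi_0,[S^1]\rangle = 1$, the map $\langle \xi_0,-\rangle\colon H_1(\C^*;\Z)\to\K$ is injective; here one uses that $\K$ has characteristic zero, as assumed throughout this section. Therefore two generators with the same $\xi_0$-weight lie in the same graded piece and are proportional, which gives the claim.

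Since the argument is formal, there is essentially no obstacle; the only point that deserves a check is that the preceding proposition be available for \emph{all} generators of $CW^0(L,L(n))$ — both intersection generators and proper chord generators. This is indeed so: its proof produces the required relation from the continuation elements $\kappa_{0,\calL}$ and $\kappa_{1,\calL}$ of Proposition \ref{prop:continuation} together with the eigenvalue computation on $CW^0(L,L)$, none of which distinguishes the two kinds of generators. Thus the corollary follows at once.
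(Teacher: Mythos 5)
Your argument is correct and is exactly the route the paper intends: the corollary is meant to follow immediately from the eigenvalue-difference proposition together with the rank-one statement for the relative $H_1(\C^*;\Z)$-graded pieces and the fact that $\langle \xi_0,[S^1]\rangle = 1$ separates the eigenvalues in characteristic zero. You have simply made explicit the routine steps (grading-preservation of $\Phi^1_1(\xi_0)$, injectivity of the pairing, applicability to both intersection and proper chord generators) that the paper leaves implicit.
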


Next, we consider the generator $\xi_1 \in SC^1(\C^*)$. Geometrically, this generator lies in one end of $\C^*$. This end also contains some of the chords contributing to $CW^0(L,L(n))$. Let $v_+ \in CW^*(L,L(n))$ denote the intersection generator closest to this end, so that all other generators further into the end are proper chord generators. Symmetrically, considering the action $\xi_{-1}$ on the opposite end of $\C^*$, we find an intersection generator $v_-$ that is closest to that end.

\begin{proposition}
  \label{prop:highest-weight}
  We have $\Phi^1_1(\xi_1)(v_+) = 0$, and $\Phi^1_1(\xi_{-1})(v_-) = 0$.
\end{proposition}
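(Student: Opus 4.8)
The plan is to prove both vanishing statements by an action (energy) argument, using the fact that $\xi_1$ is supported in one end of $\C^*$ and $v_+$ is the intersection generator closest to that same end. Recall that $\Phi^1_1(\xi_1)$ is defined by counting index-zero holomorphic disks with two boundary punctures (one input, labelled by a generator of $CW^*(L,L(n))$, one output) and one interior puncture asymptotic to the periodic orbit representing $\xi_1$, with the interior tangent direction pointing toward the output. First I would identify the homology class constraint: since $\xi_1$ has grading $e$ in $H_1(\C^*;\Z)$ (it winds once around the cylinder, into the positive end), the relative-grading homogeneity of $\Phi^1_1$ (the last Proposition of Section \ref{sec:observations}, applied to the simply connected Lagrangians $L$ and $L(n)$) forces $\Phi^1_1(\xi_1)(v_+)$ to lie in the graded component $[v_+] + e$. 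I would then argue that this component contains only proper chord generators lying strictly further into the positive end than $v_+$ — this is exactly the choice that defines $v_+$ as ``the intersection generator closest to this end.''

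The heart of the argument is then to show that the coefficient of each such proper chord generator $y$ in $\Phi^1_1(\xi_1)(v_+)$ is zero, by an action/topological-energy estimate. The relevant moduli space consists of disks $u$ with boundary on $L \cup L(n)$, with an interior negative (or positive, depending on convention) puncture asymptotic to the Reeb orbit underlying $\xi_1$, input asymptotic $v_+$, output asymptotic $y$. The key geometric input is that on the positive end of $\C^*$, the Lagrangians $L$ and $L(n)$ coincide (this was emphasized in the construction: ``on the ends of $\C^*$, $L$ and $L(n)$ coincide''), and the wrapping Hamiltonian is essentially zero on the interior and grows on the end. I would compare the topological energy (the $\int u^*\omega$ term, which by exactness/Stokes reduces to differences of primitives of $\lambda$ and action values at the asymptotics) of any such $u$ against the winding: a disk whose boundary starts at the short chord $v_+$ and ends at a proper chord $y$ that wraps once more around the cylinder, while absorbing the orbit $\xi_1$ of winding number one, must have negative (hence impossible) topological energy, because $v_+$ sits at the action-minimal position among generators in the positive end while $y$ is strictly longer and the orbit contributes ``the wrong way.'' Equivalently, one can run the maximum principle / no-escape argument: any such curve would be forced to exit through the positive end, but the contact-type condition on $J$ there precludes this. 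Either way, the moduli space is empty, so $\Phi^1_1(\xi_1)(v_+)=0$. The statement $\Phi^1_1(\xi_{-1})(v_-)=0$ follows by the mirror-symmetric argument, exchanging the two ends of $\C^*$ and replacing $e$ by $-e$.

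The main obstacle I anticipate is making the action estimate genuinely rigorous rather than heuristic: one must carefully book-keep the contributions to $\int u^* \omega$ from the moving boundary condition (since $L(n) = \tau^n L$ is not a small perturbation of $L$ in the interior, although it is in the ends), from the perturbation one-form associated to the wrapping Hamiltonian, and from the interior puncture's asymptotic action, and then verify that their sum is strictly negative for every combinatorial type of output generator $y$ in the component $[v_+]+e$. A clean way to organize this is to work on the end of $\C^*$ alone, where $L$ and $L(n)$ literally agree and the Hamiltonian and $J$ are of the standard cylindrical form, so that the relevant disks, if any existed, would be honest holomorphic curves in the symplectization $[1,\infty)\times S^1$ with Lagrangian boundary on a single cotangent fiber and an interior asymptotic; the maximum principle then applies directly to the $r$-coordinate to rule them out, which is probably the most economical route. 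Care with orientations and signs is needed for the ``toward the output'' tangent-direction convention, but that only affects how the statement is phrased, not the emptiness of the moduli space.
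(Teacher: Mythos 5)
Your proposal rests on the claim that the moduli space computing the coefficient of the proper chord $v_{+,1}$ in $\Phi^1_1(\xi_1)(v_+)$ is \emph{empty}, established by an action estimate or a maximum principle in the end. This is where the gap is, and I think the step fails as stated. First, the maximum principle cannot exclude these curves from the end: their asymptotics (the orbit underlying $\xi_1$ and the output chord, which is created by the wrapping Hamiltonian on the end) sit \emph{in} the end, so the contact-type condition only yields a priori $r$-bounds above the asymptotics, not nonexistence; moreover the curves solve a Hamiltonian-perturbed equation, not the unperturbed holomorphic curve equation, so the clean ``$r$-coordinate maximum principle'' you invoke in your last paragraph does not apply. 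Second, and decisively, your energy heuristic proves too much: it uses only that the input is shorter than the output chord and that the orbit winds once, but nothing that distinguishes the intersection generator $v_+ = v_{+,0}$ from the proper chord generators $v_{+,k}$, $k \geq 1$. The same combinatorial type of curve (input a chord, interior input $\xi_1$, output the chord winding once more, everything in the end) is counted with coefficient $k$ in $\Phi^1_1(\xi_1)(v_{+,k}) = k\,v_{+,k+1}$, which is nonzero for $k\geq 1$; any energy or no-escape argument that kills all such configurations would contradict this. So the vanishing at $v_+$ cannot be a consequence of crude emptiness of the moduli space for geometric reasons; it is the vanishing of an algebraic count.

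For comparison, the paper does not attempt an emptiness argument. It proves a more general localization statement: after replacing $L$ by a small push-off $\tilde L$ and stretching the neck, all curves contributing to $(\Phi^1_1)_{\tilde L, L(n)}(\xi_j)(v_{+,k})$ are confined to the end (the possible broken pieces in the middle are excluded topologically, since Reeb orbits and chords there are non-contractible and the Lagrangians are simply connected), where the configuration of Lagrangians is literally the same as for the pair $(\tilde L, L)$. The counts therefore coincide with the known action on $CW^0(L,L)$, namely the action of vector fields on functions, giving $(\Phi^1_1)(\xi_j)(v_{+,k}) = k\, v_{+,k+j}$; the proposition is the case $k=0$. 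Your grading observation (the output must lie in the component $[v_+]+e$) is correct and consistent with this, but to repair your argument you would need to replace the energy/maximum-principle step with something that sees the multiplicity $k$ --- in effect, a comparison with the known $(L,L)$ computation, which is exactly the route the paper takes.
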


These statements are consequences of the following more general proposition. To set this up, we note that on the ends, $L$ and $L(n)$ coincide, and so if we restrict our attention to generators on a single end, there is a bijection between chords from $L$ to $L(n)$ and chords from $L$ to $L$ at that end. 

In what follows, we will actually use a perturbed copy of $L$. This $\tilde{L}$
is obtained by pushing $L$ off of itself by a small amount in the direction of
the Hamiltonian flow, as shown at the top of figure \ref{fig:neck}. Thus $\tilde{L}$ and
$L$ intersect in one point, which has degree zero as a morphism from
$\tilde{L}$ to $L$. Because $\tilde{L}$ is a small push off of $L$ by the
wrapping Hamiltonian itself, the Floer complex $CW^*(\tilde{L},L)$ is naturally
identified with $CW^*(L,L)$. The chords are essentially the same, they are just
slightly shorter in $CW^*(\tilde{L},L)$. The continuation element $\kappa_0 \in
CW^0(\tilde{L},L)$ from Proposition \ref{prop:continuation}  is given by the unique intersection
point. This identification is also compatible with the closed-open string maps
such as $\Phi^1_1$, so the $SC^1(\C^*)$ action on $CW^0(\tilde{L},L)$
corresponds to the action $CW^*(L,L)$, and both are the action of vector fields
on functions in the mirror interpretation.

The continuation element $\kappa_0$ also relates $CW^*(L,L(n))$ to
$CW^*(\tilde{L},L(n))$, and the action of $SC^1(\C^*)$ on $CW^*(L,L(n))$
corresponds to the action on $CW^*(\tilde{L},L(n))$.

Having done the perturbation, we will also modify the Hamiltonian slightly, so
that there is a region in the middle of the $\C^*$ where no wrapping occurs.
This region should contain all of the intersection points between $\tilde{L}$
and $L(n)$.

Comparing the two pairs $(\tilde{L},L)$ and $(\tilde{L},L(n))$, we see that the
ends of these pictures resemble each other. Thus there are some correspondences
for certain chords for the pair $(\tilde{L},L)$ with certain chords for the
pair $(\tilde{L},L(n))$. To spell this out, we denote by $\zeta_k$ the chord
that winds $k$ times around the cylinder, where $\zeta_0$ is the intersection
point, and $\zeta_k$ lies in the right-hand end of the figure. For the pair
$(\tilde{L},L(n))$, we have the intersection point $v_+$ that is right-most in
the figure, and further to the right of it we find chords that we denote by
$v_{+,k}$, for $k > 0$, which wind $k$ times around the cylinder, and write
$v_{+,0} = v_+$ We can set up a bijection between these two sets of generators
by mapping $\zeta_k$ to $v_{+,k}$ for $k \geq 0$. We call this bijection $\flat_+$.
In symmetrical fashion, we have generators $v_{-} = v_{-,0}$ and $v_{-,k}$ for
$k \leq 0$ of $CW^0(\tilde{L},L(n))$ in the other end of $\C^*$, and we can set
up a bijection between these and the chords $\zeta_k$ for $k \leq 0$. We call
this bijection $\flat_-$.\footnote{These bijections should not be regarded as parts
of a single correspondence between generators of $CW^0(\tilde{L},L)$ and
$CW^0(\tilde{L},L(n))$.}

\begin{figure}[!h]
\label{fig:neck} \centering
\includegraphics[scale=2.2]{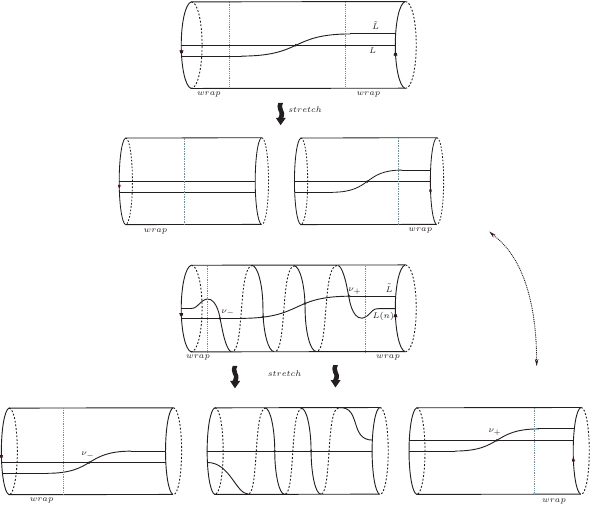}  \caption{Stretching the neck to compute the action.} 
\label{fig:} \end{figure}

\begin{proposition}
  Suppose $n > 0$.
  For $j > 0, k\geq 0$, the action of $(\Phi^1_1)_{\tilde{L},L(n)}(\xi_j)$ on the generators $v_{+,k}$ corresponds to the action of $(\Phi^1_1)_{\tilde{L},L}(\xi_j)$ on the generators $\zeta_k$. That is
  \begin{equation}
    (\Phi^1_1)_{\tilde{L},L(n)}(\xi_j)(v_{+,k}) = \flat_+((\Phi^1_1)_{\tilde{L},L}(\xi_j)(\zeta_k)) = k\cdot v_{+,k+j} 
  \end{equation}
  Symmetrically, for $j < 0, k \leq 0$, the action of $(\Phi^1_1)_{\tilde{L},L(n)}(\xi_j)$ on $v_{-,k}$ corresponds to the action of $(\Phi^1_1)_{L,L}(\xi_j)$ on $v_{-,k}$:
  \begin{equation}
    (\Phi^1_1)_{\tilde{L},L(n)}(\xi_j)(v_{-,k}) = \flat_-((\Phi^1_1)_{\tilde{L},L}(\xi_j)(\zeta_k)) = k\cdot v_{-,k+j} 
  \end{equation}
\end{proposition}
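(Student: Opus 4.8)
The plan is to prove the statement by a neck-stretching (SFT-type) degeneration argument, comparing the moduli spaces defining $(\Phi^1_1)_{\tilde L, L(n)}(\xi_j)$ near one end of $\C^*$ with those defining $(\Phi^1_1)_{\tilde L, L}(\xi_j)$. The key geometric input is that $L$, $L(n)$, and $\xi_j$ (for $j>0$) all live entirely in the right-hand end of $\C^*$: the Lagrangians $L$ and $L(n)$ literally coincide there, and the orbit representing $\xi_j$ is supported on that end. So I would first set up the degeneration: insert a long neck $[-R,R]\times S^1$ separating the right-hand end (containing $\xi_j$, the chords $\zeta_k$ and $v_{+,k}$, and the relevant portions of all Lagrangians) from the rest of $\C^*$. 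As $R\to\infty$, any finite-energy disk contributing to $(\Phi^1_1)_{\tilde L, L(n)}(\xi_j)(v_{+,k})$ breaks into a curve in the right-hand piece (with the interior puncture asymptotic to $\xi_j$) together with possible curves in the complementary piece.

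The second step is to show that the complementary piece contributes nothing nontrivial: since $L$ and $L(n)$ agree on the end, and since we have arranged (via the modification of the Hamiltonian described just before the statement, creating a no-wrapping region in the middle) that there is no wrapping where the two Lagrangians start to differ, the only curves that can appear in the complementary region are trivial strips; in particular the intersection generator $v_{+,k}$ (for $k\geq 0$) sees exactly the same local picture near the right end as the chord $\zeta_k$ does for the pair $(\tilde L, L)$. This is precisely the content of the bijection $\flat_+$. Therefore the broken configurations computing $(\Phi^1_1)_{\tilde L,L(n)}(\xi_j)(v_{+,k})$ are in bijection with those computing $(\Phi^1_1)_{\tilde L,L}(\xi_j)(\zeta_k)$, and with matching signs (the orientation data is transported by $\flat_+$, which is induced by an identification of the two ends as subsets of $\C^*$). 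This gives the first displayed equality $(\Phi^1_1)_{\tilde L,L(n)}(\xi_j)(v_{+,k}) = \flat_+\bigl((\Phi^1_1)_{\tilde L,L}(\xi_j)(\zeta_k)\bigr)$.

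The third step is to evaluate $(\Phi^1_1)_{\tilde L,L}(\xi_j)(\zeta_k)$. Under the identification $CW^*(\tilde L,L)\cong CW^*(L,L)$ (Proposition \ref{prop:continuation} and the discussion preceding the statement), the chord $\zeta_k$ corresponds to the generator $z_L^k$, and $(\Phi^1_1)_{L,L}$ is the natural action of vector fields on functions on the mirror $\Gm$, computed in Proposition \ref{prop:cstar-liemap}: $\Phi^1_1(\xi_j)(z_L^k) = \Phi^1_1(z^j\theta)(z_L^k)$, which by the formula there equals $[z^j\theta, z^k] = k\, z^{j+k}$, i.e. $k\cdot \zeta_{k+j}$. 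Applying $\flat_+$ yields $k\cdot v_{+,k+j}$, completing the first line. The symmetric statement for $j<0$, $k\leq 0$, $v_{-,k}$ follows by the obvious reflection of $\C^*$ exchanging the two ends (which sends $\xi_j\mapsto \xi_{-j}$, $\zeta_k\mapsto\zeta_{-k}$), reducing it to the case already proved.

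\textbf{Main obstacle.} The delicate point is not the bijection of generators but the \emph{compactness/gluing} bookkeeping in the neck-stretching: one must verify that no energy escapes into the neck in a way that produces extra broken components (e.g. additional closed orbits appearing at the neck, or disk components with boundary on $L$ vs.\ $L(n)$ where they differ), and that the index/dimension count rules out all such configurations in the relevant (zero-dimensional, after quotient) moduli spaces. This is controlled precisely by choosing the wrapping Hamiltonian to vanish in the middle region and to be supported so that all intersection points between $\tilde L$ and $L(n)$ lie in that region, as arranged above; with that choice the action filtration forces the complementary curves to be constant. Matching orientation lines across $\flat_+$ is then routine given the explicit geometric identification of the two ends. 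I would also remark that the hypothesis $n>0$ enters only to guarantee (via assumption \ref{assump:hw-l-ln}) that $CW^*(L,L(n))$ is concentrated in degree zero, so that ``terms involving $\mu^1$ or $d$'' in the relevant structure equations vanish and the chain-level identities pass directly to the stated equalities.
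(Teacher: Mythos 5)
Your overall strategy is the same as the paper's: a neck-stretching degeneration that confines the relevant curves to one end of $\f{C}^*$, an identification of the local Lagrangian configurations for $(\tilde{L},L(n))$ and $(\tilde{L},L)$ in that end via $\flat_+$, and then evaluation of $(\Phi^1_1)_{\tilde{L},L}(\xi_j)(\zeta_k)=k\,\zeta_{k+j}$ through the identification $CW^*(\tilde{L},L)\cong CW^*(L,L)$ and Proposition \ref{prop:cstar-liemap}. However, the step you yourself flag as the main obstacle is not actually resolved by what you write. You assert that because the wrapping Hamiltonian vanishes in the middle region, ``the action filtration forces the complementary curves to be constant.'' That is not the mechanism, and an action/energy argument alone does not obviously exclude broken pieces in the middle in the exact wrapped setting. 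The argument the paper uses is topological: in the stretched limit, any component left in the complementary piece would be a holomorphic plane asymptotic to a Reeb orbit of the neck, or a half-plane with boundary on $L$, $L(n)$ or $\tilde{L}$ asymptotic to a Reeb chord; neither can exist because those orbits and chords wind around the cylinder, hence are non-contractible in the middle piece, while the Lagrangians are simply connected. This is the content that makes the confinement (and hence the identification of the two moduli counts) legitimate, and without it the first displayed equality is unproved. Note also that the possibility of splitting off the end containing $v_+$ at all uses $n>0$ in a geometric way, not merely through assumption \ref{assump:hw-l-ln}.

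Second, your reduction of the case $j<0$, $k\leq 0$ by the reflection of $\f{C}^*$ exchanging the two ends has a sign error. That automorphism reverses the orientation of the core circle, so it sends $\xi_j$ to $-\xi_{-j}$, not to $\xi_{-j}$. With the transformation as you state it, the reduction yields $(\Phi^1_1)_{\tilde{L},L(n)}(\xi_j)(v_{-,k}) = -k\, v_{-,k+j}$, contradicting the claimed formula; keeping track of the extra minus sign restores $k\, v_{-,k+j}$. Alternatively, you can avoid the automorphism entirely, as the paper notes, by repeating the neck-stretching argument with the roles of the two ends exchanged (splitting off the end containing $v_-$ and the chords $\zeta_k$, $k\leq 0$), which proves the second displayed equation directly.
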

\begin{proof}
We first consider the statement for $j > 0$, $k \geq 0$.  The key point to be justified is that the pseudo-holomorphic curves contributing to $(\Phi^1_1)_{\tilde{L},L(n)}(\xi_j)$ are contained in the end where $\xi_j$, $v_{+,k}$ and $v_{+,k+j}$ lie, while the curves contributing to $(\Phi^1_1)_{\tilde{L},L}(\xi_j)$ are entirely contained in the end where $\xi_j$, $\zeta_k$, $\zeta_{k+j}$ lie. Once this is shown, we know that the counts of maps must match because the configuration of Lagrangians in these regions is geometrically the same. 

To establish this, we will apply a neck-stretching deformation of the problem, and show that in the limit, the curves are entirely contained in the end. This implies that there is some finite deformation having this property.

The neck-stretching deformation we use for the pair $(\tilde{L},L(n))$ involves stretching off the ends of the manifold, including the generators $v_+$ and $v_-$ in their respective ends. This splits the target $\C^*$ into three copies of $\C^*$, an end $U_+$ containing $v_+$, an end $U_-$ containing $v_-$ and the interior $U_0$ containing all the other intersection points, as shown in the bottom half of figure \ref{fig:neck}. (This is only possible if $n > 0$.) Of course, the neck-stretching process may cause parts of the pseudo-holomorphic curves contributing to $(\Phi^1_1)_{\tilde{L},L}(\xi_j)(v_{+,k})$ to break off and remain in the interior. The sort of components that may appear in the interior are either holomorphic planes asymptotic to Reeb orbits, or holomorphic half-planes with boundary on one of the Lagrangians $L$ or $L(n)$ that are asymptotic to Reeb chords. Both types of curves do not exist in $U_0$ for topological reasons: the former because the Reeb orbits are not contractible, the latter because the Lagrangians are simply connected and the Reeb chords are not contractible. Thus there is a finite point in the neck-stretching process where all holomorphic curves are contained in the relevant end.

The deformation for the pair $(\tilde{L},L)$ is asymmetrical. We stretch along one circle splitting the cylinder into two parts $W_0$ and $W_+$, so that the generators $\zeta_k$ for $k \geq 0$ all end up in $W_+$ in the limit, while the chords $\zeta_k$ for $k < 0$ end up in $W_0$, as shown in the top half of figure \ref{fig:neck}. Once again, the limit configurations of curves contributing to $(\Phi^1_1)_{\tilde{L},L}(\xi_j)(\zeta_k)$ for $k \geq 0$. can have no components in $W_0$ for topological reasons. And thus there is a point in the neck-stretching process where all curves are contained in the relevant end.

Finally, comparing the pictures in $U_+$ and $W_+$, we find that they are the same, showing that the action of $\xi_j$ on the $v_{+,k}$ generators is identified with the action on the $\zeta_k$ generators, for $k \geq 0$.

In order to analyze the case where $j < 0, k \leq 0$, we could apply a different deformation to the pair $(\tilde{L},L)$, splitting the cylinder into $W'_-$ and $W'_0$, so that the generators $\zeta_k$ for $k \leq 0$ end up in $W'_-$. The rest of the argument follows \emph{mutatis mutandis}. Alternatively, we can use the automorphism of the cylinder that switches the two ends and maps $\tilde{L}$, $L$, and $L(n)$ to themselves. This brings us back into the previous case. This gives the desired result, once we realize that this automorphism actually maps $\xi_j$ to $-\xi_{-j}$, due to the fact that the definition of $\xi_j$ requires an orientation on the core $S^1 \subset \C^*$, which is reversed by the automorphism. 

\end{proof}

It remains to tie these propositions together. Let $V(n) \subset CW^0(L,L(n))$
denote the space spanned by the intersection generators, and consider the
action of $\fraksl_2 = \langle \xi_{-1},\xi_0,\xi_1\rangle $ on this space. We
regard $\xi_0$ as the grading operator, $\xi_1$ as the raising operator, and
$\xi_{-1}$ as the lowering operator. Proposition \ref{prop:highest-weight} says
that $v_+$ is a highest-weight vector for $\fraksl_2$, implying that the span
of all the weight spaces below this one is stable under $\fraksl_2$.
Symmetrically, Proposition \ref{prop:highest-weight} says that $v_-$ is a
lowest-weight vector for $\fraksl_2$, and we conclude that the span of the
weight spaces between $v_+$ and $v_-$ is stable under $\fraksl_2$. This span
is precisely $V(n)$. Furthermore, because all the weight spaces are one
dimensional, and the total dimension is $n+1$, we conclude that the
representation is irreducible. Thus we have proved the following.

\begin{theorem}
  The subspace $V(n) \subset CW^0(L,L(n))$ spanned by the intersection generators is stable under the action of $\fraksl_2$ via $(\Phi^1_1)_{L,L(n)}$. It is an irreducible representation of $\fraksl_2$ isomorphic to the representation in homogeneous polynomials of degree $n$ in two variables.
\end{theorem}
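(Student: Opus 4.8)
The plan is to assemble the propositions proved above into a standard highest-weight argument for $\fraksl_2$, with all of the analytic input already in hand. First I would record that, since $(L,L(n))$ is an equivariant pair (Definition \ref{def:equivariant-pair}), the map $(\Phi^1_1)_{L,L(n)}$ is an honest Lie algebra homomorphism $SC^1(\C^*)\to\End(CW^0(L,L(n)))$, so restricting along the inclusion $\fraksl_2=\langle\xi_{-1},\xi_0,\xi_1\rangle\hookrightarrow SC^1(\C^*)$ makes $CW^0(L,L(n))$ into an $\fraksl_2$-module. From $[\xi_0,\xi_{\pm 1}]=\pm\xi_{\pm 1}$ it follows that $\Phi^1_1(\xi_1)$ raises the $\Phi^1_1(\xi_0)$-eigenvalue by $1$ and $\Phi^1_1(\xi_{-1})$ lowers it by $1$; by the corollary following the eigenvalue-difference proposition every $\Phi^1_1(\xi_0)$-weight space of $CW^0(L,L(n))$ is one-dimensional, and the rank-one relative $H_1(\C^*;\Z)$-grading indexes the generators by $\Z$, with $\Phi^1_1(\xi_0)$-weights running through a single coset of $\Z$.

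Next I would locate $V(n)$ inside this picture. The proper chord generators are exactly the $v_{+,k}$ with $k>0$ and the $v_{-,k}$ with $k<0$, and by the eigenvalue-difference formula their $\Phi^1_1(\xi_0)$-weights are $\lambda(v_+)+k$ and $\lambda(v_-)+k$, hence strictly above $\lambda(v_+)$ resp. strictly below $\lambda(v_-)$. Therefore $V(n)$, the span of the intersection generators, is precisely the span of those $\Phi^1_1(\xi_0)$-weight spaces whose weight lies in $[\lambda(v_-),\lambda(v_+)]$; since each such space is one-dimensional and there are $n+1$ intersection generators (for instance $4$ for $L(3)$, see Figure \ref{deform}), these weights form a string of $n+1$ consecutive values, with $v_+$ the top and $v_-$ the bottom intersection generator.

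I would then verify $\fraksl_2$-stability. Let $S_+$ be the span of all weight spaces of weight at most $\lambda(v_+)$: it is preserved by $\Phi^1_1(\xi_0)$ and $\Phi^1_1(\xi_{-1})$ for obvious reasons, and by $\Phi^1_1(\xi_1)$ because $\xi_1$ annihilates the top line $\K v_+$ (Proposition \ref{prop:highest-weight}) and carries every lower weight space into one still of weight at most $\lambda(v_+)$. Symmetrically the span $S_-$ of all weight spaces of weight at least $\lambda(v_-)$ is $\fraksl_2$-stable, using $\Phi^1_1(\xi_{-1})(v_-)=0$. Hence $V(n)=S_+\cap S_-$ is an $\fraksl_2$-submodule, finite-dimensional of dimension $n+1$, with one-dimensional weight spaces, and all weights of the Cartan generator $h=2\xi_0$ lie in a single residue class modulo $2$. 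Over the characteristic-zero field $\K$ this module is completely reducible, and a direct sum of two or more nonzero irreducible $\fraksl_2$-modules all of whose $h$-weights lie in one residue class modulo $2$ necessarily has a weight space of dimension at least $2$; so $V(n)$ is irreducible. An irreducible $\fraksl_2$-module of dimension $n+1$ has highest weight $n$, so $V(n)$ is the $(n+1)$-dimensional irreducible representation, i.e. the space of degree-$n$ homogeneous polynomials in two variables, with $v_+$ a highest-weight vector and $v_-$ a lowest-weight vector.

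The only genuine obstacle is combinatorial rather than conceptual: one must be certain that ``intersection generator'' singles out exactly the central band of weight spaces --- that $v_+$ and $v_-$ are the extreme-weight intersection generators, that the proper chords sit strictly outside this band, and that the count is $n+1$ --- and this rests on the geometry of the Dehn twist $\tau^n$ together with Proposition \ref{prop:highest-weight}, which is itself the substantive (neck-stretching) input. Granting that, the remainder is the $\fraksl_2$ highest-weight formalism recalled above.
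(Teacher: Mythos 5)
Your argument is correct and is essentially the paper's own proof: it ties together Proposition \ref{prop:highest-weight}, the eigenvalue-difference result and its corollary to exhibit $v_+$ and $v_-$ as highest- and lowest-weight vectors, shows the band of weight spaces between them (which is $V(n)$) is $\fraksl_2$-stable, and concludes irreducibility from the one-dimensional weight spaces and dimension $n+1$. Your only addition is spelling out the irreducibility step via complete reducibility and the parity-of-weights argument, which the paper leaves implicit.
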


\bibliographystyle{amsplain}
\bibliography{equiv}

\end{document}